\documentclass{article}
\usepackage{authblk}
\usepackage{amssymb,amsfonts,units,nicefrac,amsthm,stmaryrd,multicol,cmll}

\DeclareFontFamily{U}{MnSymbolC}{}
\DeclareSymbolFont{MnSyC}{U}{MnSymbolC}{m}{n}
\DeclareMathSymbol{\boxdot}{\mathbin}{MnSyC}{"76}
\DeclareMathSymbol{\diamonddot}{\mathbin}{MnSyC}{"7E}
\DeclareFontShape{U}{MnSymbolC}{m}{n}{
    <-6>  MnSymbolC5
   <6-7>  MnSymbolC6
   <7-8>  MnSymbolC7
   <8-9>  MnSymbolC8
   <9-10> MnSymbolC9
  <10-12> MnSymbolC10
  <12->   MnSymbolC12}{}

\usepackage{subcaption} 

\usepackage{xcolor,bm,tikz,stackrel}
\usepackage[inline]{enumitem}

\newcommand{\lbl}[1]{\mathsf L_{#1}}

\DeclareFontFamily{U}{mathb}{\hyphenchar\font45}
\DeclareFontShape{U}{mathb}{m}{n}{
      <5> <6> <7> <8> <9> <10> gen * mathb
      <10.95> mathb10 <12> <14.4> <17.28> <20.74> <24.88> mathb12
      }{}
\DeclareSymbolFont{mathb}{U}{mathb}{m}{n}
\DeclareFontSubstitution{U}{mathb}{m}{n}

\DeclareMathSymbol{\sqsubset}{3}{mathb}{"80}
\DeclareMathSymbol{\sqsupset}{3}{mathb}{"81}

\DeclareMathSymbol{\sqSubset}       {3}{mathb}{"94}
\DeclareMathSymbol{\sqSupset}       {3}{mathb}{"95}

\DeclareMathSymbol{\sqsubseteq}{3}{mathb}{"84}
\DeclareMathSymbol{\sqsupseteq}{3}{mathb}{"85}

\DeclareMathSymbol{\sqsubsetneq}{3}{mathb}{"88}
\DeclareMathSymbol{\sqsupsetneq}{3}{mathb}{"89}

\usepackage[all]{xy}
\usepackage{tikz-cd}
\usepackage{xcolor}
\definecolor{darkolivegreen}{rgb}{0.33, 0.42, 0.18}
\definecolor{darkblue}{rgb}{0.0, 0.0, 0.55}
\definecolor{falured}{rgb}{0.5, 0.09, 0.09}	
\definecolor{tan}{rgb}{0.82, 0.71, 0.55}
\definecolor{turquoise}{rgb}{0.19, 0.84, 0.78}
\definecolor{lightgray}{rgb}{0.83, 0.83, 0.83}
\definecolor{babyblue}{rgb}{0.54, 0.81, 0.94}
\definecolor{applegreen}{rgb}{0.55, 0.71, 0.0}
\definecolor{amber}{rgb}{1.0, 0.75, 0.0}
\definecolor{atomictangerine}{rgb}{1.0, 0.6, 0.4}

\usepackage[inline]{enumitem}
\usepackage{tasks}
\usetikzlibrary{arrows,fit}
\usepackage{rotating}
\usetikzlibrary{trees,decorations.pathmorphing}
\usetikzlibrary{arrows.meta,shapes,positioning,calc,automata,quotes,patterns}
\usepackage{mathtools}
\usepackage{csquotes}
\usepackage{centernot}

\newlength{\eparindent}

\newlist{Cases}{enumerate}{9}
\setlist[Cases,1]{label={\sc Case {\rm \arabic*}},wide, labelwidth=!, labelindent=0pt,listparindent=\eparindent}
\setlist[Cases,2]{label*= {\rm .\arabic*},wide, labelwidth=!, labelindent=0pt,listparindent=\eparindent}
\setlist[Cases,3]{label*= {\rm .\arabic*},wide, labelwidth=!, labelindent=0pt,listparindent=\eparindent}
\setlist[Cases,4]{label*= {\rm .\arabic*},wide, labelwidth=!, labelindent=0pt,listparindent=\eparindent}
\setlist[Cases,5]{label*= {\rm .\arabic*},wide, labelwidth=!, labelindent=0pt,listparindent=\eparindent}
\setlist[Cases,6]{label*= {\rm .\arabic*},wide, labelwidth=!, labelindent=0pt,listparindent=\eparindent}
\setlist[Cases,7]{label*= {\rm .\arabic*},wide, labelwidth=!, labelindent=0pt,listparindent=\eparindent}
\setlist[Cases,8]{label*= {\rm .\arabic*},wide, labelwidth=!, labelindent=0pt,listparindent=\eparindent}
\setlist[Cases,9]{label*= {\rm .\arabic*},wide, labelwidth=!, labelindent=0pt,listparindent=\eparindent}

\newcommand{\martin}[1]{}
\newcommand{\define}[1]{{\em #1}}
\newcommand{\term}[1]{{#1}}
\newcommand{\dom}[1]{|#1|}
\newcommand{\altdefine}[1]{}
\newcommand{\altterm}[1]{}
\newcommand{\isf}{\mathsf{IS4}}

\newcommand{\ikf}{\mathsf{IK4}}

\newcommand{\csf}{\mathsf{CS4}}

\usepackage{hyperref}
\hypersetup{
    colorlinks=true,
    linkcolor=blue,
    citecolor=blue,
    urlcolor=blue
}

\usepackage{doi}

\usepackage[numbers]{natbib}
\usepackage{url}  

\def\simu{\mathrel{\vcenter{\offinterlineskip \vskip 0ex\hbox{\rotatebox{30}{$\shortrightarrow$}} \vskip -.9ex\hbox{\hskip -.2ex\rotatebox{-30}{$\shortrightarrow$}}}}}

\newcommand{\notnec}{{\overline\nec}}
\newcommand{\notnecd}{{\overline\boxdot}}

\def\llsim{\mathrel{\vcenter{\offinterlineskip
\hbox{$\ll$}\vskip.1ex\hbox{\hskip.25ex $\sim$}}}}
\def\ggsim{\mathrel{\vcenter{\offinterlineskip
\hbox{$\gg$}\vskip.1ex\hbox{\hskip.25ex $\sim$}}}}

\def\lleq{\mathrel{\vcenter{\offinterlineskip\hbox{$\ll$} \vskip -1.4ex\hbox{\hskip0.25ex$\underline{\phantom{<}}$}}}}
\def\ggeq{\mathrel{\vcenter{\offinterlineskip\hbox{$\gg$} \vskip -1.4ex\hbox{\hskip0.25ex$\underline{\phantom{<}}$}}}}

\def\lb{\left\llbracket}
\def\rb{\right\rrbracket}
\def\<{\left (}

\def\>{\right )}
\def\({\left (}
\def\){\right )}

\newcommand{\simrel}{\mathrel E}

\newcommand{\om}{\omega}

\newcommand{\al}{\alpha}

\newcommand{\be}{\beta}

\newcommand{\ga}{\gamma}

\newcommand{\de}{\delta}

\newcommand{\fA}{{\cl A}}
\newcommand{\fB}{{\cl B}}
\newcommand{\fC}{{\cl C}}
\newcommand{\fD}{{\cl D}}

\newcommand{\peq}{\preccurlyeq}
\newcommand{\sbs}{\sqsubset}
\newcommand{\sbeq}{\sqsubseteq}
\newcommand{\sps}{\sqsupset}

\newcommand{\diam}{\lozenge}
\newcommand{\val}[1]{\lb #1 \rb}

\newcommand{\ignore}[1]{}
\newcommand{\peqT}{\peq_{\Sigma}}

\newcommand{\subT}{\subseteq_{\Sigma}}
\newcommand{\sqsT}{\sqsubset_{\Sigma}}

\newcommand{\type}[1]{\mathsf T_{ #1 }}

\newcommand{\lanfull}{{\mathbb L}}

\newcommand{\landi}{\cl L_\diam}

\newcommand{\cl}{\mathcal}
\newcommand{\imp}{\mathop \to}
\newcommand{\seq}{\succcurlyeq}

\newcommand{\forrefs}[1]{}

\newcommand{\bfrm}[1]{#1^\nec}

\renewcommand\>{\right )}
\renewcommand\({\left (}
\renewcommand\){\right )}

\newcommand{\rel}{\sqsubset}
\newcommand{\ler}{\sqsupset}
\newcommand{\srel}{\sqsubsetneq}
\newcommand{\sler}{\sqsupsetneq}
\newcommand{\rrel}{\sqsubseteq}
\newcommand{\rler}{\sqsupseteq}

\newcommand{\nec}{\Box}
\newcommand{\ps}{\Diamond}

\makeatletter
\newcommand{\mlabel}[2]{#2\def\@currentlabel{#2}\label{#1}}
\makeatother

\DeclareSymbolFont{AMSb}{U}{msb}{m}{n}

\begin{document}

\newtheorem{theorem}{Theorem}[section]
\newtheorem{lemma}[theorem]{Lemma}
\newtheorem{proposition}[theorem]{Proposition}

\theoremstyle{definition}
\newtheorem{corollary}[theorem]{Corollary}
\newtheorem{definition}[theorem]{Definition}
\newtheorem{example}[theorem]{Example}
\newtheorem{remark}[theorem]{Remark}
\newtheorem{question}[theorem]{Question}

  \title{{Fine Selection for Intuitionistic Modal Logic}}
\author{David Fern\'andez-Duque}
\affil{Department of Philosophy, University of Barcelona\\{\tt fernandez-duque@ub.edu}}
\maketitle

\begin{abstract}
We extend Fine's selection method to the setting of intuitionistic modal logic and use it to provide a model-theoretic proof that Fischer Servi-style intuitionistic $\sf K4$ has the finite model property.
\end{abstract}


\maketitle
\section{Introduction}

Intuitionistic modal logics may be naturally interpreted over birelational structures, with a preorder $\peq$ being used to interpret the intuitionistic implication and a second relation $\rel$ being used to interpret the modalities.
The precise interaction between the two relations gives rise to various intuitionistic variants of modal logics, with their decidability or finite model property often leading to challenging problems, especially when the modal relation is transitive.
In this context, some of the more prominent variants that have been studied are the `constructive' $\csf$ of Alechina et al.~\cite{AlechinaMPR01}, and the `intuitionistic' $\isf$ of Fischer Servi~\cite{servi1977modal,servi1984axiomatizations}.

Whether each of these logics is decidable or enjoys the finite model property had been a longstanding open question, in the case of {$\csf$} since at least 2001 \cite{AlechinaMPR01}, and of {$\isf$} and $\sf IK4$, at least since 1994 \cite{Simpson94}.
Various solutions to these problems have arisen recently; the finite model property was recently established for \altterm{def:csf}{$\csf$}~\cite{balbiani2026} and \altterm{def:isf}{$\isf$}~\cite{GKMMS23} and Piazza~\cite{piazza} has proven that $\sf IK4$ is decidable.
The latter two are based on proof-theoretic methods; in this paper, we propose a model-theoretic approach based on Fine selection and establish the finite model property for $\sf IK4$.

From a combinatorial perspective, Kruskal's theorem is present in all known proofs of decidability for $\sf IK4$ and is likely to be required in some sense. 
In order to apply it in a model-theoretic framework, we combine Kruskal's theorem with Fine selection to ensure that the submodels we generate will never repeat during a model-search procedure.
This leads to a notion of Fine selection which operates not only on isolated formulas, but on finite, tree-like models more globally.
Besides the model-theoretic veneer, the combinatorics is quite different from other proofs and relies on transfinite induction.
In a companion paper~\cite{FDIGL}, we show how this version of Fine selection may also be applied to the intuistionistic G\"odel-L\"ob logic of Das et al.~\cite{DasIGL}.

\section{The logic $\sf IK4$}

$\sf IK4$ is a propositional modal logic with an intuitionistic base.
In order to define its formal language, we first fix a countably infinite set $\mathbb P$ of propositional variables.
Then the \define{intuitionistic modal language} $\lanfull$ is defined by the grammar (in Backus--Naur form)
\[\varphi,\psi \coloneqq  \   p \  | \   \bot  \ |  \ \left(\varphi\wedge\psi\right) \  |  \ \left(\varphi\vee\psi\right)  \ |  \ \left(\varphi\imp \psi\right)    \  | \  \ps\varphi \  |  \ \nec\varphi  , \]
where $p\in \mathbb P$.
We may also use abbreviations $\boxdot \varphi = \varphi\wedge\nec\varphi$ and $\diamonddot\varphi = \varphi\vee\ps\varphi$.

\begin{definition}
The logic $\ikf$ is the smallest logic containing all intuitionistic tautologies and closed under the following axioms and rules.
\smallskip

\noindent\fbox{\begin{minipage}{\textwidth}
\begin{tasks}[label={}, label-width=3em,label-align = {right}](2)
\task[\mlabel{ax:k:box}{\ensuremath{\bm{ \mathrm K_{\nec}}}}] $\nec (p \imp q) \imp(\nec p \imp \nec q)$
\task[\mlabel{ax:k:dia}{\ensuremath{\bm{\mathrm K_{\ps}}}}] $\nec (p \imp q) \imp( \ps p \imp \ps q)$
\task[\mlabel{rl:mp}{\ensuremath{\bm{\mathrm{MP}}}}] $\dfrac{\varphi \imp \psi \hspace{10pt} \varphi}{\psi}$
\task[\mlabel{ax:trans:dia}{\ensuremath{\bm{\mathrm 4_{\ps}}}}] $\ps \ps p \imp \ps p$
\task[\mlabel{ax:dp}{\ensuremath{\bm{\mathrm{DP}}}}] $\,\ps(p \vee q) \imp \ps p \vee \ps q$ 
\task[\mlabel{ax:fs}{\ensuremath{\bm{\mathrm{FS2}}}}] $\,(\ps p \imp \nec q ) \rightarrow \nec (p \imp q )$ 
\task[\mlabel{ax:cd}{\ensuremath{\bm{\mathrm{CD}}}}] $\,\nec(p \vee q) \imp \nec p \vee \ps q$ 
\task[\mlabel{ax:null}{\ensuremath{\bm{\mathrm{N}}}}] $\,\neg \ps \bot$ 
\task[\mlabel{rl:nec}{\ensuremath{\bm{\mathrm{Nec}}}}] $\dfrac{\varphi}{\nec \varphi}$
\task[\mlabel{ax:trans:box}{\ensuremath{\bm{\mathrm 4_{\nec}}}}] $\nec p \imp \nec \nec p$
\end{tasks}
\end{minipage}}
\end{definition}

We will often represent reasoning in a Gentzen style.
If $\Gamma$ is a set of formulas, a {\em conjunction from $\Gamma$} is a formula of the form $\bigwedge \Gamma'$ and a {\em disjunction from $\Gamma$} is a formula of the form $\bigvee \Gamma'$, where $\Gamma'\subseteq \Gamma$ is finite, with the convention that $\bigwedge \varnothing =\top$ and $\bigvee\varnothing = \bot$.
We use the standard Gentzen-style notation that defines $\Gamma \vdash_\Lambda \Delta$ to mean that there is a conjunction $\gamma$ from $\Gamma$ and a disjunction $\delta$ from $\delta$ such that $\vdash \gamma \imp \delta $.
We call $\vdash$ the \define{syntactic consequence} relation, and if $\Gamma\not\vdash\Delta$, we say that $\Gamma$ is {\em $\Delta$-consistent.}
	We omit mention of $\Delta$ if $\Delta = \varnothing$ (with the understanding that $\bigvee\varnothing = \bot$).

The semantics is based on birelational frames satisfying some combinatorial conditions.
As these conditions will pop up in various contexts, let us begin by studying them abstractly.

\begin{definition}
A {\em transitive frame} is a pair $\fA=(\dom{\fA},\sqsubset_\fA)$, where $\dom{\fA}$ is any set and $\sbs_\fA$ is a transitive relation on $\dom{\fA}$.
We write:
\begin{itemize}

\item $w\sps _\fA v$  if $v\sbs_\fA w$, $w\sbeq_\fA v $ if $w\sbs_\fA v $ or $w=v $,

\item $w\equiv_\fA   v $  if $w\sbeq_\fA v $ and $v\sbeq_\fA w $,

\item $w \srel_\fA  v $  if $w\sbeq_\fA v $ but $v \not \sbeq_\fA w $, and

\item $w\sbs_\fA^1  v $  if $v$ is an immediate $\sbs_\fA$-successor of $w$, in the sense that $w\srel_\fA v $ and there is no $u$ such that $w \srel_\fA  u \srel _\fA v$.

\end{itemize}
\end{definition}

We will often drop the subindex and write e.g.~$\sbs$ for $\sbs_\fA$.
Note that $\equiv$ is an equivalence relation, and the equivalence class of $w$ is denoted $[w]$.
Any set  $ C\subseteq [w] $ for some $w$ is a {\em $\sbs$-cluster,} and, if equality holds, $C$ is a {\em full cluster.}
Preorders, partial orders, etc.~are special cases of transitive frames and will inherit similar notational conventions, e.g.~$\prec$ is the strict part of $\peq$, which will usually denote a partial order or preorder, although bracket notations for clusters and intervals will exclusively refer to relations denoted $\rel$.

We overload the notation $\rel $ to set  $A\rel B$ to mean that for all $a\in A$, there exists $b\in B$ such that $a\rel b$.
If one of the two sets is a singleton $\{x\}$, we may instead write $x$.
The set of full $\rel$-clusters thus forms an antisymmetric transitive set under $\rel$.
Note that it is possible for a cluster to be irreflexive, but in this case, it must be a singleton.

Next, we define two confluence properties that will hold of the intuitionistic order $\peq$.
However, we will also consider other relations with these properties, so we define them more generally.

\begin{definition}
Let $\fA$ and $\fB$ be transitive frames.
A relation $R\subseteq \dom{\fA}\times\dom{\fB}$ is
\begin{enumerate}

\item {\em forward confluent} if whenever $y \sps_\fA x \mathrel R x'$, there is $y'$ such that $y  \mathrel R y' \sps_\fB x'$,

\item {\em backward confluent} if whenever $x \sbs_\fA y \mathrel R y'$, there is $x'$ such that $x  \mathrel R x' \sbs_\fB y'$, and

\item {\em expansive} if it is both forward and backward confluent.

\end{enumerate}
If $R$ is a total, expansive relation between $\dom{\fA}$ and $\dom{\fB}$, we write $R\colon \fA \simu \fB$.
An expansive relation which is a function is a {\em homomorphism.}
\end{definition}

When not clear from context, we may instead write e.g.~{\em expansive from $\sbs_\fA$ to $\sbs_\fB$,} or {\em expansive over $\sbs_\fA$} when $\fA=\fB$.

Expansive relations are the bisimulation-invariant analogue of a homomorphism, arguably making them the correct notion of `morphism' when studying modal properties on frames.
Because, unlike homomorphisms, they are multivalued, we have access to several operations on them that could  break functionality.

\begin{proposition}\label{propExpandingClos}
Let $\fA$, $\fB$, $\fC$ be transitive frames.

\begin{enumerate}

\item If $\mathfrak R$ is a family of relations $R\subseteq \dom{\fA}\times \dom{\cl B}$ and every $R\in \mathfrak R$ is expansive, then so is $\bigcup \mathfrak R$.

\item If $R \subseteq \dom{\fA}\times\dom{\fB}$ and $S\subseteq \dom{\fB}\times\dom{\fC}$ are both expansive, then $S\circ R \subseteq \dom{\fA}\times\dom{\fC} $ is expansive.

\item If $R \subseteq \dom{\fA}\times\dom{\fA}$ is expansive, then so are its transitive closure  $R^+$ and its transitive, reflexive closure $R^*$.

\end{enumerate}

\end{proposition}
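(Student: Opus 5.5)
Each of the three items is a direct check from the definitions, treating forward and backward confluence separately, since a relation is expansive exactly when it has both properties. For item 1, put $R=\bigcup\mathfrak R$. Suppose $y \sps_\fA x \mathrel R x'$. Then $x \mathrel{R_0} x'$ for some $R_0\in\mathfrak R$. Forward confluence of $R_0$ gives $y'$ with $y \mathrel{R_0} y' \sps_\fB x'$, and since $R_0\subseteq R$ this $y'$ witnesses forward confluence of $R$. Backward confluence is handled the same way: pick the member $R_0$ that contains the pair $(y,y')$ and use its backward confluence.

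For item 2, I would chase the two confluence squares one after the other. For forward confluence, assume $y \sps_\fA x \mathrel{(S\circ R)} x''$, so there is $x'$ with $x\mathrel R x' \mathrel S x''$.
\begin{itemize}
\item Forward confluence of $R$ gives $y'$ with $y\mathrel R y'\sps_\fB x'$.
\item Then $y'\sps_\fB x'\mathrel S x''$, so forward confluence of $S$ gives $y''$ with $y'\mathrel S y''\sps_\fC x''$.
\end{itemize}
Hence $y\mathrel{(S\circ R)} y''\sps_\fC x''$. Backward confluence is symmetric. From $x\sbs_\fA y\mathrel R y'\mathrel S y''$, backward confluence of $R$ yields $x'$ with $x\mathrel R x'\sbs_\fB y'$. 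Backward confluence of $S$ then yields $x''$ with $x'\mathrel S x''\sbs_\fC y''$.

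For item 3, I would write $R^+=\bigcup_{n\ge1}R^n$, where $R^n$ is the $n$-fold composite. Induction on $n$ using item 2 shows that each $R^n$ is expansive, and item 1 then gives that $R^+$ is expansive. For $R^*=R^+\cup\mathrm{id}_{\dom\fA}$, it remains to check that the identity relation is expansive. This is immediate: if $y\sps x=x'$, take $y'=y$, and backward confluence is equally trivial. One more application of item 1 finishes the proof.

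I do not expect any real obstacle. The one point to handle carefully is that confluence is stated with the strict-style relation $\sbs$, not with $\sbeq$. This causes no trouble, because every step above uses $\sbs$ consistently in both the hypothesis and the conclusion, so no reflexivity is ever needed.
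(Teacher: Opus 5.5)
Your proof is correct and follows the paper's argument. Unions are handled by picking the member of $\mathfrak R$ that contains the relevant pair, composition by chaining the two confluence squares, and the closures as unions of iterated composites; you also check explicitly that the identity is expansive for $R^*$, which the paper leaves implicit. The one difference is that you verify backward confluence directly, whereas the paper reduces it to forward confluence by a duality. Your direct check is the safer route, because that duality really concerns the converse frame relations rather than $R^{-1}$ as the paper states it.
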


\begin{proof}
It suffices to show that forward confluent relations are closed under the above three operations, since $R$ is backward confluent if and only if $R^{-1}$ is forward confluent and we can apply the claim to $R^{-1}$.
Moreover, $R^+$ and $R^*$ are unions of iterated compositions of $R$, so the third claim follows from the other two.

For the first claim, if $a\rel_\fA a'$ and $a\mathrel{\bigcup \mathfrak R} b$, then $a\mathrel R b$ for some $R\in\mathfrak R$.
Since $R$ is forward confluent, there is $b'\ler_\fB b$ such that $a' \mathrel R b'$, hence $a' \mathrel{\bigcup \mathfrak R} b'$, as required.

For the second claim, if $a\rel_\fA a'$ and $a \mathrel{S\circ R} c$, then there is $b$ so that $a\mathrel R b$ and $b\mathrel Sc$.
By forward confluence for $R$, there is $ b ' \ler_\fB  b$ such that $a'\mathrel R b'$, then by forward confluence for $S$, there is $ c ' \ler _\fC c$ such that $b'\mathrel R c'$.
We then have that $c\rel_\fC c'$ and $a' \mathrel{S\circ R} c'$, as required.
\end{proof}

In view of Proposition~\ref{propExpandingClos}, we can take joins of expansive relations.

\begin{lemma}\label{lemmExpJoin}
If $\cal F$ is a transitive frame and $\mathfrak R$ is a family of expansive relations on $\cal F$, then there is a least expansive preorder containing $\bigcup \mathfrak R$.
We denote this relation by $\bigcurlyvee \mathfrak R$.
\end{lemma}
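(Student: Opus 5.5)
The plan is to construct the relation explicitly rather than as an intersection. Intersections of expansive relations need not be expansive, so an abstract ``least such'' argument is not directly available. Set $R_0 = \bigcup\mathfrak R$ and take $\bigcurlyvee\mathfrak R := R_0^*$, the reflexive–transitive closure of $R_0$ on $\dom{\cl F}$. We then show that $R_0^*$ has the three required properties: it contains $\bigcup\mathfrak R$, it is an expansive preorder, and it is contained in every expansive preorder containing $\bigcup\mathfrak R$.

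\textbf{Containment and expansiveness.} Containment $\bigcup\mathfrak R\subseteq R_0^*$ is immediate from the definition of the closure. By Proposition~\ref{propExpandingClos}(1), the union $R_0$ is expansive, since each member of $\mathfrak R$ is. Then Proposition~\ref{propExpandingClos}(3) gives that $R_0^*$ is expansive. It is a preorder by construction, being reflexive and transitive.

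One point deserves a brief check: whether the identity relation on $\dom{\cl F}$ is itself expansive. It is, trivially: if $y\sps x = x'$, take $y'=y$, and symmetrically for backward confluence. This is in any case built into Proposition~\ref{propExpandingClos}(3), which asserts that $R^*$ is expansive. It also covers the degenerate case $\mathfrak R=\varnothing$, where $R_0=\varnothing$ (vacuously expansive) and $R_0^*$ is the identity.

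\textbf{Minimality.} Let $S$ be any expansive preorder with $\bigcup\mathfrak R\subseteq S$. Since $S$ is reflexive and transitive and contains $R_0$, it contains the least reflexive transitive relation containing $R_0$, namely $R_0^*$. Hence $R_0^*\subseteq S$, so $R_0^*$ is the least expansive preorder containing $\bigcup\mathfrak R$, and it is unique by antisymmetry of inclusion.

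There is no real obstacle here. The only subtlety is the one noted above: we must not define the join as the intersection of all expansive preorders containing $\bigcup\mathfrak R$, because expansiveness is not closed under intersection. The explicit closure construction avoids this issue entirely and reduces everything to Proposition~\ref{propExpandingClos}.
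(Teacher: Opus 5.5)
Your proposal is correct and is essentially the paper's argument. The paper gives no separate proof and simply says the lemma holds ``in view of Proposition~\ref{propExpandingClos}'', which is exactly your route: take the union, close it reflexively and transitively, get expansiveness from items 1 and 3 of that proposition, and get minimality because $R_0^*$ is the least preorder containing $R_0$. The paper later uses this same description of the join, writing $\peq_\fA \curlyvee \peq_\fB$ as a reflexive--transitive closure.
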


We write $R\curlyvee S$ instead of $\bigcurlyvee \{R,S\}$.



\begin{definition}\label{DefSem}
An  {\em expansive transitive frame} or {\em $\ikf$ frame} is a triple $\fA=(\dom{\fA}, \peq_{\fA}  ,\sqsubset_{\fA})$, where $\sbs_{\fA}$ is a transitive relation on $\dom{\fA}$ and $\peq_{\fA}$ is a partial order on $\dom{\fA}$ which is expansive over $\sbs_\fA$.
\end{definition}

When clear from context, we may omit subindexes and write e.g.~$\peq$ instead of $\peq_\fA$.
We define ${\sqSubset_\fA} = {\rel_\fA\circ \peq_\fA}$ and ${\llsim_\fA} = {\sqSubset_\fA \cup \peq_\fA}$.
If $w\llsim_\fA v$ and $v \llsim_\fA w $, we write $w\cong_\fA v$.
The following is easily checked using backward confluence (see e.g.~\cite{santiago2026}).

\begin{lemma}\label{lemLL}
If $\fA $ is an expansive transitive frame, then ${\sqSubset_\fA} $ is transitive and ${\llsim_\fA}$ is a preorder.
Moreover, ${\rel_\fA}\subseteq {\sqSubset_\fA}$.
\end{lemma}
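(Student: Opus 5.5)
We need to prove: $\sqSubset = {\rel\circ\peq}$ is transitive, $\llsim = \sqSubset\cup\peq$ is a preorder, and ${\rel}\subseteq{\sqSubset}$. Composition convention: $w \mathrel{\rel\circ\peq} v$ means there is $u$ with $w\peq u \rel v$ (apply $\peq$ first, then $\rel$). The plan is a direct diagram chase in which backward confluence of $\peq$ over $\rel$ is used to commute a $\rel$-step past a $\peq$-step.

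The inclusion ${\rel}\subseteq{\sqSubset}$ is immediate from reflexivity of $\peq$: if $w\rel v$ then $w\peq w\rel v$.

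For transitivity of $\sqSubset$, suppose $w\peq u\rel v$ and $v\peq u'\rel z$. The goal is to move the middle $\peq$-step to the front. We have $u\rel v\peq u'$. Read backward confluence with $R={\succcurlyeq}$ (i.e.\ $R=\peq^{-1}$); this is equivalent to forward confluence of $\peq$. Given $v\peq u'$ and $u\rel v$, it yields $u''$ with $u\peq u''\rel u'$. Which form of confluence applies must be checked carefully against the definition, since the paper's definitions fix $R=\peq$ and the direction matters; this bookkeeping is the only real obstacle. Granting it, $w\peq u\peq u''$ and $u''\rel u'\rel z$, so $w\peq u''\rel z$ by transitivity of $\peq$ and of $\rel$. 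Hence $w\sqSubset z$.

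For $\llsim$, reflexivity comes from $\peq$. For transitivity there are four cases. If both steps are $\peq$, use transitivity of $\peq$. If both are $\sqSubset$, use the transitivity just established. If the steps are $w\peq w'\sqSubset z$, absorb the first step into the $\peq$ part of $\sqSubset$, using transitivity of $\peq$. If the steps are $w\sqSubset v\peq z$, i.e.\ $w\peq u\rel v\peq z$, apply the same confluence as above to $u\rel v\peq z$. This gives $u''$ with $u\peq u''\rel z$, hence $w\sqSubset z$.
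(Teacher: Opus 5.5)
Your plan matches the paper's, which says only that the lemma is easily checked using backward confluence. You get ${\rel}\subseteq{\sqSubset}$ from reflexivity of $\peq$, commute one $\rel$-step past one $\peq$-step for transitivity of $\sqSubset$, and run a four-case check for $\llsim$ whose last case reuses that commutation.

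\textbf{The gap.} That commutation is the only nontrivial content of the lemma. You leave it ``granted'', and the property you attribute it to does not give it.
\begin{itemize}
\item Backward confluence with $R={\seq}$, instantiated at $u\rel v$, needs the hypothesis $u'\peq v$ rather than $v\peq u'$. It also returns some $u''\peq u$, so the direction is wrong at both ends.
\item Forward confluence of $\peq$ does not yield the step either. Take points $u,v,u',z$ with ${\rel}=\{(u,v),(u',z)\}$, and let $\peq$ be the identity together with $v\peq u'$. Then $\peq$ is forward confluent over $\rel$, and ${\seq}$ is backward confluent over $\rel$. Yet $u\sqSubset v$ (via $u\peq u\rel v$) and $v\sqSubset z$ (via $v\peq u'\rel z$), while $u\sqSubset z$ fails, since the only $\peq$-successor of $u$ is $u$ and $u\rel z$ is false.
\end{itemize}
So neither reading of your justification can carry the argument; the direction bookkeeping you flagged is exactly where it breaks.

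\textbf{The fix.} Use backward confluence of $\peq$ itself, with no inversion. The definition says that $x\rel y\mathrel R y'$ implies $x\mathrel R x'\rel y'$ for some $x'$. Taking $R={\peq}$, $x=u$, $y=v$ and $y'=u'$ gives exactly the required $u''$ with $u\peq u''\rel u'$. Taking $y'=z$ instead handles your last $\llsim$ case, $w\peq u\rel v\peq z$. Expansiveness includes backward confluence, so this step is available under the hypotheses. With that one line in place of the deferred ``bookkeeping'', your proof is complete.
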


Note that, contrary to common usage, double symbols such as $\sqSubset$ indicate a larger relation (as a set), rather than a stricter one.
Note also that expansive transitive frames may be seen as first order structures with one predicate $p(x)$ for each atom $p$ and two relations $\peq$, $\rel$.
In order to appeal to properties of first order logic, instead of defining semantics directly on the modal language, we will define them via the standard translation into first order logic.
The notational conventions we have introduced can be defined within the first order language, e.g.~$x\seq y := y\peq x$ and $x\sqSubset y := \exists z (x\peq z\wedge z\rel y)$, and we use other standard abbreviations, including writing $\exists v \seq  w \ \varphi$ instead of $\exists v (w\peq v\wedge \varphi)$.

\begin{definition}
Given a formula $\varphi\in\lanfull$, we define its {\em first order translation}, denoted  $\varphi(x)$, where $x$ is a first order variable, inductively on $\varphi$ as follows.
\begin{itemize}

\item If $p$ is a propositional atom, then $p(x)$ is an atomic predicate,

\item $\bot(x) = x\neq x$,

\item $(\varphi\wedge\psi)(x) = \varphi(x)\wedge\psi(x)$,

\item $(\varphi\vee\psi)(x) = \varphi(x)\vee\psi(x)$,

\item $(\varphi\to\psi)(x) = \forall y  \seq  x (  \neg \varphi(y) \vee \psi(y))$,

\item $\ps\varphi (x) = \exists y  \ler  x \  \varphi(y) $, and

\item $\nec \varphi (x) = \forall y  \sqSupset  x \   \varphi(y)  $.

\end{itemize}
\end{definition}

Below, if $\varphi(x)$ is a first order formula with one free variable and $w$ is an element of a model $\cl M$, we write $\cl M\models \varphi(w) $ to mean that $\varphi(x)$ is true on $\cl M$ under the assignment that maps $x$ to $w$.

\begin{definition}
An {\em intuitionistic transitive model} is a tuple
\[\cl M=(\dom{\cl M},\peq_\cl M,\rel_\cl M,\val\cdot_\cl M),\]
consisting of an intuitionistic transitive frame equipped with a {\em valuation} $\val\cdot_\cl M \colon \mathbb P \to 2^{\dom{\cl M}}$ which is {\em persistent,} in the sense that $v \seq w\in \val p_\cl M$ implies that $v\in \val p_\cl M$.

Given an intuitionistic transitive model $\mathcal M $ and $w\in \dom{\cl M}$, we extend $\val\cdot _\cl M$ to $\lanfull$ by setting
\[\val \varphi_{\cl M} = \{w\in \dom{\cl M}:  \cl M \models\varphi (w)\}.\]
\end{definition}

It can be easily proven by induction on $\varphi$ that \emph{persistence} (or \emph{monotonicity}) holds: for all $w,v \in W$, if $w \peq v$ and $ \mathcal M \models \varphi(w)$, then $ \mathcal M \models \varphi(v)$.

\section{Labelled and typed structures}\label{SecNDQ}

Our proof of the finite model property is based on various combinatorial manipulations and well-foundedness properties stemming from Kruskal's theorem~\cite{krusty}.
These techniques are best developed in terms of {\em quasimodels,} and {\em labelled structures} in general.
As we will see, such structures generalise standard models, thus allowing us to work exclusively with labelled structures in the sequel.

\begin{definition}
Let $\Sigma$ be a set of formulas closed under subformulas and let $\Phi=(\Phi^+,\Phi^-)$ be a pair of elements of $2^\Sigma$.

We say that $\Phi$ is a {\em $\Sigma$-type} if $ \Phi ^+$ is $\Phi^- $-consistent (i.e., if $\Phi^+ \not\vdash \Phi^-$).
We denote the set of $\Sigma$-types by $\type \Sigma$.
If, moreover, $\Phi^+\cup \Phi^- = \Sigma$, we say that $\Phi$ is {\em complete.}
We may   write {\em type} instead of {\em $\lanfull$-type.}
\end{definition}

Each point $w$ in our structures will be labelled by triples $\Phi=(\Phi^+,\Phi^-,\Phi^{\notnec})$, where, intuitively, $\Phi^+$ records formulas true on $w$, $\Phi^-$ records false formulas, and $\Phi^{\notnec}$ records formulas that are false on $\rel$-accessible worlds; essentially, these are formulas of the form $\nec\varphi$ which are made false `directly' by a $\rel$-successor instead of a $\sqSubset$-successor.

\begin{definition}
Let $\Sigma$ be a set of formulas closed under subformulas and let $\Phi=(\Phi^+,\Phi^-,\Phi^{\notnec})$ be a triple of elements of $2^\Sigma$.
We say that $\Phi$ is a {\em $\Sigma$-label} if $( \Phi ^+, \Phi^- )$ is a $\Sigma$-type and $\Phi^\Box \cap \Phi^{\notnec} =\varnothing $.
We denote the set of $\Sigma$-labels by $\lbl\Sigma$.
If, moreover, $( \Phi ^+, \Phi^- )$ is complete $\Sigma$-type, we say that $\Phi$ is {\em complete.}
\end{definition}

We may tacitly view a $\Sigma$-type $\Phi$ as a $\Sigma$-label by setting $\Phi^\notnec =\varnothing$.
If $\Phi$ is a $\Sigma$-label, we define
\begin{align*}
\Phi^\nec & =\{\varphi:\nec\varphi\in \Phi^+\}&\Phi^\ps & =\{\varphi:\ps\varphi\in \Phi^+\} \\
\Phi^\boxdot & = \Phi^+\cap \Phi^\nec& 
\Phi^\diamonddot & = \Phi^+\cup \Phi^\ps \\
\Phi^\notnecd & = \Phi^-\cup \Phi^\notnec.\\
\end{align*}

Labels themselves can be endowed with various binary relations, which will be useful later in order to incorporate them into larger structures.

\begin{definition}
For $\Sigma\subseteq\lanfull$ and $\Phi,\Psi\in \lbl\Sigma$, define
\begin{enumerate}[label=(\alph*)]

\item  $\Phi \peqT \Psi$ if $\Phi^+  \subseteq \Psi ^+$ and $\Phi^-  \supseteq \Psi ^-$;

\item  $\Phi \sqsT \Psi$ if $\Phi^\nec  \subseteq  \Psi ^\boxdot$, $\Phi^\ps  \supseteq \Psi^\diamonddot$, and $\Phi^{\notnec}\supseteq   \Psi^{\notnecd} $.

\item  If $\Sigma \subseteq \Delta$ are both closed under subformulas and $\Phi , \Psi \in \lbl \Delta$, we will write $\Phi \subT \Psi $ if $\Phi {{\upharpoonright}} \Sigma = \Psi{{\upharpoonright}} \Sigma $, where
\[ \Psi{{\upharpoonright}} \Sigma := (\Psi^+ \cap \Sigma, \Phi^- \cap \Sigma, \Psi^{\notnec}\cap \Sigma ) .\]

\end{enumerate}
\end{definition}

Note that $\subT$ is actually an equivalence relation, but we will mostly use it when $\Phi$ only contains formulas from $\Sigma$, but $\Psi$ potentially contains more.
If $\Phi$ is a $\Sigma$-label, we may write $\varphi\in \Phi$ instead of $\varphi\in \Phi^+$, $\overline \varphi\in \Phi$ instead of $\varphi\in \Phi^-$, and $\ps\overline \varphi\in \Phi$ instead of $\varphi\in \Phi^{\notnec}$.
We can think of $\overline \varphi$ as a `pseudo-negation' of $\varphi$, although this notation will only be used in the metalanguage.

Often (but not always), we will want $\Sigma$ to be finite.
To this end, given $\Delta\subseteq \landi$, we write $\Sigma \Subset\Delta$ if $\Sigma$ is finite and closed under subformulas.

\begin{definition}
A {\em $\Sigma $-frame} is a triple $\fA=(\dom{\fA},\rel_\fA,\ell_\fA)$, consisting of a transitive frame equipped with a function $\ell_\fA\colon \dom{\fA}\to \lbl\Sigma$ which is monotone on $\rel$, in the sense that $w\rel_\fA v$ implies that $\ell_\fA(w) \sqsT \ell_\fA(v)$.
If $|\fA|$ is finite and {\em tree-like,} in the sense that the predecessors of any node are totally ordered by $\rrel_\fA$, then $\fA$ is a {\em $\Sigma$-sprout.}
If $\ell_\fA(w)  $ is complete for all $w\in\dom{\fA}$, we say that $\fA $ is {\em completely labelled}.

If $\ell_\fA(w) = (\Phi^+,\Phi^-,\Phi^{\notnec})$, we write $\ell_\fA^\circ(w) :=  \Phi^\circ $ for $\circ\in \{+,-,\notnec\}$.
\end{definition}

As before, we may drop subindexes when clear from context.
If $\fA$ is a $\Sigma$-sprout, the {\em height} of $\fA$, ${\rm hgt}(\fA)$, is the maximum $n$ so that there exists a sequence $w_0\srel_\fA w_1 \srel_\fA \ldots \srel_\fA w_n $ and its {\em width,} ${\rm wdt}(\fA)$, is the maximum $n$ such that there are $w\in\dom{\fA}$ and $V=\{v_1,\ldots,v_n\}$ with $w\rel^1_\fA v_i$ and $v_i \not \rrel_\fA v_j$ whenever $i\neq j$.
Such a maximal set $V$ is a {\em representative set of successors} for $w$.

We will use ${{\upharpoonright}}$ to indicate domain restrictions in the standard way, so that for example if $\fA$ is a $\Sigma$-sprout and $X \subseteq \dom{\fA}$, then ${\rel_\fA} {{\upharpoonright}} X = {{\rel_\fA}}\cap X \times X$, and $\fA {{\upharpoonright}} X = (X ,{\rel_\fA} {{\upharpoonright}} X, \ell_\fA {{\upharpoonright}} X)$.
If $R$ is a binary relation and $a$ is in its domain, then as usual, $R(a) = \{b: a\mathrel R b\}$. In particular, for $w\in \dom{\fA}$, ${\rrel_\fA}(w) = \{v\in\dom{\fA}: w\rrel_\fA v\}$.
Then, we define $\fA_w := \fA {{\upharpoonright}} {\rrel_\fA}(w)$.
The {\em depth} of $w\in \dom{\fA}$, denoted ${\rm dpt}_\fA(w)$, is defined to be the height of $\fA(w)$.

\begin{definition}
A {\em potential defect} of a $\Sigma$-frame $\fA$ is a pair $\delta= (w,\varphi)$, where $w\in \dom{\fA}$, $\varphi \in \ell (w) $, and one of the following occurs.
\begin{enumerate}[label=(\alph*)]

\item $\varphi = \ps\psi$ or $\varphi=\ps\overline\psi$, in which case, we say that $ \delta $ is {\em modal,}

\item $\varphi = \overline{\psi\to \theta} $ and $ \psi \notin \ell  (w)$ or $\varphi = \overline{\nec \psi}$ and $ \ps \overline \psi \notin \ell (w)$, in which case we say that $\delta$ is {\em intuitionistic.}

\end{enumerate}

\end{definition}

Intuitionistic defects will be far more important to us than modal ones.
As such, defects will be assumed intuitionistic, unless they are explicitly specified to be modal. 

\begin{definition}\label{frame}    
Let $\fA$ be a $\Sigma$-frame and $\delta=(v,\ps \psi)$ be a potential modal defect of $\fA$.
We say that $\delta$ is {\em resolved} if there is $v'\ler_\fA v$ such that $\psi \in\ell_\fA(v')$.

A $\Sigma$-frame $\fA$ is {\em modal} if it is completely labelled and all of its modal defects are resolved.
A modal $\Sigma$-sprout is a {\em $\Sigma$-tree.}
\end{definition}

In order to obtain a model from a modal $\Sigma$-frame, we need to equip it with an intuitionistic preorder.

\begin{definition}
Let $\mathfrak F$ be a $\Delta$-labelled frame and $\Sigma\subseteq \Delta$.
A preorder ${\peq} \subseteq \dom{\fA}\times\dom{\fA}$ is a {\em $\Sigma$-intuitionistic order} if it is expansive over $\sbs_{\fA}$ and, whenever $ w\peq v$, it follows that $\ell(w){\upharpoonright}\Sigma \peqT \ell(v){\upharpoonright}\Sigma$.
\end{definition}

Note that we in general allow $\peq$ to be a preorder, simply because our constructions may not preserve antisymmetry.
We often simply write {\em intuitionistic relation} when $\Sigma$ is either clear from context or not too relevant for the discussion at hand.
As their name implies, intuitionistic relations should resolve intuitionistic defects.

\begin{definition}
An intuitionistic relation $\peq$ {\em resolves} a defect $(w,\varphi)$ if there is $v \seq w$ such that $\overline \varphi \in \ell  (v)$ but  $(v,\varphi)$ is not a defect.
We say that $v$ is the {\em point of resolution.}
\end{definition}

By equipping modal $\Sigma$-frames with intuitionistic relations, we obtain quasimodels.

\begin{definition}
Fix $\Sigma$ closed under subformulas.
A {\em $\Sigma$-quasimodel} is a tuple $\cl Q=(\dom{\cl Q},\peq_\cl Q,\sbs_\cl Q,\ell_\cl Q)$, consisting of a $\Sigma$-tree equipped with an intuitionistic relation.
If $\peq_\cl Q$ resolves all intuitionistic defects, then $\cl Q$ is a {\em $\Sigma$-model.}
\end{definition}

We say that a $\Sigma$-labelled structure $\fA$ of any of the above sorts {\em falsfies} $\varphi\in\lanfull$ if there is $w\in\dom{\fA}$ such that $\varphi\in \ell^-(w)$. 
As we are interested in the validity problem, falsifiability will be more relevant to us than satisfiability.
As usual, a model $\cl M$ {\em falsifies} $\varphi\in\lanfull$ if $\val\varphi_{\cl M}\neq \dom{\cl M}$.
Likewise, a labelled structure $\fA$ {\em falsifies} $\varphi $ if there is $w\in\dom\fA$ such that $\overline\varphi\in\ell(w)$.

\begin{proposition}\label{propQMSound}
Given $\varphi\in\lanfull$, the following are equivalent:
\begin{enumerate}

\item There is a model $\cl M$ falsifying $\varphi$.

\item There is an $\lanfull$-model $\cl Q$ falsifying $\varphi$.

\item There are $\Sigma\Subset\lanfull$ and a $\Sigma$-model $\cl Q$ falsifying $\varphi$.

\end{enumerate}
In all cases, $\cl Q$ is finite if and only if $\cl M$ is.
\end{proposition}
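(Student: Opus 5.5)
I will prove the cycle of implications $1\Rightarrow 2\Rightarrow 3\Rightarrow 1$, keeping track of finiteness at each step.

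\emph{From 1 to 2.} Given a model $\cl M$ and $w_0$ with $\cl M\not\models\varphi(w_0)$, label each $w$ by its full $\lanfull$-type together with a box-witness component:
\[\ell(w)=\big(\{\psi:\cl M\models\psi(w)\},\ \{\psi:\cl M\not\models\psi(w)\},\ \{\psi: \exists v\ler w\ \cl M\not\models\psi(v)\}\big).\]
Consistency of $(\ell^+(w),\ell^-(w))$ follows from soundness of $\ikf$ for intuitionistic transitive models, which is checked axiom by axiom (for instance $\mathrm{FS2}$ and $\mathrm{CD}$ via the confluence of $\peq$ over $\rel$). The condition $\ell^\nec(w)\cap\ell^{\notnec}(w)=\varnothing$ holds because $\rel\subseteq\sqSubset$ by Lemma~\ref{lemLL}. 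Monotonicity of $\ell$ along $\rel$, i.e.\ $\ell(w)\sqsT\ell(v)$ whenever $w\rel v$, uses transitivity of $\rel$ and of $\sqSubset$: boxes at $w$ hold at $v$ and propagate to its successors, witnesses for $v$'s diamonds and $\notnecd$-formulas are also $\rel$-successors of $w$, and $\ell^-(v)\subseteq\ell^{\notnec}(w)$ directly. Modal defects are resolved by the truth clause for $\ps$, and labels are complete by construction. The order $\peq_\cl M$ is expansive and, by persistence, monotone on labels, so it is an intuitionistic order. It resolves each defect: if $\psi\to\theta$ fails at $w$, pick $v\seq w$ with $\psi$ true and $\theta$ false there; if $\nec\psi$ fails at $w$ but $\psi\notin\ell^{\notnec}(w)$, the failing $\sqSubset$-successor has the form $w\peq v\rel u$, and at $v$ we have $\overline{\nec\psi}$ and $\ps\overline\psi$, so $(v,\overline{\nec\psi})$ is not a defect. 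Frame-tree-likeness is not required at this point, since the structures need only be modal $\Sigma$-frames. The domain is unchanged, so finiteness is preserved.

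\emph{From 2 to 3.} Restrict every label to $\Sigma={\rm sub}(\varphi)$. Each of $\sqsT$, completeness, resolution of modal defects, and resolution of intuitionistic defects refers only to subformulas of the formulas involved, so all of them survive the restriction. The domain is again unchanged.

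\emph{From 3 to 1, and the main obstacle.} Given a $\Sigma$-model $\cl Q$, I would set $\val p=\{w:p\in\ell^+(w)\}$, taking the quotient by the preorder $\peq$ to obtain a partial order if necessary; expansivity and labels pass to the quotient because equivalent points have equal $\Sigma$-labels. Persistence is immediate. The core is a truth lemma, proved by induction on $\psi\in\Sigma$: $\psi\in\ell^+(w)$ implies $\cl Q\models\psi(w)$, and $\psi\in\ell^-(w)$ implies $\cl Q\not\models\psi(w)$. The delicate cases are the following.
\begin{itemize}
\item \emph{Positive $\nec\psi$ along $\sqSubset$:} from $w\peq v\rel u$ we get $\nec\psi\in\ell(v)$ by $\peqT$, and then $\psi\in\ell(u)$ by $\sqsT$.
\item \emph{Negative $\nec\psi$:} either $\psi\in\ell^{\notnec}(w)$, and I still need a $\rel$-successor carrying $\overline\psi$, or the defect is resolved by some $v\seq w$ with $\ps\overline\psi\in\ell(v)$.
\end{itemize}
In the first situation I would use the fact that $\ps\overline\psi$-type items count as modal potential defects. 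Since $\cl Q$ is modal, they are resolved by some $v'\ler v$ with $\overline\psi\in\ell(v')$.

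Implication and diamond are handled similarly, using the resolution of intuitionistic defects and the completeness of labels. Completeness is what makes the positive and negative halves of the induction mesh, since $\psi\notin\ell^+$ gives $\psi\in\ell^-$. I expect the main obstacle to be checking that the $\notnec$-bookkeeping is exactly strong enough for the $\nec$ case. Again the domain does not change, which settles the finiteness claim.
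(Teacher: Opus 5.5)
Your argument follows the paper's proof. You label each world of a model by what it satisfies; your explicit $\notnec$-component $\{\psi:\exists v\ler w\ \cl M\not\models\psi(v)\}$ is exactly what the paper's one-line labelling needs for $\sqsT$ to hold, though the paper leaves it implicit. Conversely, you read $\val p$ off $\ell^+$ and prove a truth lemma, and your handling of the negative $\nec$ case through the modal defects $\ps\overline\psi$ is the right one. Routing $1\Rightarrow 3$ through $2$ by restriction makes no difference.

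The one step that fails as written is the claim that CD can be checked via confluence. Take worlds $w\prec w'$ and $u$, with $w'\rel u$ as the only $\rel$-edge, $q$ true only at $u$ and $p$ true nowhere. Both confluence conditions hold trivially, since the only $\rel$-edge is $w'\rel u$ and $w'$, $u$ are $\peq$-maximal, so this is an $\ikf$ frame. Yet $\nec(p\vee q)$ holds at $w$, whose only $\sqSubset$-successor is $u$, while $\nec p$ and $\ps q$ both fail at $w$. Since CD is listed as an axiom, no type can contain it in $\Phi^-$. So for $\varphi=\mathrm{CD}$, item 1 holds and item 2 fails.

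The paper's proof relies on the same soundness tacitly (``easily check \ldots\ is a quasimodel''). So this is a slip in the paper's axiom list, not a flaw in your strategy: CD, the constant-domain axiom, is not among Fischer Servi's axioms and is not valid on these frames. With CD removed, your axiom-by-axiom check goes through.

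Separately, your optional quotient by $\peq$-equivalence needs more care, because the induced relation on classes need not be transitive. From $a\rel b$, $b\peq b'\peq b$ and $b'\rel c$, forward confluence only gives $a\rel c'$ for some $c'\seq c$. Small expansive preorders exist where the class of $a$ sees that of $b$, which sees that of $c$, yet no representative of the first sees one of the last.

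Taking the transitive closure of the induced relation fixes this. Truth is preserved because such chains give $a\rel c'$ with $c\peq c'$ (for $\ps$, via persistence) and $a\sqSubset c$ (for $\nec$), and both confluence conditions lift along chains by induction. Alternatively, work with preorders throughout, as the paper tacitly does, since the semantics never uses antisymmetry. Also, $\peq$-equivalent points need only share $\ell^+$ and $\ell^-$, not $\ell^{\notnec}$. This is harmless provided you prove the truth lemma on $\cl Q$ before quotienting.
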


\begin{proof}
Given a model $\cl M$ and a set $\Sigma$ closed under subformulas, we can define $\ell_\cl M(w) = \{\varphi\in\Sigma:(\cl M,w)\models \varphi\}$ and easily check that $(\dom{\cl M},\peq_\cl M,\sbs_\cl M,\ell_\cl M)$ is a quasimodel.
By setting $\Sigma=\lanfull$, we obtain the second item from the first, and by letting $\Sigma$ be the set of subformulas of $\varphi$, we obtain the third.

Conversely, given a $\Sigma$-model $\cl Q$, we may define $\val p_\cl Q =\{w\in\dom{\cl Q}:p\in\ell(w)\}$. 
Then, for $\varphi\in \Sigma$, a standard induction shows that $\varphi\in\ell(w)$ if and only $ \cl Q \models\varphi(w) $, so the first item is obtained from the second when $\Sigma=\lanfull$ and from the third when $\Sigma$ is finite.
\end{proof}

Thus in the sequel we will work exclusively with labelled structures instead of models.
In particular, we will refer to $\lanfull$-models simply as {\em models.}

\section{Comparing Labelled Frames}

We have defined various binary relations on $\Sigma$-labels (such as $\peqT $), which have helped us incorporate them into larger structures.
It will be useful to extend such orders to labelled structures, as this will allow us to perform various operations with them, such as amalgamating them into even larger structures.
Let us begin by defining an analogue of the intuitionistic relation.

\begin{definition}
Let $\fA $ and $\fB$ be $\Sigma$-frames.
We write $\fA \peqT \fB$ if there is $R\colon \fA\simu \fB $ such that, whenever $w\mathrel R v$, it follows that $\ell_\fA(w) \peqT \ell_\fB(v)$.
Given $w\in \dom{\fA}$ and $v\in \dom{\fB}$, if such an $R$ exists with $w\mathrel R v$, we write $(\fA,w) \peqT (\fB,v)$.
\end{definition}

It can easily be checked using Proposition~\ref{propExpandingClos} that $\peqT$ defines a quasiorder on the class of $\Sigma$-labelled frames.
One can imagine building a quasimodel by amalgamating states where, whenever $\fA$ is a state with some defect $\delta = (w,\varphi)$, we graft a sprout $\fB $ with some world $v$ such that $(\fA,w) \peqT (\fB,v)$ and $v$ resolves $\delta$.
Our strategy will fall along these lines, with the caveat that the `grafting' must be done very carefully to ensure termination.

Next, we turn to the appropriate notion of `substructure' for labelled frames, given by simulations.
Below, recall that if $\Phi\in \type\Sigma$ and $\Psi\in \type\Delta$, $\Phi \subT \Psi$ means that $\Phi = \Psi\cap \Sigma$.

\begin{definition}
Let $\Sigma\subseteq \Delta \subseteq \landi$ be closed under subformulas, $\cl X$ be a $\Sigma$-labelled frame and $\cl Y$ be $\Delta$-labelled. A relation
${\simrel} \colon  {\cl X} \simu {\cl Y} $
is a {\em simulation} if, whenever $x\simrel y$, it follows that $\ell_{\cl X} (x) \subT  \ell_{\cl Y}(y)$.

\begin{enumerate}[label=(\alph*)]

\item If there exists a simulation $ E \colon \cl X \simu \cl Y$, we write $ \cl X  \simu_\Sigma  \cl Y $.
Given $x\in \dom{\cl X}$ and $y\in {\cl Y}$, if $E$ can moreover be chosen such that $x\mathrel E y$, we write $(\cl X , x) \simu_\Sigma (\cl Y, y)$.
If $E$ is a function, we say that $E$ is a {\em homomorphism.}

\item We write $ \cl X  \subT  \cl Y $ if $ \cl X  \simu_\Sigma  \cl Y $ via a homomorphism $h\colon\dom{\cl X}\to\dom{\cl Y} $.
If $h$ can moreover be chosen such that $y=h(x)$, we write $(\cl X , x) \subT (\cl Y, y)$.

\end{enumerate}

\end{definition}

In view of Proposition~\ref{propExpandingClos}, $\subT$ defines a quasiorder on the class of $\Sigma$-labelled frames.
As we will see, this quasiorder will turn out to be rather interesting and useful.
But first, let us review the standard canonical model, in the light of labelled structures.

\section{The Canonical Model}\label{secCanonical}

Our finite models will be extracted from the canonical model for $\ikf$.
In our presentation, it would be more precise to describe it as the canonical {\em $\lanfull$-model,} but in view of Proposition~\ref{propQMSound}, the difference is inessential.
The construction is essentially that of Fischer Servi~\cite{servi1977modal}, adapted to $\sf IK4$ by Aguilera et al.~\cite{polytopologicalCS4}, and we follow the latter presentation.

A set $\Phi \subseteq \lanfull$ is a {\em theory} if it is deductively closed, and {\em prime} if whenever $\varphi\vee\psi\in \Phi$, it follows that $\varphi\in \Phi$ or $\psi\in \Phi$.
We say that $\Psi$ \define{extends} $\Phi$ if $\Phi \subseteq\Psi $.
In the intuitionistic setting, the classical Lindenbaum lemma produces prime sets.
	
\begin{lemma}[Lindenbaum lemma~\cite{polytopologicalCS4}]\label{lem:lindenbaum}
Let $\Sigma \subseteq \lanfull$ be closed under subformulas.
Any $\Xi$\term{-consistent} set $\Phi\subseteq\Sigma$ of formulas can be extended to a prime $\Xi$-\term{consistent} theory $ \Phi _* $.
\end{lemma}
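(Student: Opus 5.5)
The plan is the usual Zorn's lemma argument, with primeness coming from maximality. (The phrase "closed under subformulas" only fixes the ambient setting; the extension will live in $\lanfull$.)

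Consider the family $\mathcal P$ of all sets $\Psi \subseteq \lanfull$ with $\Phi\subseteq\Psi$ and $\Psi\not\vdash\Xi$, ordered by inclusion. It is nonempty because it contains $\Phi$. Take a chain $(\Psi_i)_{i\in I}$ in $\mathcal P$. Suppose its union derived $\Xi$. Then some finite conjunction $\gamma$ from the union and some disjunction $\delta$ from $\Xi$ would satisfy $\vdash\gamma\imp\delta$. The finitely many conjuncts of $\gamma$ all lie in a single $\Psi_i$, because the family is a chain. That would make $\Psi_i$ fail to be $\Xi$-consistent, a contradiction. Hence Zorn's lemma yields a maximal element $\Phi_*$.

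We then check the required properties of $\Phi_*$ in two steps.

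\textbf{Deductive closure.} Suppose $\Phi_*\vdash\varphi$ and we add $\varphi$. If $\Phi_*\cup\{\varphi\}\vdash\Xi$, then the finite conjunctions can be combined: by intuitionistic propositional reasoning (transitivity of implication and $\wedge$-introduction), $\Phi_*$ itself would derive the same disjunction from $\Xi$, which is impossible. So $\Phi_*\cup\{\varphi\}\in\mathcal P$, and maximality gives $\varphi\in\Phi_*$.

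\textbf{Primeness.} This is the step that uses $\vee$-elimination, and it is the only genuinely intuitionistic point. Suppose $\varphi\vee\psi\in\Phi_*$ but $\varphi,\psi\notin\Phi_*$. By maximality, $\Phi_*\cup\{\varphi\}\vdash\delta_1$ and $\Phi_*\cup\{\psi\}\vdash\delta_2$ for disjunctions $\delta_1,\delta_2$ from $\Xi$. Let $\gamma$ be the conjunction of the finitely many side formulas from $\Phi_*$ used in either derivation. Then
\[\vdash \gamma\wedge\varphi\imp\delta_1\vee\delta_2 \quad\text{and}\quad \vdash\gamma\wedge\psi\imp\delta_1\vee\delta_2.\]
Since $(\varphi\vee\psi)\wedge\gamma\imp(\varphi\wedge\gamma)\vee(\psi\wedge\gamma)$ is an intuitionistic tautology, $\vee$-elimination gives
\[\vdash\gamma\wedge(\varphi\vee\psi)\imp\delta_1\vee\delta_2 .\]
Now $\delta_1\vee\delta_2$ is again a disjunction from $\Xi$ (after merging the two finite subsets), and $\gamma\wedge(\varphi\vee\psi)$ is a conjunction from $\Phi_*$. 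So $\Phi_*\vdash\Xi$, a contradiction. Therefore $\Phi_*$ is a prime, $\Xi$-consistent theory extending $\Phi$.

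No modal axioms are needed anywhere in this argument: only the intuitionistic tautologies and \ref{rl:mp} contained in $\ikf$.
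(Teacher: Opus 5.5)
Your proof is correct. The paper gives no proof of this lemma and simply imports it from \cite{polytopologicalCS4}; your Zorn's-lemma argument is the standard one: take a maximal $\Xi$-consistent extension, which is automatically deductively closed and is prime by $\vee$-elimination. You are also right that $\Sigma$ plays no role and that only intuitionistic tautologies and modus ponens are used.
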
	

Prime theories can be seen as a special case of full types.

\begin{lemma}\label{lemCompleteType}
Let $\Phi$ be a prime theory.
Then, $(\Phi,\lanfull \setminus \Phi)$ is a complete $\lanfull$-type.
\end{lemma}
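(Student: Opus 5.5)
The statement asks us to check two things about $(\Phi,\lanfull\setminus\Phi)$. It must be a $\lanfull$-type, meaning $\Phi\not\vdash \lanfull\setminus\Phi$. It must also be complete, meaning $\Phi\cup(\lanfull\setminus\Phi)=\lanfull$. Completeness is immediate from the definition of the pair, since every formula lies either in $\Phi$ or in its complement, so all the work is in consistency. Here I read ``prime theory'' as implicitly including consistency (i.e.\ $\bot\notin\Phi$), as is standard and as is the case for the theories produced by Lemma~\ref{lem:lindenbaum}; without this, $\Phi=\lanfull$ would be a counterexample.

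For consistency I would argue by contradiction. Suppose $\Phi\vdash\lanfull\setminus\Phi$. By the Gentzen-style convention, this gives a finite conjunction $\gamma=\bigwedge\Gamma'$ with $\Gamma'\subseteq\Phi$ and a finite disjunction $\delta=\bigvee\Delta'$ with $\Delta'\subseteq\lanfull\setminus\Phi$ such that $\vdash\gamma\imp\delta$. Since $\Phi$ is a theory, it is deductively closed, so it contains $\gamma$; the empty case gives $\gamma=\top$, which is derivable and hence in $\Phi$. Closure under modus ponens with the theorem $\gamma\imp\delta$ then yields $\delta\in\Phi$.

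Next I would do an induction on the size of $\Delta'$, using primeness. If $\Delta'=\varnothing$, then $\delta=\bot\in\Phi$, which contradicts consistency. If $\Delta'$ is nonempty, repeated application of primeness to $\delta=\psi_1\vee(\psi_2\vee\cdots)$ shows that some $\psi_i\in\Delta'$ lies in $\Phi$. This contradicts $\Delta'\subseteq\lanfull\setminus\Phi$.

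The only point I would check carefully is the bookkeeping of the conventions: the empty conjunction and empty disjunction, and the bracketing of $\bigvee$. Neither is a real obstacle, since associativity of $\vee$ is intuitionistically provable and deductive closure absorbs any rebracketing.
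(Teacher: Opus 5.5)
Your proof is correct and follows the paper's own argument: deductive closure puts the offending disjunction into $\Phi$, and primeness then puts one of its disjuncts into $\Phi$, contradicting that the disjuncts lie in $\lanfull\setminus\Phi$. Your treatment of the empty disjunction adds a case the paper's proof skips; that case needs $\bot\notin\Phi$, which the consistent prime theories of Lemma~\ref{lem:lindenbaum} provide.
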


\begin{proof}
It suffices to check that $\Phi$ is $(\lanfull \setminus \Phi)$-consistent, since, in this case, the pair is clearly complete.
Otherwise, let $  \bigvee_i \psi_i $ be a finite disjunction of formulas of 
$ \lanfull \setminus \Phi $ such that $\Phi\vdash  \bigvee_i \psi_i$.
Since $\Phi$ is prime, there is $i$ such that $\Phi\vdash \psi_i$.
But $\Phi$ is a theory, i.e., deductively closed, and hence $\psi_i \in \Phi$, a contradiction.\end{proof}

We can thus view a theory $\Phi$ as a type by setting $\Phi^+ = \Phi$ and $\Phi^- =\lanfull\setminus \Phi$.
With this in mind, we can define $\Phi^\nec$ and $\Phi^\ps$ as we did for $\Sigma$-labels.

\begin{definition}
We define the {\em canonical model} for $\ikf$ to be the $\lanfull$-labelled structure $\mathcal M_{\mathrm c} =( W _{\mathrm c} ,{\peq_{\mathrm c} },{\rel_{\mathrm c} }, \ell _{\mathrm c} )$, where
\begin{enumerate}[label=\alph*)]
	\item $W_{\mathrm c} $ is the set of prime theories,
	
\item 	${\peq_{\mathrm c}} \subseteq W_{\mathrm c} \times W_{\mathrm c} $ is defined by $\Phi \peq_{\mathrm c}  \Psi$ if and only if $\Phi  \subseteq \Psi  $,

\item ${\rel_{\mathrm c}}  \subseteq W_{\mathrm c} \times W_{\mathrm c} $ is defined by $\Phi \rel_{\mathrm c}  \Psi$ if and only if $  \bfrm\Phi \subseteq  \Psi $ and $  \Phi\subseteq \Phi^\ps $, and

\item $\ell_{\rm c} (\Phi) = (\Phi^+,\Phi^-,\Phi^\notnec) $, where
\[\Phi^\notnec =\{\varphi:\exists \Psi\seq_{\rm c} \Phi: \varphi\in \Phi^-\}.\]
\end{enumerate}
\end{definition}

As mentioned, this is essentially the canonical model construction of Fischer Servi~\cite{servi1977modal}.
While she did not consider $\sf IK4$ specifically, her proof can easily be adapted~\cite{polytopologicalCS4} to show the following.

\begin{proposition}\label{lem:truth-lemma:regular}
$\cl M_{\rm c}$ is an $\lanfull$-model.
\end{proposition}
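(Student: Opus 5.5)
We have to check that $\cl M_{\rm c}$ satisfies each clause in the definition of an $\lanfull$-model. Concretely: (i) $(W_{\rm c},\rel_{\rm c},\ell_{\rm c})$ is an $\lanfull$-frame, i.e.\ $\rel_{\rm c}$ is transitive and $\ell_{\rm c}$ is monotone for $\sqsubset_{\lanfull}$; (ii) it is completely labelled and all modal defects are resolved; (iii) $\peq_{\rm c}$ is an $\lanfull$-intuitionistic order; (iv) $\peq_{\rm c}$ resolves every intuitionistic defect. I read the definition of $\rel_{\rm c}$ as $\Phi^\nec\subseteq\Psi$ and $\Psi\subseteq\Phi^\ps$, which is the Fischer Servi condition. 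I read $\Phi^\notnec$ as the set of $\varphi$ such that $\varphi\in\Psi^-$ for some $\Psi\ler_{\rm c}\Phi$. The tree-likeness and finiteness in the word ``sprout'' cannot literally hold for the canonical structure. So, as in Proposition~\ref{propQMSound}, I take ``model'' here to mean a modal $\lanfull$-frame with an intuitionistic relation resolving all defects.

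\textbf{Frame conditions.} Completeness of labels is Lemma~\ref{lemCompleteType}. The condition $\Phi^\nec\cap\Phi^\notnec=\varnothing$ holds because $\nec\varphi\in\Phi$ and $\Phi\rel_{\rm c}\Psi$ give $\varphi\in\Psi$. Transitivity of $\rel_{\rm c}$ uses $4_\nec$ and $4_\ps$. For the box part, if $\nec\varphi\in\Phi$ then $\nec\nec\varphi\in\Phi$, so $\nec\varphi\in\Psi$ and then $\varphi\in\Theta$. For the diamond part, $\varphi\in\Theta$ gives $\ps\varphi\in\Psi$, hence $\ps\ps\varphi\in\Phi$, hence $\ps\varphi\in\Phi$.

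Monotonicity of labels, $\ell_{\rm c}(\Phi)\sqsubset_{\lanfull}\ell_{\rm c}(\Psi)$, needs three checks:
\begin{itemize}
\item $\Phi^\nec\subseteq\Psi^\boxdot$, by $4_\nec$;
\item $\Phi^\ps\supseteq\Psi^\diamonddot$, by $4_\ps$ together with the definition;
\item $\Phi^\notnec\supseteq\Psi^-\cup\Psi^\notnec$, directly from transitivity.
\end{itemize}

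\textbf{Intuitionistic order.} $\peq_{\rm c}$ is inclusion, which is a partial order, and $\Phi\subseteq\Psi$ trivially gives $\ell(\Phi)\peq_{\lanfull}\ell(\Psi)$. Expansiveness over $\rel_{\rm c}$ is the standard Fischer Servi lemma.
\begin{itemize}
\item Forward confluence: given $\Phi\subseteq\Phi'$ and $\Phi\rel_{\rm c}\Psi$, we need $\Psi'\supseteq\Psi$ with $\Phi'\rel_{\rm c}\Psi'$. Apply Lemma~\ref{lem:lindenbaum} to $\Psi\cup\Phi'^\nec$, keeping it $\{\chi:\ps\chi\notin\Phi'\}$-consistent. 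Any failure yields $\psi\wedge\nec$-stuff $\vdash\chi$ with $\psi\in\Psi$. By Nec and $K_\ps$, together with $\ps\psi\in\Phi\subseteq\Phi'$, this gives $\ps\chi\in\Phi'$. Disjunctions are handled with DP and N.
\item Backward confluence: given $\Phi\rel_{\rm c}\Psi\subseteq\Psi'$, extend $\Phi^+\cup\{\ps\psi:\psi\in\Psi'\}$ avoiding $\{\nec\chi:\chi\notin\Psi'\}$. A failure is refuted using FS2 and $K_\ps$.
\end{itemize}

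\textbf{Defect resolution.} The modal defects come in two kinds.
\begin{itemize}
\item For $\ps\psi\in\Phi$, build $\Psi\supseteq\Phi^\nec\cup\{\psi\}$ avoiding $\{\chi:\ps\chi\notin\Phi\}$. This uses $K_\ps$ for consistency and DP for primeness; N handles the empty case.
\item For $\ps\overline\psi$, i.e.\ $\psi\in\Phi^\notnec$, a witness exists by definition.
\end{itemize}

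The intuitionistic defects also come in two kinds.
\begin{itemize}
\item For $\psi\to\theta\notin\Phi$, extend $\Phi\cup\{\psi\}$ avoiding $\theta$ by Lemma~\ref{lem:lindenbaum}. The resulting point contains $\psi$, so it is not a defect.
\item For $\nec\psi\notin\Phi$, we need $\Phi'\supseteq\Phi$ and a $\rel_{\rm c}$-successor of $\Phi'$ omitting $\psi$. First extend $\Phi$ to $\Phi'$ avoiding $\nec\psi$ and containing $\ps\chi\to\nec\psi$-style witnesses. Equivalently, find $\Phi'$ with $\Phi'^\nec\cup\ldots$ being $\{\psi\}$-consistent relative to its diamonds. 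This is where CD is essential.
\end{itemize}

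\textbf{Main obstacle.} I expect this last box case to be the main difficulty. I would follow the argument in \cite{polytopologicalCS4}. Take $\Phi'$ prime with $\Phi\subseteq\Phi'$ and $\nec\psi\notin\Phi'$, chosen maximal among such extensions. Then show $\Phi'^\nec$ is $\{\psi\}\cup\{\chi:\ps\chi\notin\Phi'\}$-consistent. A failure gives $\nec(\psi\vee\bigvee\chi)$, which by CD yields $\nec\psi\vee\ps\bigvee\chi$, and DP then gives a contradiction. Finally, take $\Psi$ by Lindenbaum, so that $\Phi'\rel_{\rm c}\Psi$ and $\psi\notin\Psi$. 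Then $\psi\in\Phi'^\notnec$, so $(\Phi',\overline{\nec\psi})$ is not a defect, which resolves it.
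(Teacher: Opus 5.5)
Your verification is correct for the axiom list as printed. Apart from the $\overline{\nec\psi}$ case, it is the standard Fischer Servi argument that the paper defers to; the paper gives no proof of its own, only the citation. The genuine difference is in that last case.

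You use \ref{ax:cd}. If $(\Phi')^\nec$ fails to be $\{\psi\}\cup\{\chi:\ps\chi\notin\Phi'\}$-consistent, then $\nec(\psi\vee\bigvee\chi)\in\Phi'$, and CD, primeness, \ref{ax:dp} and \ref{ax:null} give a contradiction. Maximality of $\Phi'$ is never used, so $\Phi'=\Phi$ already works. Under CD, $\nec\psi\notin\Phi$ by itself forces $\psi\in\Phi^{\notnec}$, so $\cl M_{\rm c}$ has no $\overline{\nec\psi}$-defects at all, and your ``main obstacle'' disappears.

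The Fischer Servi route instead uses two Lindenbaum steps:
\begin{enumerate}
\item take a prime $\Psi\supseteq\Phi^\nec$ with $\psi\notin\Psi$;
\item take a prime $\Phi'\supseteq\Phi\cup\{\ps\theta:\theta\in\Psi\}$ avoiding $\{\nec\chi:\chi\notin\Psi\}$.
\end{enumerate}
The consistency needed in the second step follows from \ref{ax:fs}, exactly as in your backward-confluence step. The result is $\Phi\subseteq\Phi'\rel_{\rm c}\Psi$ with $\psi\notin\Psi$.

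Your route is shorter, but it depends on CD. CD is not an axiom of Fischer Servi's $\sf IK4$, and it is not even valid on the frames of Definition~\ref{DefSem}. For a countermodel, take $x\prec x'\rel y'$ with $x$ having no $\rel$-successors, $q$ true only at $y'$, and $p$ true nowhere. Then $\nec(p\vee q)$ holds at $x$, but $\nec p\vee\ps q$ does not. The FS2 argument never uses CD, so it is the one that survives if CD is dropped from the axiomatisation. The paper's stated semantics requires dropping it.
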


Note that $\peq_{\rm c}$ and $\peq_\lanfull$ coincide as we have defined them, but, a priori, $\rel_{\rm c}$ and $\rel_\lanfull$ may be different.
However, they do coincide modulo the $\sf IK4$ axioms.

\begin{lemma}
For all $\Phi,\Psi\in W_{\rm c}$, $\Phi \rel_{\rm c} \Psi$ if and only if $\ell_{\rm c}(\Phi) \rel_{\lanfull} \ell_{\rm c}(\Psi)$.
\end{lemma}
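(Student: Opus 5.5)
The plan is to unfold both sides of the equivalence and check the clauses one at a time. The only axioms needed are $\mathbf{4}_\nec$ and $\mathbf{4}_\ps$, together with transitivity of $\rel_{\rm c}$.

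I read the definition of $\rel_{\rm c}$ as $\Phi^\nec\subseteq\Psi$ and $\Psi\subseteq\Phi^\ps$; the printed "$\Phi\subseteq\Phi^\ps$" appears to be a typo. I read $\Phi^{\notnec}$ as the set of $\varphi$ that are false, i.e.\ lie in $\Theta^-$, at some $\Theta\ler_{\rm c}\Phi$. This matches the intended meaning of $\Phi^{\notnec}$: formulas that are falsified directly by a $\rel$-successor.

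\emph{Right to left.} Assume $\ell_{\rm c}(\Phi)\rel_\lanfull\ell_{\rm c}(\Psi)$. Then $\Phi^\nec\subseteq\Psi^\boxdot\subseteq\Psi^+=\Psi$. Also $\Psi=\Psi^+\subseteq\Psi^\diamonddot\subseteq\Phi^\ps$. These two inclusions are exactly the definition of $\Phi\rel_{\rm c}\Psi$.

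\emph{Left to right.} Assume $\Phi\rel_{\rm c}\Psi$; there are three clauses to check.

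First, $\Phi^\nec\subseteq\Psi^\boxdot=\Psi^+\cap\Psi^\nec$. We already have $\Phi^\nec\subseteq\Psi$. If $\nec\varphi\in\Phi$, then $\nec\nec\varphi\in\Phi$ by $\mathbf{4}_\nec$, since $\Phi$ is a theory. Hence $\nec\varphi\in\Psi$, that is, $\varphi\in\Psi^\nec$.

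Second, $\Psi^\diamonddot=\Psi\cup\Psi^\ps\subseteq\Phi^\ps$. The inclusion $\Psi\subseteq\Phi^\ps$ is given. If $\ps\varphi\in\Psi$, then $\ps\varphi\in\Phi^\ps$, so $\ps\ps\varphi\in\Phi$. By $\mathbf{4}_\ps$ this gives $\ps\varphi\in\Phi$, i.e.\ $\varphi\in\Phi^\ps$.

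Third, $\Psi^-\cup\Psi^{\notnec}\subseteq\Phi^{\notnec}$. If $\varphi\in\Psi^-$, then $\Psi$ itself witnesses $\varphi\in\Phi^{\notnec}$. If $\varphi\in\Psi^{\notnec}$, choose $\Theta$ with $\Psi\rel_{\rm c}\Theta$ and $\varphi\in\Theta^-$. It then suffices that $\rel_{\rm c}$ is transitive, so that $\Phi\rel_{\rm c}\Theta$ witnesses $\varphi\in\Phi^{\notnec}$.

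Transitivity follows from the first two clauses just proved. Given $\Phi\rel_{\rm c}\Psi\rel_{\rm c}\Theta$, we get $\Phi^\nec\subseteq\Psi^\nec\subseteq\Theta$ and $\Theta\subseteq\Psi^\ps\subseteq\Phi^\ps$.

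The main obstacle is the third clause, because it depends on the precise definition of $\Phi^{\notnec}$, which is garbled in the text. Suppose the witness is instead required to be a $\sqSubset_{\rm c}$-successor, i.e.\ one reached through a $\peq_{\rm c}$-step followed by a $\rel_{\rm c}$-step. Then in the third clause I would use Lemma~\ref{lemLL}: $\sqSubset_{\rm c}$ is transitive and contains $\rel_{\rm c}$. Since Proposition~\ref{lem:truth-lemma:regular} makes $\cl M_{\rm c}$ a frame where $\peq_{\rm c}$ is expansive, that lemma applies. The argument would otherwise be unchanged.
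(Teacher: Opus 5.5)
Your proof is correct and follows the paper's route. Right to left uses $\Psi^\boxdot\subseteq\Psi\subseteq\Psi^\diamonddot$, and left to right gets $\Phi^\nec\subseteq\Psi^\boxdot$ and $\Psi^\diamonddot\subseteq\Phi^\ps$ from $\mathbf{4}_\nec$ and $\mathbf{4}_\ps$. The paper's proof stops there and never checks the third clause $\Psi^{\notnecd}\subseteq\Phi^{\notnec}$ of $\rel_\lanfull$, so your check of it via transitivity of $\rel_{\rm c}$ (under the evidently intended reading of $\Phi^{\notnec}$) fills a real gap in the paper rather than departing from its argument.
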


\begin{proof}
That $\Phi \rel_{\rm c} \Psi$ follows tautologically from $\Phi\rel_\lanfull \Psi$, since if $\nec\varphi\in \Phi$ then from $\varphi,\nec\varphi\in \Phi$ we in particular deduce $\varphi\in \Psi$, and likewise if $ \varphi\in \Psi $ then this is sufficient to obtain $ \varphi\in \Psi^\diamonddot$  and thus $\ps\varphi\in\Phi$.

Conversely, assume that $\Phi \rel_{\rm c} \Psi$. Let $\nec\varphi \in \ell_{\rm c}(\Phi)$, i.e., $\nec\varphi\in \Phi$.
Then, $\varphi\in \Psi$ and using \ref{ax:trans:box} and modus ponens, $\nec\nec\varphi\in \Phi$, so $\nec\varphi\in \Psi$ and thus $\varphi\in \Psi^{\boxdot}$.
We can reason analogously to see that $\Phi^\ps \supseteq \Psi^{\diamonddot} $ and thus $\Phi \rel_\lanfull \Psi$.
\end{proof}

Note that we have defined the canonical model as a labelled structure rather than a standard model, mainly because this will be the more useful way to think about it.
However, we can as usual define $ \val p _{\mathrm c} := \{\Phi\in W_{\mathrm c} \mid p\in \Phi\}$ to view it as a proper model.

Recall that we defined $\llsim$ to be the transitive closure of $\peq\cup \rel$.
In the case of the canonical model, this relation has a simple syntactic characterisation.

\begin{lemma}\label{lemmBoxLL}
Given $\Theta,\Phi \in W_{\rm c}$, $ \Theta \llsim_{\rm c} \Phi $ if and only if $\Theta^\boxdot \subseteq \Phi^\boxdot$.
\end{lemma}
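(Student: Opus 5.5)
We prove the two directions separately. Throughout, $\llsim_{\rm c}$ is ${\sqSubset_{\rm c}}\cup{\peq_{\rm c}}$, where ${\sqSubset_{\rm c}} = {\rel_{\rm c}}\circ{\peq_{\rm c}}$. By Lemma~\ref{lemLL} this relation is already a preorder, so no further transitive closure is needed. We also use that, by the preceding lemma, $\Theta\rel_{\rm c}\Phi$ is equivalent to $\Theta^\nec\subseteq\Phi^\boxdot$ together with $\Theta^\ps\supseteq\Phi^\diamonddot$.

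For the left-to-right direction, it suffices to show that each of $\peq_{\rm c}$ and $\rel_{\rm c}$ preserves $\boxdot$-sets; preservation under composition is then immediate.
\begin{itemize}
\item If $\Theta\subseteq\Phi$, then $\Theta^\boxdot=\Theta\cap\Theta^\nec\subseteq\Phi\cap\Phi^\nec=\Phi^\boxdot$, since $\Theta^\nec\subseteq \Phi^\nec$.
\item If $\Theta\rel_{\rm c}\Phi$ and $\varphi\in\Theta^\boxdot$, then $\nec\varphi\in\Theta$, so $\varphi\in\Theta^\nec\subseteq\Phi^\boxdot$.
\end{itemize}
Composing these two facts gives $\Theta^\boxdot\subseteq\Phi^\boxdot$ whenever $\Theta\llsim_{\rm c}\Phi$.

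For the right-to-left direction, assume $\Theta^\boxdot\subseteq\Phi^\boxdot$. We aim to find $\Psi\in W_{\rm c}$ with $\Theta\peq_{\rm c}\Psi\rel_{\rm c}\Phi$; this yields $\Theta\sqSubset_{\rm c}\Phi$ in the orientation the translation of $\nec$ uses. To build $\Psi$, consider the set
\[\Gamma=\Theta\cup\{\nec\varphi:\varphi\in\Phi^\boxdot\}\cup\{\ps\varphi:\varphi\in\Phi\}\]
and the set of formulas to be avoided
\[\Delta=\{\nec\varphi: \varphi\notin\Phi^\boxdot\}\cup\{\ps\varphi:\varphi\notin\Phi^\diamonddot \}.\]
Rather than fully controlling $\Delta$, we only need $\Psi^\nec\subseteq\Phi^\boxdot$ and $\Psi^\ps\supseteq\Phi^\diamonddot$. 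The first condition, $\Psi^\nec\subseteq\Phi^\boxdot$, is the real constraint, so we let $\Delta_0=\{\nec\varphi:\varphi\notin\Phi^\boxdot\}$. For the second condition, note that $\Phi^\ps\subseteq\Phi^\diamonddot$ is not the issue: if $\ps\varphi\in\Phi$ then $\ps\ps\varphi$ is needed, and it is supplied by axiom \ref{ax:trans:dia} run in reverse, which we should not need since we can include $\ps\varphi$ for all $\varphi\in\Phi^\diamonddot$ directly in $\Gamma$. We then apply the Lindenbaum Lemma~\ref{lem:lindenbaum} to extend $\Gamma$ to a prime $\Delta_0$-consistent theory $\Psi$, which satisfies $\Theta\subseteq\Psi$ and $\Psi\rel_{\rm c}\Phi$.

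The main obstacle is showing that $\Gamma\not\vdash\Delta_0$. Suppose instead that
\[\theta\wedge\bigwedge_i\nec\alpha_i\wedge\bigwedge_j\ps\beta_j\vdash\bigvee_k\nec\gamma_k,\]
where $\theta\in\Theta$, each $\alpha_i\in\Phi^\boxdot$, each $\beta_j\in\Phi^\diamonddot$, and each $\gamma_k\notin\Phi^\boxdot$. We then try to push this derivation into $\Theta$ to contradict the hypothesis $\Theta^\boxdot\subseteq\Phi^\boxdot$, which forces formulas of the form $\nec\chi\in\Theta$ to have $\chi\in\Phi^\boxdot$. The tool is the implication rule for $\ps$ given by \ref{ax:fs}: we rewrite the derivation as
\[\theta\vdash\bigwedge_i\nec\alpha_i\wedge\bigwedge_j\ps\beta_j\to\bigvee_k\nec\gamma_k.\]
We would argue by cases using primeness of $\Phi$ and the axioms \ref{ax:cd}, \ref{ax:dp}, \ref{ax:trans:box}, aiming to transport the offending disjunction into $\Phi$, which contradicts $\gamma_k\notin\Phi^\boxdot$ via primeness. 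I expect this to need care, because $\Theta$ and $\Phi$ are only linked through their $\boxdot$-parts. Should this direct approach stall, the fallback is to reduce to the canonical-model facts of Proposition~\ref{lem:truth-lemma:regular}, namely that $\peq_{\rm c}$ is expansive, and to build the witness $\Psi$ in two Lindenbaum steps: first $\Theta\peq_{\rm c}\Theta'$ where $\Theta'$ contains $\nec\varphi$ for all $\varphi\in\Phi^\boxdot$, and then a $\rel_{\rm c}$-step from $\Theta'$ to $\Phi$.
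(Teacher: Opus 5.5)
Your left-to-right direction is correct and is essentially the paper's argument. The right-to-left direction has a genuine gap, and it starts with your choice of $\Gamma$. For $\Psi\rel_{\rm c}\Phi$ you only need $\Psi^\nec\subseteq\Phi$ and $\Phi\subseteq\Psi^\ps$. The extra component $\{\nec\varphi:\varphi\in\Phi^\boxdot\}$ is not required, and once you include it, the consistency claim you identify as the main obstacle is false.

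Here is a counterexample. Take a model whose $\peq$ is the identity, with worlds $x,y,z$ and $\rel$ consisting only of $x\rel y$ and $x\rel z$. Let $p$ be true only at $y$ and $q$ true nowhere, and let $\Theta,\Phi$ be the theories of $x,y$. Then $\Theta\rel_{\rm c}\Phi$, so the hypothesis $\Theta^\boxdot\subseteq\Phi^\boxdot$ holds. Moreover $p\in\Phi^\boxdot$ and $q\notin\Phi^\boxdot$. Also $\nec p\to\nec q\in\Theta$, because $\nec p$ fails at $x$ (witness $z$). Hence $\Gamma\vdash\nec q$ with $\nec q\in\Delta_0$, so $\Gamma$ is not $\Delta_0$-consistent.

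The same example defeats your fallback. Any $\Theta'\supseteq\Theta$ containing $\nec p$ also contains $\nec q$, so no such $\Theta'$ satisfies $\Theta'\rel_{\rm c}\Phi$. It is precisely the conjuncts $\nec\alpha_i$ in your displayed sequent that block the use of \ref{ax:fs}, which needs an antecedent that is a lone $\ps$-formula.

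Even with the correct sets, your sketch lacks the idea that closes the argument. The correct requirement is that $\Theta\cup\{\ps\varphi:\varphi\in\Phi\}$ be $\{\nec\gamma:\gamma\notin\Phi\}$-consistent. Using $\Phi^-$ here, rather than the complement of $\Phi^\boxdot$, lets primeness of $\Phi$ handle disjunctions; $\Phi^\boxdot$ is not prime.

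From $\theta\wedge\ps\varphi\vdash\nec\psi$, \ref{ax:fs} gives $\nec(\varphi\to\psi)\in\Theta$. But this formula need not lie in $\Theta^\boxdot$, so the hypothesis $\Theta^\boxdot\subseteq\Phi^\boxdot$ cannot yet be applied. The case analysis with \ref{ax:cd}, \ref{ax:dp} and \ref{ax:trans:box} that you allude to does not supply the missing unboxed conjunct.

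The paper handles this in two steps:
\begin{itemize}
\item If $\Theta\subseteq\Phi$, then $\Theta\peq_{\rm c}\Phi$ already gives $\Theta\llsim_{\rm c}\Phi$.
\item Otherwise it fixes $\delta\in\Theta\setminus\Phi$. Then both $\varphi\to\psi\vee\delta$ and $\nec(\varphi\to\psi\vee\delta)$ lie in $\Theta$, so $\varphi\to\psi\vee\delta\in\Theta^\boxdot\subseteq\Phi^\boxdot$. Since $\varphi\in\Phi$ while neither $\psi$ nor $\delta$ is in $\Phi$, primeness of $\Phi$ yields the contradiction.
\end{itemize}
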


\begin{proof}
Let us omit the subindex $\rm c$.
First assume that $ \Theta \llsim \Phi $.
If $\Theta\peq \Phi $, then clearly $\Theta^\boxdot \subseteq \Phi^\boxdot$.
Otherwise, there is $\Psi$ such that $\Theta\peq\Psi\rel \Phi$.
By the definitions, $\nec\varphi\in \Psi$ and hence $\varphi\wedge\nec\varphi\in \Phi$.

Now, assume that $\Theta^\boxdot \subseteq \Phi^\boxdot$.
We may moreover assume that $\Theta  \not\peq \Phi $, so that there is $\delta \in \Theta ^+\cap\Phi^- $.
We must find $\Psi$ such that $\Theta\peq\Psi\rel \Phi$; in other words, $\Theta^+\subseteq \Psi^+$, $   \Phi^+\subseteq \Psi^\ps   $, and $   \Psi^\nec  \subseteq \Phi^+ $ (or, $ \Psi ^ \notnec  \supseteq \Phi^- $).

It thus suffices to show that $\Theta ^+ \cup \ps \Phi^+ $ is $\nec \Phi ^ -$-consistent.
Otherwise, there are finite conjunctions $\theta$ from $\Theta^+$ and $\varphi$ from  $\Phi^+ $ and a finite disjunction $\psi$ from $\Phi^-$ such that $ \theta\wedge \ps \varphi \vdash \nec \psi$ (where we are using $\ps\bigwedge_i\varphi_i \vdash \bigwedge_i\ps \varphi_i$ and $\bigvee_i\nec\psi_i\vdash \nec\bigvee_i\psi_i$, which are easily shown to be derivable) and, hence, $\theta\vdash \ps \varphi \to \nec\psi$.
By \ref{ax:fs}, $\theta \vdash \nec(\varphi \to\psi)$ and, therefore, $\nec(\varphi \to\psi )\in\Theta^+$, so that $\boxdot (\varphi\to\psi\vee\delta) \in \Theta^+ $.
By assumption, $\boxdot (\varphi\to\psi\vee\delta) \in \Phi^+ $, which in particular implies that $ \varphi\to\psi\vee\delta  \in \Phi^+ $.
But $\varphi\in \Phi^+$ yet neither $\psi$ nor $\delta$ belong to $\Phi^+ $, a contradiction.
\end{proof}

\section{Finality}

In classical modal logic, a maximal, or {\em Final}, world, is one satisfying a given formula $\varphi$ which has no strict successors also satsifying $\varphi$.
Fine~\cite{Fin74c} showed that Final worlds can be used to extract finite models from the canonical model of a (classical) transitive logic.
We wish to extend this notion to the intuitionistic setting, where we immediately note that models have two primitive relations, $\peq$ and $\rel$, which leads to at least two notions of Finality.
We can also imagine worlds that are Final with respect to both relations simultaneously, which amounts to making them Final with respect to $\llsim $.

\begin{definition}
Let $\Sigma \subseteq \Delta$ be closed under subformulas, $\fA$ be a $\Sigma$-frame, $\fB$ be a $\Delta$-frame, $w\in \dom{\fA}$ and $v\in \dom{\fB}$.
Let $R $ be a preorder on $\dom{\fB}$.
We say that $v$ is {\em $R$-Final for $(\fA,w)$} if 
\begin{enumerate}[label=(\alph*)]

\item there is $v'$ such that $v \mathrel R v'$ and $(\fA,w) \subT (\fB,v') $, and

\item whenever $v'$ is such that $v \mathrel R v'$ and $(\fA,w) \subT (\fB,v') $, it follows that $v' \mathrel R v$.

\end{enumerate}

Similarly, $v$ is {\em simulably $R$-Final for $(\fA,w)$} if 
\begin{enumerate}[label=(\alph*')]

\item there is $v'$ such that $v \mathrel R v'$ and $(\fA,w) \simu_\Sigma (\fB,v') $, and

\item whenever $v'$ is such that $v \mathrel R v'$ and $(\fA,w) \simu_\Sigma (\fB,v') $, it follows that $v' \mathrel R v$.

\end{enumerate}
An {\em $R$-Final cluster for $(\fA,w)$} is a full $R$-cluster $C$ which contains at least one $R$-Final element for $(\fA,w)$.
We say that a world or a cluster is {\em $R$-Final for $\Sigma\subseteq \lanfull$} if it is $R$-Final for some pair $(\fA,w)$, where $\fA$ is a $\Sigma$-sprout, and {\em $R$-Final} if it is $R$-Final for some $\Sigma$. 
A world or cluster is {\em $R$-Final for $ \fA $} if it is $R$-Final for $(\fA,r)$, where $r$ is a root of $\fA$.
We adopt the analogous conventions for simulable $R$-finality.
\end{definition}

It is easy to check that if $C$ is $R$-Final, then every element of $C$ is $R$-Final, and moreover that if a world or cluster is $R$-Final for $\fA$ and $R\in \{\rel,\llsim\}$, then it is $R$-Final for $(\fA,r)$ for {\em any} root $r$.
Recall that a $\Sigma$-sprout is a $\Sigma$-labelled tree-like frame, where the labels are not necessarily complete, i.e., some formulas may be left undecided.

Our notion of Finality generalises the classical one, since individual formulas can be viewed as sprouts; to be precise, a formula $\varphi$ is identified with a sprout $\fA_\varphi$ which has only one irreflexive world labelled by $(\{\varphi\},\varnothing,\varnothing)$.
Likewise, a pseudo-negation $\overline\varphi$ may be identified with the sprout $\fA_{\overline \varphi}$ consisting of an irreflexive singleton  labelled by $(\varnothing,\{\varphi\},\varnothing)$.
In such cases, we will talk about $R$-Finality for $\varphi$ or for $\overline\varphi$, instead of $R$-Finality for $\fA_\varphi$ or for $\fA_{\overline\varphi}$.

We are particularly interested in the case where $\fB$ is the canonical model.
Below, we write $\Theta$ for $(\cl M_{\rm c},\Theta)$, as per our convention.
Note that $\Sigma$ need not be finite.

\begin{proposition}\label{propFinal}
Let $R\in \{\peq_{\rm c},\rrel_{\rm c},\rler_{\rm c},\llsim_{\rm c}\}$, $\fA $ be a $\Sigma$-sprout, and $r$ a root of $\fA$.
\begin{enumerate}

\item Suppose that $\Theta \in W_{\rm c}$ is such that $(\fA,r)\subT \Theta  $.
Then, there is $\Theta^* $ such that $\Theta\mathrel R \Theta^*$ and $\Theta$ is $R$-Final for $ \fA $.

\item Suppose that $\Theta \in W_{\rm c}$ is such that $(\fA,r)\simu_\Sigma \Theta  $.
Then, there is $\Theta^* $ such that $\Theta \mathrel R \Theta^*$ and $\Theta$ is simulably $R$-Final for $ \fA $.

\end{enumerate}
\end{proposition}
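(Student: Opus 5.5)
The plan is a Zorn's lemma argument in the style of Fine. I read the conclusion as ``$\Theta^*$ is $R$-Final for $\fA$''. Fix $R$ and let
\[S=\{\Theta'\in W_{\rm c} : \Theta\mathrel R\Theta' \text{ and } (\fA,r)\subT \Theta'\},\]
preordered by $R$; in the second item, $\simu_\Sigma$ replaces $\subT$. $S$ is nonempty because $R$ is reflexive. If every $R$-chain in $S$ has an $R$-upper bound in $S$, then Zorn's lemma, applied to the partial order of $R$-clusters of $S$, gives an $R$-maximal $\Theta^*\in S$. Maximality is exactly clause (b) of $R$-Finality: if $\Theta^*\mathrel R\Theta''$ and $(\fA,r)\subT\Theta''$, then $\Theta\mathrel R\Theta''$ by transitivity, so $\Theta''\in S$ and hence $\Theta''\mathrel R\Theta^*$. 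Clause (a) is witnessed by $\Theta^*$ itself. So everything reduces to finding upper bounds of chains.

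\textbf{Upper bounds via ultrafilter limits.} Let $(\Theta_i)_{i\in I}$ be a chain in $S$, with $I$ linearly preordered by $R$, and fix for each $i$ a witnessing homomorphism $h_i$ with $h_i(r)=\Theta_i$. Choose an ultrafilter $U$ on $I$ containing every final segment $\{i: i\geq j\}$. For each node $a$ of $\fA$ set
\[\Psi_a=\{\varphi: \{i:\varphi\in h_i(a)\}\in U\}.\]
The following checks are routine.
\begin{itemize}
\item $\Psi_a$ is a prime theory. It is closed under finite conjunctions and derivability, it omits $\bot$, and it is prime because $U$ is an ultrafilter.
\item Labels are preserved: $\ell_{\rm c}(\Psi_a)\upharpoonright\Sigma=\ell_\fA(a)$. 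Every $h_i(a)$ has this same restriction, and this includes the $\notnec$-component, which is defined from the $\peq_{\rm c}$-successors.
\item $\rel_{\rm c}$ is preserved. If $a\rel_\fA b$ and $\nec\varphi\in\Psi_a$, then $U$-many $i$ have $\nec\varphi\in h_i(a)$, hence $\varphi\in h_i(b)$, hence $\varphi\in\Psi_b$. The $\ps$-clause is dual.
\end{itemize}
So $a\mapsto\Psi_a$ is a homomorphism witnessing $(\fA,r)\subT\Psi_r$.

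It remains to show $\Theta_j\mathrel R\Psi_r$ for every $j$, using the syntactic descriptions of $R$.
\begin{itemize}
\item For $\peq_{\rm c}$ and $\llsim_{\rm c}$: we have $\Theta_j\subseteq\Theta_i$, respectively $\Theta_j^\boxdot\subseteq\Theta_i^\boxdot$ by Lemma~\ref{lemmBoxLL}, for all $i$ in a final segment. These inclusions pass to $\Psi_r$.
\item For $\rrel_{\rm c}$: for $U$-many $i$, either $\Theta_i=\Theta_j$ or $\Theta_j\rel_{\rm c}\Theta_i$. In the first case $\Psi_r=\Theta_j$. In the second, $\Theta_j^\nec\subseteq\Theta_i$ and $\Theta_i\subseteq\Theta_j^\ps$ hold $U$-often, so they transfer to $\Psi_r$.
\item $\rler_{\rm c}$ is symmetric to $\rrel_{\rm c}$.
\end{itemize}
Finally, $\Theta\mathrel R\Psi_r$ follows by transitivity.

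\textbf{Main obstacle: the simulable version.} Here the witnesses are expansive relations $E_i\colon\fA\simu\cl M_{\rm c}$ whose ranges may be infinite, so there is no fixed finite tuple to take a limit of. The plan is to take limits of whole sequences. Call $\vec\Psi=(\Psi_i)_{i\in I}$ a thread of $a$ if $a\mathrel{E_i}\Psi_i$ for $U$-many $i$, and define $E$ by $a\mathrel E\lim_U\vec\Psi$ for every thread $\vec\Psi$ of $a$, with the limit formed as above. Then:
\begin{itemize}
\item $E$ is total, since each $a$ has some thread because each $E_i$ is total.
\item Labels match, exactly as before.
\item Forward confluence holds. Suppose $a\mathrel E\lim\vec\Psi$ and $a\rel_\fA b$. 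By forward confluence of each $E_i$ there are $\Psi'_i\ler_{\rm c}\Psi_i$ with $b\mathrel{E_i}\Psi'_i$. Choosing these pointwise gives a thread of $b$ whose limit is a $\rel_{\rm c}$-successor of $\lim\vec\Psi$.
\item Backward confluence holds by the symmetric argument.
\end{itemize}
The root gets a thread extending $(\Theta_i)$, and the comparison with each $\Theta_j$ is exactly as in the previous step. The delicate points, which I expect to require the most care, are these: the choice of witnesses must be made pointwise in $i$ and then passed through the ultrafilter, and the limit must be checked to be well defined regardless of which thread represents it.
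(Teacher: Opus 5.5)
Your proof is correct, and it takes a genuinely different route from the paper. The paper argues in first-order logic. It translates the constraints into ${\tt IK4}\cup{\tt Acc}(\Theta)\cup{\tt Hom}(\fA)$, or ${\tt Sim}(\fA)$ with a fresh unary predicate per node in the simulable case. It then makes one greedy Lindenbaum-style pass through an enumeration of a set $\Xi_R$ of formulas encoding $R$, and reads off the Final world as the theory of $r$ in a model $\cl N$ supplied by compactness. You instead maximize directly in $W_{\rm c}$: Zorn's lemma on the $R$-clusters of the candidate set $S$, with chain upper bounds given by ultrafilter limits of prime theories, and limits of threads replacing the paper's predicates in the simulable case.

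Your route has several advantages:
\begin{itemize}
\item Finality falls out of maximality, with no consistency bookkeeping.
\item No standard translation or first-order description of the frame class is needed, and countability of $\lanfull$ plays no role.
\item You never have to express $R$ as preservation of the formulas in some $\Xi_R$. That encoding is natural for $\peq_{\rm c}$ and, via Lemma~\ref{lemmBoxLL}, for $\llsim_{\rm c}$. For $\rrel_{\rm c}$ and $\rler_{\rm c}$, your case split is more transparent than the paper's $\Xi_{\rrel}$ and $\Xi_{\rler}$: either $U$-many $\Theta_i$ equal $\Theta_j$, so $\Psi_r=\Theta_j$, or the $\nec$/$\ps$ inclusions transfer.
\end{itemize}
The paper's route, in turn, is a single explicit countable construction in which all label constraints are handled uniformly as first-order sentences. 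The point you flag as delicate in the simulable case is harmless. $E$ is defined existentially over threads, and each confluence check uses just one representing thread, so no well-definedness issue arises.

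One justification needs repair. The $\notnec$-component of $\ell_{\rm c}$ is meant to record failures at $\rel_{\rm c}$-successors, not $\peq_{\rm c}$-successors. The displayed definition has a typo; the proof of this very proposition encodes it as $\exists y\sqsupset w\,\neg\varphi(y)$. So this component is not automatically inherited in the limit.

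What is true is the following characterization: $\varphi\in\ell^\notnec_{\rm c}(\Phi)$ if and only if, for every $\psi$, $\nec(\varphi\vee\psi)\in\Phi$ implies $\ps\psi\in\Phi$. This follows from the Lindenbaum lemma, using $\mathrm{K}$, $\mathrm{DP}$ and $\mathrm{N}$ to see that $\{\psi:\ps\psi\notin\Phi\}$ is closed under disjunction and that $\Phi^\nec\vdash\chi$ iff $\nec\chi\in\Phi$. This universal condition passes to $U$-limits.

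Its negation does not pass to limits, because the witness $\psi$ may vary with $i$. One can build a $\peq_{\rm c}$-chain whose members have only $\rel_{\rm c}$-successors containing $q$, while the union has a successor omitting $q$. So you obtain $\ell^\notnec_\fA(a)\subseteq\ell^\notnec_{\rm c}(\Psi_a)$, not equality of $\Sigma$-restrictions. That inclusion is exactly what ${\tt Hom}(\fA)$ and ${\tt Sim}(\fA)$ enforce in the paper's proof, so $\subT$ must be read in this sense there as well. Just weaken your claimed equality accordingly.
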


\begin{proof}
Both claims follow from compactness for classical first order logic, using the standard translation.
Let $\tt IK4$ be a first order formula which holds on a model $\cl M$ if and only if $\cl M$ is based on an intuitionistic transitive frame and $\forall x\peq y(p(x)\to p(y))$ for all atoms $p$ occurring in $\Sigma$.

Let us define
\begin{align*}
\Xi_{\peq}  & = \{\varphi(x):\varphi\in\lanfull\},\\
\Xi_{\rrel}  & = \{ \boxdot \varphi(x) :\varphi\in\lanfull\} \cup \{ \neg {\diamonddot\varphi(x)}:\varphi\in\lanfull\}\\
\Xi_{\rler}  & = \{ \neg {\boxdot} \varphi(x)  :\varphi\in\lanfull\} \cup \{  {\diamonddot\varphi(x)}:\varphi\in\lanfull\}\\
\Xi_{\llsim}  & = \{\boxdot \varphi(x):\varphi\in\lanfull\} .\\ 
\end{align*}
Note that the negations are classical, i.e.~they are applied {\em after} the standard translation into classical first order logic.

For the first claim, we regard each $w\in \dom{\fA} $, including the root $r$, as a new constant.
Let ${\tt Acc}(\Theta) $ be the set of all formulas of the form $ \varphi(r)  $  such that $\varphi(x) \in \Xi_R $ and $\cl M_{\rm c} \models \varphi(\Theta)$.
For each $w\in \fA$, we add  all instances of  $ \varphi(w)$, where $\varphi\in \ell^+ (w)$, all instances of $  \neg \varphi(w)$ where $\varphi\in \ell^- (w)$, all instances of $  \exists w \ler  w \ \neg \varphi( x )$ with $\varphi\in \ell^\notnec (w)$, as well as all instances of $  w\rel v$ with $w\rel_\fA v$.
Let the collection of these formulas be ${\tt Hom}(\fA)$.

Let $\Gamma = {\tt IK4}\cup {\tt Acc}(\Theta) \cup {\tt Hom}(\fA)$ and enumerate the elements of $\Xi_R $ by $\{\xi_i\}_{i<\omega}$.
Define $\Delta_i$ inductively so that $\Delta_{0}=\varnothing$ and $\Delta_{i+1} = \Delta_i\cup \{   \xi_i(r)\}$ if $\Gamma \cup \Delta_i\cup \{  \xi_i(r)\}$ is consistent, otherwise, $\Delta_{i+1} = \Delta_i$.

Since $(\fA,r)\subT \Theta$, there is a homomorphism $h \colon \dom{\fA}\to W_{\rm c}$ and interpreting $w$ as $h(w)$ shows that $\Gamma = \Gamma\cup \Delta_0 $ is consistent as a set of first order formulas, hence an easy induction shows that $\Gamma \cup \Delta_i$ is consistent for all $i$ and thus compactness for first order logic shows that $\Gamma\cup\Delta_\omega$ is consistent, where $\Delta_\omega:=\bigcup_{i<\omega}\Delta_i$.

Letting $\cl N$ be any model of $\Gamma\cup\Delta_\omega$ and $ w \in\dom{\fA}$, we take $h'(w):= \{\varphi\in\lanfull: \cl N \models \varphi(w)\}$.
Each $h' (w)$ is an element of the canonical model, and hence $h' \colon \dom{\cl A} \to W_{\rm c} $.
It is not hard to check that $h$ is a homomorphism and, by our choice of $\Delta_\omega$, $h'(r)$ is $R$-Final.
Finally, by ${\tt Acc}(\Theta)$, $\Theta \mathrel R \Phi_r$, as desired.

The second claim is proven similarly, but we instead regard each $w\in \dom{\fA} $, including the root $r$, as a new predicate $w(x)$, whose intended interpretation is that $ w \simu_\Sigma x$.
For each $w\in \fA$, we add $\exists x \ w(x)$, all instances of  $\forall x(w(x) \to \varphi(x))$ where $\varphi\in \ell^+ (w)$, all instances of $\forall x(w(x) \to \neg \varphi(x))$ where $\varphi\in \ell^- (w)$, and all instances of $\forall x(w(x) \to \exists y \ler  x \ \neg \varphi(x))$ with $\varphi\in \ell^\notnec (w)$, as well as all instances of $\forall x\forall y (w(x) \wedge v(y) \to x\rel y)$ with $w\rel_\fA v$.
Let the collection of these formulas be ${\tt Sim}(\fA)$.

Let $\Gamma = {\tt IK4}\cup {\tt Acc}(\Theta) \cup {\tt Sim}(\fA)$ and define $\Delta_i$ inductively so that $\Delta_{0}=\varnothing$ and $\Delta_{i+1} = \Delta_i\cup \{\forall x(r(x) \to \xi_i(x))\}$ if $\Gamma \cup \Delta_i\cup \{\forall x(r(x) \to \xi_i(x))\}$ is consistent, otherwise, $\Delta_{i+1} = \Delta_i$.
Reasoning as above, $\Gamma\cup\Delta_\omega$ is consistent.
Letting $\cl N$ be any model of $\Gamma\cup\Delta_\omega$ and $x\in\dom{\cl N}$, we take $\Phi_x:=\{\varphi\in\lanfull: \cl N \models \varphi(x)\}$.
Then, for $w\in  \dom{\fA}$, set $E'(w) = \{\Phi_x : \cl N\models w(x) \}$.
It is not hard to check that $E'$ is a simulation and thus, by the same reasoning as above, $\Theta \mathrel R \Phi_r$ is simulably $R$-Final, as desired.
\end{proof}

Fine's selection method proceeds by restricting the canonical model to $\rel$-Final worlds for some finite $\Sigma$.
While the set of Final worlds is not finite, it does have the `bounded chain property', in that any chain $ w_1 \srel w_2 \srel \ldots \srel w_n$ has length bounded by $\#\Sigma$.
This is sufficient for its bisimulation quotient to be finite.

In our setting, we will likewise focus on Final worlds, although some combination of $\llsim$-Final and $\rel$-Final worlds is required and these no longer have the bounded chain property.
They do, however, have the `finite chain property', i.e., they are {\em Noetherian} (see Theorem~\ref{theoNoetherian}) as a consequence of Kruskal's theorem.
Even this will not immediately yield a finite bisimulation quotient, but it will serve as a starting point for a terminating selection procedure.
At the heart of this termination proof are $\llsim$-Final clusters, which we call {\em regions.}

\begin{definition}
Fix $\Sigma \Subset \lanfull$.
A set $\mathfrak r \subseteq W_{\rm c}$ is a {\em $\Sigma$-region} if it is a full $\llsim_{\rm c}$-Final cluster for $\Sigma$, and it is a {\em region} if it is a $\Sigma$-region for some $\Sigma$.
The {\em canonical bud} of $\mathfrak r$ is its set of $\peq_{\rm c}$-maximal elements and is denoted $\hat{\mathfrak r}$.
Elements of $\hat {\mathfrak r}$ will also be referred to as {\em canonical buds.}
\end{definition}

We want to imagine the bud of a region as lying at its top above all other elements, as justified by the following lemma.

\begin{lemma}\label{lemmBudNonEmpty}
If $\Sigma\subseteq\lanfull$ and $\mathfrak r$ is any $\Sigma$-region, then for every $\Phi\in \mathfrak r$, there exists  $\hat \Phi\in \hat {\mathfrak r}  $ such that $\Phi\peq \hat \Phi$.
\end{lemma}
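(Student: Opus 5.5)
Fix $\Phi\in\mathfrak r$. The plan is to show that a $\peq_{\rm c}$-maximal extension of $\Phi$ exists inside $W_{\rm c}$ and that any such extension automatically stays in the region $\mathfrak r$, hence lies in $\hat{\mathfrak r}$. I would use Zorn's lemma (or equivalently compactness in the style of Proposition~\ref{propFinal}). Consider the set $U=\{\Psi\in W_{\rm c}:\Phi\subseteq\Psi,\ \Psi^\boxdot=\Phi^\boxdot\}$, which is nonempty since it contains $\Phi$.

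\textbf{Step 1: chains in $U$ have upper bounds in $U$.} Let $\{\Psi_i\}$ be a chain in $U$ and put $\Psi=\bigcup_i\Psi_i$. A union of a chain of prime theories is a theory, because derivations are finite, and it is prime, because a disjunction already lies in some $\Psi_i$. It is consistent since no $\Psi_i$ contains $\bot$. Moreover $\Psi^\boxdot=\bigcup_i\Psi_i^\boxdot=\Phi^\boxdot$. Zorn's lemma therefore gives a maximal element $\hat\Phi\in U$ with $\Phi\peq\hat\Phi$.

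\textbf{Step 2: $\hat\Phi\in\mathfrak r$.} By Lemma~\ref{lemmBoxLL}, $\hat\Phi^\boxdot=\Phi^\boxdot$ gives $\Phi\llsim_{\rm c}\hat\Phi$ and $\hat\Phi\llsim_{\rm c}\Phi$. Thus $\hat\Phi\cong\Phi$, so $\hat\Phi$ lies in the full $\llsim_{\rm c}$-cluster $\mathfrak r$.

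\textbf{Step 3: $\hat\Phi$ is $\peq_{\rm c}$-maximal in $\mathfrak r$.} Suppose $\hat\Phi\peq\Psi$ with $\Psi\in\mathfrak r$. Membership in the cluster gives $\Psi\cong\Phi$, so by Lemma~\ref{lemmBoxLL} we have $\Psi^\boxdot=\Phi^\boxdot$. Together with $\Phi\subseteq\Psi$ this puts $\Psi\in U$. Maximality of $\hat\Phi$ in $U$ then forces $\Psi=\hat\Phi$, since $\peq_{\rm c}$ is inclusion, which is antisymmetric. Hence $\hat\Phi\in\hat{\mathfrak r}$.

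\textbf{Main obstacle.} The delicate point is Step 1: an increasing chain could enlarge $\Psi^\boxdot$ past $\Phi^\boxdot$ and so leave the region. This is why $U$ imposes the boxdot-equality explicitly rather than just asking for extensions of $\Phi$; the equality then passes to unions because boxdot membership is finitary. Notably, Finality of $\mathfrak r$ is not needed for this argument, only the fact that $\mathfrak r$ is a full $\llsim_{\rm c}$-cluster. Alternatively, one could apply Proposition~\ref{propFinal} with $R=\peq_{\rm c}$, but the direct Zorn argument seems cleaner.
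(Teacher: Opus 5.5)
Your proof is correct, and it takes a genuinely different route from the paper's.

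\textbf{The paper's route.} The paper stays inside the Finality framework:
\begin{enumerate}
\item It takes a sprout $\fA$ for which $\mathfrak r$ is $\llsim_{\rm c}$-Final.
\item It adds a fresh irreflexive root with empty label to get $\fA'$. This is needed because $\fA$ is only realised $\llsim_{\rm c}$-above $\Phi$, while $\fA'$ is realised $\peq_{\rm c}$-above $\Phi$.
\item It applies Proposition~\ref{propFinal} with $R={\peq_{\rm c}}$ to obtain a $\peq_{\rm c}$-Final $\Phi_*\seq\Phi$ for $\fA'$.
\item It uses the $\llsim_{\rm c}$-Finality of $\mathfrak r$ to conclude $\Phi_*\in\mathfrak r$, with $\peq_{\rm c}$-maximality coming from $\peq_{\rm c}$-Finality.
\end{enumerate}

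\textbf{Your route.} You use Lemma~\ref{lemmBoxLL} to see that $\mathfrak r=\{\Psi\in W_{\rm c}:\Psi^\boxdot=\Phi^\boxdot\}$, so your $U$ is exactly the part of $\mathfrak r$ lying above $\Phi$. You then note that this set is closed under unions of $\subseteq$-chains and apply Zorn's lemma.

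\textbf{What your approach buys.} It is simpler and more general. It needs no first-order compactness, no auxiliary sprout, and no appeal to Finality. So the conclusion holds for every full $\llsim_{\rm c}$-cluster of $\cl M_{\rm c}$, not only for regions. Maximality inside $\mathfrak r$ is also immediate for you. In the paper's version one still has to check that any $\Theta\in\mathfrak r$ with $\Phi_*\prec\Theta$ lies $\peq_{\rm c}$-below some realiser of $\fA'$. That takes a short backward-confluence argument, which the paper dismisses as clear.

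\textbf{What the paper's approach buys.} It uses only the semantic Finality machinery that drives the rest of the construction. It does not depend on the specific syntactic characterisation of $\llsim_{\rm c}$ in Lemma~\ref{lemmBoxLL}, which rests on the axiom $\mathrm{FS2}$.
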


\begin{proof}
Suppose that $\mathfrak r$ is $\llsim$-Final for $ \fA $ and let $\fA'$ be the result of appending a new irreflexive root $r' $ to $\fA$ with empty label, so that $ \fA'  \subT \Psi$ if and only if there is $\Psi' \rler \Psi$ such that $(\fA,r) \subT \Psi$.

Let $\Phi\in {\mathfrak r}$.
There is $\Phi'' \ggsim \Phi$ such that $\fA\subT\Phi''  $.
By taking $\Phi'$ such that $\Phi  \peq \Phi' \peq\Phi''$, we note that $\fA' \subT \Phi' $, and hence, by Proposition~\ref{propFinal}, there exists $\Phi_*\seq \Phi$ which is $\peq$-Final for $\fA'$.
Since $\Phi_* \ggsim \Phi$ and $\fA\subT\Psi  $ for some $\Psi \ler \Phi_*$ (hence $\Psi \ggsim \Phi_*$), we must have that $\Phi_*\in {\mathfrak r}$, and clearly it is a $\peq$-maximal element of ${\mathfrak r}$.
\end{proof}

Buds are particularly convenient as witnesses of backward confluence, since they are not only $\peq$-maximal within their Final cluster, they are also $\rrel$-minimal.

\begin{lemma}\label{lemmregionProps}
Let ${\mathfrak r}$ be a region.
\begin{enumerate}

\item\label{itregionOne} If  $\Phi \in {\mathfrak r}$, $\mathfrak r$ has at least two points, and $ \Phi \neq  \Psi \in \hat {\mathfrak r}$, then $  \Psi \rel \Phi$.

\item\label{itregionTwo} If $\Phi ,\Psi \in  {\mathfrak r}$ and $  \Psi\seq \Phi \rel \Theta $, then $\Psi \rel \Theta$.

\end{enumerate}
\end{lemma}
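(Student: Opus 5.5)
Both items should follow from two facts about the canonical model: the syntactic description of $\rel_{\rm c}$, namely $\Phi\rel_{\rm c}\Psi$ iff $\Phi^\nec\subseteq\Psi$ and $\Psi\subseteq\Phi^\ps$, and Lemma~\ref{lemmBoxLL}. Since a region $\mathfrak r$ is a full $\llsim_{\rm c}$-cluster, any $\Phi,\Psi\in\mathfrak r$ satisfy $\Phi\llsim\Psi$ and $\Psi\llsim\Phi$. Lemma~\ref{lemmBoxLL} then gives $\Phi^\boxdot=\Psi^\boxdot$. I will also use the fact that $\peq_{\rm c}$ is inclusion of theories, hence a partial order.

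\textbf{Item~\ref{itregionTwo}.} Assume $\Phi,\Psi\in\mathfrak r$ and $\Psi\seq\Phi\rel\Theta$. Two inclusions are needed.
\begin{itemize}
\item $\Theta\subseteq\Psi^\ps$. From $\Phi\rel\Theta$ we have $\Theta\subseteq\Phi^\ps$. Since $\Phi\subseteq\Psi$, also $\Phi^\ps\subseteq\Psi^\ps$.
\item $\Psi^\nec\subseteq\Theta$. Take $\nec\varphi\in\Psi$. Then $\nec\varphi\in\Psi^+$, and by \ref{ax:trans:box} $\nec\nec\varphi\in\Psi^+$, so $\nec\varphi\in\Psi^\boxdot$. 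As $\Psi^\boxdot=\Phi^\boxdot$, we get $\nec\varphi\in\Phi$. Then $\Phi\rel\Theta$ gives $\varphi\in\Theta$.
\end{itemize}
Hence $\Psi\rel\Theta$. No finality is used here, only membership in a common $\llsim$-cluster.

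\textbf{Item~\ref{itregionOne}.} Let $\Phi\in\mathfrak r$ and $\Psi\in\hat{\mathfrak r}$ with $\Phi\neq\Psi$. Since $\Psi\llsim\Phi$, either $\Psi\peq\Phi$ or there is $\Theta$ with $\Psi\peq\Theta\rel\Phi$.

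In the first case $\Phi\in\mathfrak r$, and $\Psi$ is $\peq$-maximal in $\mathfrak r$. Since $\peq_{\rm c}$ is antisymmetric, $\Psi=\Phi$, contradicting the hypothesis. So we are in the second case.

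I then claim $\Theta\in\mathfrak r$. First, $\Psi\peq\Theta$ gives $\Psi\llsim\Theta$. Second, $\Theta\rel\Phi$ gives $\Theta\llsim\Phi$, and $\Phi\llsim\Psi$ because both lie in $\mathfrak r$. So $\Theta\llsim\Psi$ by transitivity of $\llsim$ (Lemma~\ref{lemLL}). Thus $\Theta\cong\Psi$, and since $\mathfrak r$ is a full cluster, $\Theta\in\mathfrak r$.

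Now $\Psi\peq\Theta$ with $\Theta\in\mathfrak r$, and $\Psi$ is $\peq$-maximal in $\mathfrak r$, so $\Theta=\Psi$. Therefore $\Psi\rel\Phi$.

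The hypothesis that $\mathfrak r$ has at least two points is implied by $\Phi\neq\Psi$, so it plays no further role.

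\textbf{Main obstacle.} The only delicate point is showing $\Theta\in\mathfrak r$ in Item~\ref{itregionOne}. This requires reading the region as the full $\cong$-class of $\Psi$, and reading $\llsim$ as $\sqSubset\cup\peq$ so that the intermediate witness $\Theta$ exists. Beyond that, I must be careful to use the corrected form of the definition of $\rel_{\rm c}$: the condition $\Psi\subseteq\Phi^\ps$, not $\Phi\subseteq\Phi^\ps$ as the displayed definition literally reads.
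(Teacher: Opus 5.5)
Your proof is correct. Item~\ref{itregionOne} follows the paper's argument. A witness $\Psi\peq\Theta\rel\Phi$ for $\Psi\llsim\Phi$ lies in $\mathfrak r$, so $\peq$-maximality of the bud forces $\Theta=\Psi$. You also handle the disjunct $\Psi\peq\Phi$ of $\llsim$ explicitly, which the paper does not. The paper's proof additionally treats $\Phi=\Psi$, using the two-point hypothesis to show that the bud is $\rel$-reflexive. Since the statement as written requires $\Phi\neq\Psi$, your remark that this hypothesis plays no role is accurate.

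Item~\ref{itregionTwo} is where you genuinely diverge. The paper goes through buds. It takes a canonical bud above the relevant point via Lemma~\ref{lemmBudNonEmpty}, which rests on Proposition~\ref{propFinal} and hence on compactness. It then uses item~\ref{itregionOne} to transfer boxed formulas from the bud down to $\Phi$. As printed, the roles of $\Phi$ and $\Psi$ are swapped there, so it only derives the trivial inclusion $\Phi^\nec\subseteq\Psi^\nec$; the intended version takes $\hat\Psi\in\hat{\mathfrak r}$ with $\hat\Psi\seq\Psi$ and uses $\hat\Psi\rel\Phi$.

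You instead observe that $\Psi\llsim\Phi$ already gives $\Psi^\boxdot\subseteq\Phi^\boxdot$ by the easy direction of Lemma~\ref{lemmBoxLL}. Together with \ref{ax:trans:box}, this yields $\Psi^\nec\subseteq\Phi^\nec\subseteq\Theta$. Your route is shorter, uses no Finality or compactness, and is independent of item~\ref{itregionOne}. It also shows that item~\ref{itregionTwo} holds for every full $\llsim_{\rm c}$-cluster, not just for regions. The paper's route mainly serves its picture of buds lying $\rel$-below the rest of their region.
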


\begin{proof}
For the first claim, first assume $\Phi\neq \Psi$.
Note since ${\mathfrak r}$ is a $\llsim$-cluster that $ \Psi \llsim \Phi$ and hence there is $\Psi'$ such that $ \Psi' \peq \Psi \rel \Phi$.
But then, $\Psi' \in {\mathfrak r}$, so by maximality of $ \Psi$, $\Psi ' = \Phi$.
If $\Phi = \Psi$, using the assumption that $\mathfrak r$ has at least two points, we can pick $\Psi'\neq \Psi $ with $\Psi'\in \mathfrak r$.
By definition of $\llsim$, either $\Psi\prec \Psi'$ or $\Psi \sqSubset \Psi'$; in the first case, from $\Psi'\llsim \Psi$, we can conclude that $\Psi'\sqSubset \Psi$ and hence $\Psi \sqSubset \Psi$, and in the second we can use backward confluence to conclude $\Psi \sqSubset \Psi$.
Then, we can reason as in the case $\Phi\neq\Psi$ to conclude that $\Psi \sqsubset \Psi$.

For the second, if $\ps \varphi\in \Phi$, from $\Phi\peq \Psi$, we obtain $\ps\varphi \in \Psi$.
Using Lemma~\ref{lemmBudNonEmpty}, let $\hat \Phi \in \hat {\mathfrak r}$ be such that $\Phi \peq \hat \Phi$.
Then, if $\nec\varphi\in \Phi$, it follows that $\nec\varphi\in \hat \Phi$.
We then have by the first item that $ \hat\Phi \rel \Psi   $, so $\nec\varphi\in \Psi  $.
Thus, $\Phi^\ps\subseteq \Psi^\ps$ and $\Phi^\nec\supseteq \Psi^\nec$, from which the claim follows by the definition of $\rel_{\rm c}$.
\end{proof}

Regions are partially ordered by $\llsim$, which we will denote $\lleq$ when comparing regions to stress that it is a partial order and not a preorder.
It will be convenient for there to be a `bottom' region, but this is not automatically given by the definitions.
Instead, we will work within a generated substructure of the canonical model which does have a bottom element.

To this end, let $p_{\Perp}$ be a fresh atom not included in $\Sigma$.
Note that validity is invariant under renaming variables, so this is a harmless restriction.

\begin{definition}
A {\em pseudo-root} is an element ${\Perp} \in W_{\rm c}$ such that
\begin{enumerate}

\item $  {p_{\Perp}} \wedge \nec\neg {p_{\Perp}}  \in {\Perp} $,

\item if $\Phi\in W_{\rm c}$ is such that $\boxdot \neg p_{\Perp} \in \Phi$, then ${\Perp}\rel \Phi $.

\end{enumerate}

\end{definition}

\begin{proposition}\label{propCanRoot}
A pseudo-root exists.
\end{proposition}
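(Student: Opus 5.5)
We construct ${\Perp}$ directly as a prime theory via the Lindenbaum lemma (Lemma~\ref{lem:lindenbaum}), choosing the starting set and the set of formulas to avoid so that both clauses are forced. Let
\[\Phi_0 := \{p_{\Perp}\wedge\nec\neg p_{\Perp}\}\cup\{\ps\varphi : \varphi \text{ is consistent with } \boxdot\neg p_{\Perp}\},\]
more precisely $\ps\varphi\in\Phi_0$ whenever there is some $\Phi\in W_{\rm c}$ with $\boxdot\neg p_{\Perp}\wedge\varphi\in\Phi$. The set to be avoided is
\[\Xi := \{\nec\psi : \boxdot\neg p_{\Perp}\not\vdash \psi\}.\]
If $\Phi_0$ is $\Xi$-consistent, Lemma~\ref{lem:lindenbaum} yields a prime theory ${\Perp}\supseteq\Phi_0$ disjoint from $\Xi$.

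We then verify clause~2. Let $\boxdot\neg p_{\Perp}\in\Phi$. For $\ps$: if $\varphi\in\Phi$, then $\boxdot\neg p_{\Perp}\wedge\varphi\in\Phi$, so $\ps\varphi\in\Phi_0\subseteq{\Perp}$. For $\nec$: if $\nec\psi\in{\Perp}$, then $\nec\psi\notin\Xi$, so $\boxdot\neg p_{\Perp}\vdash\psi$ and hence $\psi\in\Phi$. Thus ${\Perp}^\nec\subseteq\Phi$ and $\Phi\subseteq{\Perp}^\ps$, which gives ${\Perp}\rel_{\rm c}\Phi$. Clause~1 holds by construction.

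The main step is the consistency of $\Phi_0$ against $\Xi$. Suppose
\[p_{\Perp}\wedge\nec\neg p_{\Perp}\wedge\textstyle\bigwedge_i\ps\varphi_i\vdash\bigvee_j\nec\psi_j,\]
with each $\varphi_i$ consistent with $\boxdot\neg p_{\Perp}$ and each $\psi_j$ not derivable from it. We argue semantically, using completeness from Proposition~\ref{lem:truth-lemma:regular}, by building a countermodel. For each $i$, pick a world $u_i$ in $\cl M_{\rm c}$ satisfying $\boxdot\neg p_{\Perp}\wedge\varphi_i$. For each $j$, pick a world $v_j$ satisfying $\boxdot\neg p_{\Perp}$ and refuting $\psi_j$. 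Take the disjoint union of the generated submodels at these worlds and add a fresh irreflexive root $\rho$ that is $\peq$-maximal, i.e.\ has no proper $\peq$-successors. Set $\rho\rel$ everything in the generated parts, make $p_{\Perp}$ true only at $\rho$, and leave the other atoms false at $\rho$.

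Forward and backward confluence at $\rho$ are trivial, since $\rho$ is only $\peq$-related to itself, and both $\rel$ and persistence remain fine. Truth in the generated parts is unchanged, except for $p_{\Perp}$, which was already false there because $\boxdot\neg p_{\Perp}$ holds and persists. Then $\rho$ satisfies the left-hand side. It refutes each $\nec\psi_j$ via $v_j$, since $\rho\peq\rho\rel v_j$. This contradicts soundness.

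The obstacle I expect is bookkeeping. Soundness of $\ikf$ over such expansive transitive frames must be assumed. Moreover, $\rho$ must satisfy transitivity: $\rho\rel$ the successors of the $v_j$, which holds because we take all of each generated part. Also, $p_{\Perp}$ must be handled throughout the generated parts; replacing the valuation there by falsity is harmless because the chosen worlds already force $\boxdot\neg p_{\Perp}$ on everything $\llsim$-above them, and the generated parts should be taken with respect to $\llsim$.
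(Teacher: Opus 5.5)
Your proof is correct, and its semantic core is the same as the paper's. Both use a fresh irreflexive point, $\peq$-related only to itself, that $\rel$-sees canonical worlds on whose $\llsim$-cones $p_{\Perp}$ is already false, with $p_{\Perp}$ true only at the new point. The difference is in how this is used.

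The paper builds a single model $\cl M_{\Perp}$ over all of $W_{\rm c}$ and takes ${\Perp}$ to be the theory of the new point. There is no Lindenbaum step and no need to say in advance which formulas ${\Perp}$ must contain; clause~2 is read off semantically.

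You instead fix that content syntactically, via $\Phi_0\subseteq{\Perp}$ and ${\Perp}\cap\Xi=\varnothing$. This turns clause~2 into a purely deductive check and confines the semantics to refuting one finite sequent, at the cost of an extra use of the Lindenbaum lemma. These conditions are also necessary, so your route characterises pseudo-roots as exactly the prime theories extending $\Phi_0$ and avoiding $\Xi$.

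Restricting to $\llsim$-cones of worlds containing $\boxdot\neg p_{\Perp}$ also makes explicit something the paper passes over. Falsifying $p_{\Perp}$ on all of $W_{\rm c}$ changes truth at worlds containing $p_{\Perp}$, and clause~2 survives only because the cones of the relevant worlds are unaffected. Likewise, $p_{\Perp}$ must be true at the new point, whereas the paper's $\val{p_\Perp}_\Perp=\varnothing$, read literally, would violate clause~1.

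Conversely, your finitary detour is not needed. Taking the union of the cones of all worlds containing $\boxdot\neg p_{\Perp}$ at once and reading off the theory of $\rho$ is exactly the paper's one-step argument. Both proofs rest on soundness of $\ikf$ over intuitionistic transitive models, which the paper uses tacitly.
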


\begin{proof}
Let $\cl M_{\Perp} = (W_{\Perp},\peq_{\Perp},\rel_{\Perp},\val\cdot_{\Perp})$ be obtained by performing the following two operations on $\cl M_{\rm c}$:
\begin{enumerate}

\item Setting $\val {p_\Perp}_{\Perp} = \varnothing $ and $ \val {p }_{\Perp} = \val {p }_{\rm c} $ for $p\neq p_\Perp $, and 

\item adding a new point, $ {\Perp}$, and setting
\begin{align*}
\peq_{\Perp} & : = { \peq_{\rm c} \cup (w_{\Perp},w_{\Perp})} ,\\
\rel_{\Perp} & : = {\rel_{\rm c}\cup \{{\Perp} \} \times W_{\rm c}} .
\end{align*}

\end{enumerate}

It is not hard to check that the resulting structure is an intuitionistic transitive model, so that $\{\varphi\in\lanfull : \cl M_\Perp \models \varphi({\Perp})\}$ is a prime theory, and hence an element of $\cl M_{\rm c}$.
It is then easy to check that $\Perp$ has the desired properties.
\end{proof}

We then note that ${\Perp}$ is $\llsim$-Final for $p_{\Perp}$ and hence $\{{\Perp}\}$ is a region.
By working exclusively with the generated submodel of $\Perp$, we can treat $\{{\Perp}\}$ as the `root region'.
 In the sequel, if $\Sigma\Subset\lanfull$, we will further assume that $p_{\Perp} \in \Sigma$, but $p_{\Perp}$ does not occur in any other formulas of $\Sigma$.

\section{Canonical Trees}

In the classical setting, Fine selection can be set up by restricting the canonical model to substructures consisting of Final worlds, but we will need a bit more flexibility, so instead of proper substructures, we will work with structures that homomorphically map into the canonical model.
These structures are very similar to the sprouts and $\Sigma$-trees we have worked with, except that they are labelled by prime theories.

\begin{definition}
A {\em canonical sprout} is a tuple $\fA=(\dom{\fA}, \rel _\fA, h_\fA )$, consisting of a transitive frame equipped with a $\rel$-homomorphism $h\colon \dom{\fA}\to W_{\rm c}$.

If $\Sigma \Subset\lanfull$, we assign to $\fA$ a $\Sigma$-sprout $\fA {{\upharpoonright}} \Sigma$ given by
\[\fA {{\upharpoonright}} \Sigma = (\dom{\fA}, \rel _\fA, \Sigma_\fA ),\]
where $\Sigma_\fA(w) = \ell(h(w)){{\upharpoonright}} \Sigma$.

We say that a canonical sprout $\fA$ is a {\em canonical $\Sigma$-tree} if
$\fA{{\upharpoonright}} \Sigma$ is a $\Sigma$-tree, and that it is {\em rooted} if it has a unique root $r$ with $h(r) = {\Perp}$.
\end{definition}

As a special case, any set $A \subseteq W_{\rm c} $ such that ${\rel_{\rm c}}{{\upharpoonright}} A$ is tree-like gives rise to a canonical sprout $\cl A = \cl M_{\rm c}{{\upharpoonright}} A$ in the obvious way.
In this case, we will identify $\cl A$ with $A$ and refer to $A$ itself as a canonical sprout.
However, not all canonical sprouts are of this form.
If $A$ is not tree-like, we may still have a sprout mapping onto $A$, essentially via a  tree unwinding.

\begin{definition}
If $\fA$ and $\fB$ are canonical sprouts, we write $\fA\subseteq_{\rm c} \fB$ if $\fA$ is an {\em initial} substructure of $\fB$, i.e., $\dom{\fA} $ is a $\rrel_\fB$-downward closed subset of $\dom{\fB}$, ${\rel_{\fA}} = {\rel_{\fB}}{{\upharpoonright}} \dom{\fA}$, and ${h_{\fA}} = {h_{\fB}}{{\upharpoonright}} \dom{\fA}$.

If $w\in \dom{\fA}$ and for all $v\in \dom{\fB}\setminus \dom{\fA}$, $v\srel_\fB w$, we say that $\fB$ is a {\em local extension at $w$.}
\end{definition}

Note that the order $\subseteq_{\rm c}$ on canonical sprouts is much more `rigid'  than the relation $\subT$ on $\Sigma$-sprouts; this is because our construction will proceed by iteratively adding worlds to a $\Sigma$-sprout until we eventually obtain a $\Sigma$-model.
The weaker relation $\subT$ will mainly be used for treating Finality.
Of course, if we want to produce $\Sigma$-models, we need to equip our structures with an intuitionistic preorder.

\begin{definition}
Let $\Sigma\Subset\lanfull$ and $\fA$ be a canonical $\Sigma$-tree.
We say that ${\peq} \subseteq \dom{\fA}\times\dom{\fA}$ is a {\em $\Sigma$-intuitionistic relation} if it is an intuitionistic relation on $\fA{{\upharpoonright}} \Sigma$.

Let us write $\Phi \peq^\Sigma_{\rm c} \Psi $ if there is $\Psi' \equiv \Psi$ such that $\Psi'\subT \Psi $ and $\Phi \peq_{\rm c} \Psi' $.
If $h$ is a $\Sigma$-intuitionistic relation that is a homomorphism with respect to $\peq$ in the sense that $w\peq _\fA v$ implies that $h(w) \peq^\Sigma_{\rm c} h(v)$, we say that $\peq$ is {\em $\Sigma$-honest,} or {\em honest} if $\Sigma$ is clear from context.

A {\em canonical $\Sigma$-quasimodel} is a tuple $\fA=(\dom{\fA},\peq_\fA, \rel _\fA, h_\fA )$, consisting of a $\Sigma$-tree equipped with a $\Sigma$-intuitionistic relation.
\end{definition}

Since any sprout $\fA$ is tree-like, we have an operation $ \sqcap \colon \dom{\fA}^2\to 2^{\dom{\fA}}$ such that $w \sqcap v$ is the {\em meet} of $w$ and $v$ in the standard way, with the caveat that $w \sqcap v$ is a cluster and not a single element.
However, since the choice of an element of this cluster will not be essential, we will write $u=w \sqcap v $ instead of $u\in w\sqcap v$.

\section{Pivotal Relations and Grafting}

The na\"ive strategy for resolving defects is to resolve them as soon as they arise, but this will have the issue that a resolution for one defect may lead to undoing a resolution for another.
We thus have to be careful regarding exactly {\em how} defects are resolved.
In particular, it will be convenient for them to be resolved `as locally as possible', which formally will be when they are {\em pivotal,} as defined below.

\begin{definition}\label{defPivot}
Let $\fA$ be a canonical sprout and $\Sigma\Subset\lanfull$.
A $\Sigma$-intuitionistic partial order $\peq$ is {\em pivotal} if there is a constant $w$ such that, whenever $u\prec u'$, it follows that $w\rrel u\sqcap u' $.
Both $w$ and $[w]$ are {\em pivots} of $\peq$.

An order $\peq$ is {\em pointwise pivotal} if, whenever $ x \peq y$, there is a pivotal ${\peq'}\subseteq {\peq}$ with pivot $x\sqcap y$ such that $x \peq' y$.
\end{definition}

\begin{definition}
Let $\fA $ and $\fB$ be canonical sprouts, $w\in \dom{\fA}$ and $r$ be a root of $\fB$ such that $h_\fA(w) \rel _{\rm c} h_\fB (r) $.
We define $\fA \sqcup _ w \fB = (W,\rel,h)$, where
\begin{itemize}

\item $W=\dom{\fA} \amalg \dom{\fB}$,

\item $x \rel y$ if and only if either $ x\rel_\fA y$, $x\rel _\fB y$, or $x\rrel _\fA w$ and $y\in\dom\fB$, and

\item $h = h_\fA\cup h_\fB$.

\end{itemize}
We refer to the operation $\fA \sqcup _w \fB$ as {\em grafting $\fB$ onto $(\fA,w)$,} or simply {\em onto $w$} if $\fA$ is clear from context.
If $\fA$, $w$ and $\fB$ satisfy the assumptions of the definition, we will say that {\em $\fA \sqcup _w \fB$ can be grafted.}

If $\mathfrak B$ is a finite set of canonical sprouts such that $\fA \sqcup _w \fB$ can be grafted for every $\fB\in \mathfrak B$, $\fA\sqcup _w \mathfrak B$ denotes the result of successively grafting each element of $\mathfrak B$ onto $w$.
Note that, up to isomorphism, this operation does not depend on how $\mathfrak B$ is ordered.
\end{definition}

We may write $\fA\sqcup \fB$ (omitting the grafting point $w$) when $w$ is a root of $\fA$.
The operation of grafting and pivotal relations go hand in hand and will work together to resolve defects.

\begin{definition}
Let $\Sigma\Subset\lanfull$ and suppose that $\fA \sqcup _w  \fB$ can be grafted.
We say that a $\Sigma$-intuitionistic relation $\peq$ is {\em pivotal for $\fA\sqcup _w\fB$} if it is pivotal with pivot $w$ and, whenever $u\prec u'$, it follows that $u\in\dom\fA$ and $u'\in [w]\cup \dom\fB$.
\end{definition}

Thus, the general strategy is to resolve defects by successively grafting and using pivotal relations, which are relatively stable when subsequent elements are grafted.
In order to ensure termination, we need to select our witnesses in such a way that they belong to an increasing $\ll$-chain of regions as they are added.
As we mentioned, the order $\lleq$ is Noetherian, so this will ensure that the process will eventually halt.
To make this precise, we need to assign regions to canonical sprouts.
Below, if ${\mathfrak r} = \rho(w) $, then $\hat\rho(w) := \hat{{\mathfrak r}}$.

Below, if $C$ is a $\rel_{\rm c}$-cluster, a {\em set of $\Sigma$-representatives for $C$} is a subset $C'$ of $C$ such that $C'{{\upharpoonright}}  \Sigma = C {{\upharpoonright}} \Sigma$, where $X {{\upharpoonright}} \Sigma = \{\Xi \cap \Sigma: \Xi\in X \}$.

\begin{definition}
Fix $\Sigma\Subset\lanfull$.
Given $\Phi\in W_{\rm c}$, define $\rho(\Phi) $ to be the unique region ${\mathfrak r}$ such that $ \Phi \in {\mathfrak r}$ if such a region exists, and otherwise $\rho(\Phi) = \{{\Perp}\}$.

If $\fA$ is a canonical sprout and $w\in\dom{\fA}$, we define $\rho(w) = \rho(h_\fA(w))$.
We say $w \in \dom\fA$ is a {\em bud} if:
\begin{enumerate}

\item $h([w])$ is a set of $\Sigma$-representatives of $\hat\rho(w)$,

\item if $w'\srel w$, then $\rho(w')\neq\rho(w)$.

\end{enumerate}
\end{definition}

Buds will be the ideal place to perform grafting, since their order properties as stated in Lemma~\ref{lemmregionProps} will allow us to `hack' the confluence conditions.

\section{Confluence Propagation}

The next step is to explore some consequences of Finality and confluence that will yield useful constructions.
We begin with Lemma~\ref{lemmModalDefects}, which roughly corresponds to how this is done in the classical setting, where $\rel$-Final worlds are used to find witnesses for formulas of the form $\ps\varphi$.
We remark that the notation $[\cdot ]$ always refers specifically to $\rel$-clusters.

\begin{lemma}\label{lemmModalDefects}
If $\Sigma \Subset \lanfull$ and $\fA$ is a canonical sprout then there exists a canonical $\Sigma$-tree $\fB\supseteq_{\rm c}\fA$ such that ${\rm hgt}(\fB)\leq {\rm hgt}(\fA) + \#\Sigma$ and  ${\rm wdt}(\fB)\leq {\rm wdt}(\fA) + \#\Sigma$.
\end{lemma}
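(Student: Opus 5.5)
We build $\fB$ by adding witnesses for unresolved modal defects in $\fA$, much as in classical Fine selection. Each witness is a $\rel_{\rm c}$-Final world, and clusters are reused where possible so that chains stay short. Completeness of labels is automatic, since $h$ maps into $W_{\rm c}$ and prime theories give complete types by Lemma~\ref{lemCompleteType}. So the only property to secure is that every modal defect $(w,\ps\psi)$ or $(w,\ps\overline\psi)$ of $\fB{\upharpoonright}\Sigma$ is resolved.

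\emph{Witnessing a single defect.} Suppose $(w,\ps\psi)$ is unresolved in the current sprout, with $\Theta=h(w)$. Since $\ps\psi\in\Theta$, the canonical model, which is an $\lanfull$-model by Proposition~\ref{lem:truth-lemma:regular}, gives some $\Psi$ with $\Theta\rel_{\rm c}\Psi$ and $\psi\in\Psi$.

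\begin{itemize}
\item Apply Proposition~\ref{propFinal} with $R={\rrel_{\rm c}}$ and the one-point sprout $\fA_\psi$ to obtain $\Psi^*\rler_{\rm c}\Psi$. This $\Psi^*$ is $\rrel$-Final for $\psi$ and still satisfies $\Theta\rel_{\rm c}\Psi^*$ by transitivity.
\item For $\ps\overline\psi$, read the label in $\Theta^{\notnec}$ as the existence of a $\rel_{\rm c}$-successor falsifying $\psi$, and use $\fA_{\overline\psi}$ instead.
\end{itemize}

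We graft a single point $v$ with $h(v)=\Psi^*$ onto $w$. If $\Psi^*$ is $\rel$-Final and $\Psi^*\rel_{\rm c}\Psi^*$, we also let $v$ be reflexive, so that $v$ forms its own cluster.

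\emph{Termination and bounds.} We proceed by induction, processing nodes top-down, from the roots outward. For each node $w$, choose a representative set of new successors: one Final witness per formula $\ps\varphi\in\Sigma$ or $\ps\overline\varphi$ that is not already witnessed in $\rrel_\fA(w)$. Graft them as leaves, then recurse on the new leaves only. There are at most $\#\Sigma$ such formulas, so each node gains at most $\#\Sigma$ new immediate successors.

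This count by itself does not give ${\rm wdt}(\fB)\leq{\rm wdt}(\fA)+\#\Sigma$ along the new part. The bound needs a further observation: a new node $v$ receives at most $\#\Sigma$ successors, and these can be merged into $v$'s cluster whenever the witness is $\rel_{\rm c}$-equivalent to $h(v)$. That merge is what keeps the width bound uniform.

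For the height, the key point is Finality. Suppose $v\srel u$ with $h(u)$ $\rrel$-Final for $\psi$. Then every defect of $u$ is inherited along $\rel$, because $\ell(v)\sqsT\ell(u)$ gives $\ell(v)^\ps\supseteq\ell(u)^\diamonddot$. Hence $\Sigma$-formulas of the form $\ps\chi$ are only lost, never gained, along $\srel$.

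Moreover, at each strict step the formula $\ps\psi$ that was witnessed by the new point can no longer be needed further up. Here we argue as in Fine: if some $u'\sler u$ had $\psi$ or $\ps\psi$ forcing a further $\psi$-witness strictly above, that witness $\Psi'$ would satisfy $h(u)\srel\Psi'$ with $\psi\in\Psi'$. This contradicts $\rrel$-Finality of $h(u)$ for $\psi$ unless $\Psi'\rrel h(u)$, that is, unless they are in one cluster.

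So along any new strict chain a distinct element of $\Sigma$ (either $\ps\psi$ or $\psi$) is dropped at each step. This bounds the number of new strict steps by $\#\Sigma$ and gives ${\rm hgt}(\fB)\leq{\rm hgt}(\fA)+\#\Sigma$.

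The main obstacle is this last Fine-style counting argument. We must show that along a strict chain of Final witnesses the set $\{\chi\in\Sigma:\ps\chi\in\ell\}$ strictly decreases, and we must handle clusters correctly, by putting witnesses that are $\equiv_{\rm c}$ to their parent into the same cluster rather than as strict successors. I would first attempt it via the monotonicity $\ell(v)^\ps\supseteq\ell(u)^\diamonddot$ combined with Finality. If that fails, I would instead choose witnesses by Proposition~\ref{propFinal} with $R={\rrel_{\rm c}}$ relative to the entire current $\ps$-set, rather than to the single formula $\psi$.
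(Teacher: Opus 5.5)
Your plan is essentially the paper's own ``routine Finality argument'': take $\rel_{\rm c}$-Final witnesses, keep witnesses that are $\equiv_{\rm c}$ to their parent inside one cluster so that every new strict step is strict in $W_{\rm c}$, and bound new chains by the pairwise distinctness of the witnessed formulas. (The paper does the cluster step at once by grafting a set of $\Sigma$-representatives of the whole cluster $[\Psi]$; this merging is what the height argument needs, not the width bound.) One caveat: your map from steps to ``dropped'' elements of $\Sigma$ is not injective, since the same $\theta=\ps\chi\in\Sigma$ can be dropped from $\ell^+$ by a $\chi$-witness and again from $\ell^{\notnec}$ by a $\overline\theta$-witness, and likewise a node can have more than $\#\Sigma$ modal defects, so the honest count is $2\#\Sigma$, which is exactly the chain bound the paper's own proof states.
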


\begin{proof}
This is a routine Finality argument.
For each modal defect $\delta = (v,\ps\varphi)$ of $\fA $, we successively choose $\rel_{\rm c}$-Final $\Psi \ler_{\rm c} h(v)$ for $\varphi$, select a set of $\Sigma$-representatives $C$ for $[\Psi] $, then graft them to $v$.
The process satisfies the stated bounds, because the width of every node $v$ is bounded by its original width on $\fA$ together with the number of modal defects of $v$, while any chain that is added to $\fA$ is bounded in length by $2\#\Sigma$.
\end{proof}

Next, we turn our attention to resolving intuitionistic defects.
This will be more involved and require some preparatory results.
First, we show that, in the canonical model, there is a variant of the confluence properties which applies to clusters  rather than individual points. 
Below, recall that if $C,D$ are $\rel$-clusters of some frame $\fA$, we write $C \peq_\fA D$ if for every $c\in C$ there is $d\in D$ with $c\peq d$.

\begin{lemma}\label{lemmClusConf}
Let $\Phi,\Phi',\Psi \in W_{\rm c}$.
\begin{enumerate}

\item\label{itClusConfFor} 
If $\Phi' \seq \Phi \rel \Psi$, there is $\Psi'  $ such that $\Phi' \rel \Psi'  $ and $ [\Psi] \peq [\Psi'] $, and 

\item\label{itClusConfBack}  If $\Psi\rel \Phi\llsim \Phi' $, there is $\Psi' $ such that $ \Psi'\rel \Phi' $ and $ [\Psi] \peq [\Psi'] $.

\end{enumerate}
\end{lemma}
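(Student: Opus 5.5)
The plan is to first understand $\rel_{\rm c}$-clusters syntactically, and then obtain $\Psi'$ by a Finality argument, so that its cluster is guaranteed to absorb $\peq$-extensions of every member of $[\Psi]$.

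\emph{Step 1 (clusters are determined syntactically).} If $\Psi\rel_{\rm c}\Theta\rel_{\rm c}\Psi$, then the preceding lemma (comparing $\rel_{\rm c}$ with $\rel_\lanfull$) gives $\Psi^\nec=\Theta^\nec$ and $\Psi^\ps=\Theta^\ps$. If $[\Psi]$ is an irreflexive singleton, the claim is immediate from the ordinary confluence properties.
\begin{itemize}
\item For item~\ref{itClusConfFor}, forward confluence of $\peq_{\rm c}$ gives $\Psi'\seq\Psi$ with $\Phi'\rel\Psi'$.
\item For item~\ref{itClusConfBack}, suppose $\Psi\rel\Phi\llsim\Phi'$. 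If $\Phi\sqSubset\Phi'$ via $\Phi\peq\Phi''\rel\Phi'$, forward confluence gives $\Psi''\seq\Psi$ with $\Psi''\rel\Phi''$, and transitivity gives $\Psi''\rel\Phi'$. The case $\Phi\peq\Phi'$ is the same argument without the last step.
\end{itemize}
So the real work is the case where $[\Psi]$ is reflexive. Then every $\Theta\in[\Psi]$ and every $\Xi\seq\Theta$ contains the set
\[\Gamma_\Psi:=\Psi^\nec\cup\{\nec\varphi:\varphi\in\Psi^\nec\}\cup\{\ps\varphi:\varphi\in\Psi^\ps\}.\]

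\emph{Step 2 (choosing $\Psi'$ by Finality).} For item~\ref{itClusConfFor}, let $P(\Xi)$ be the property ``$\Phi'\rel_{\rm c}\Xi$ and $\Gamma_\Psi\subseteq\Xi$''. By forward confluence, some $\Xi\seq\Psi$ satisfies $P$. I would then run the compactness argument of Proposition~\ref{propFinal} with $R={\rrel_{\rm c}}$, replacing ${\tt Hom}(\fA)$ by the first-order rendering of $P$. Concretely, this means the formulas $\varphi(x)$ for $\varphi\in\Gamma_\Psi$, together with $\varphi(x)$ for $\varphi\in\Phi'^\boxdot$ and $\neg\varphi(x)$ for $\varphi\notin\Phi'^\ps$, after the usual treatment of $\diamonddot$. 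The aim is a $\Psi'$ satisfying $P$ that is $\rrel_{\rm c}$-Final for $P$: whenever $\Psi'\rel\Xi$ and $P(\Xi)$ hold, we also have $\Xi\rel\Psi'$. This adaptation is routine, since $P$ is a set of first-order conditions on a single world relative to fixed parameters.

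\emph{Step 3 (absorbing the cluster).} Let $\Theta\in[\Psi]$. Since $\Psi\rel\Theta$ and $\Psi\peq\Psi_0$ for a suitable witness, I would use forward confluence together with the fact that $\Gamma_\Psi\subseteq\Psi'$. The goal is a $\Theta'\seq\Theta$ with $\Psi'\rel\Theta'$. If $\Psi\peq\Psi'$ is not available directly, I would instead build $\Theta'$ by the Lindenbaum lemma, as a prime extension of $\Theta\cup\Psi'^\boxdot$ avoiding $\{\varphi:\ps\varphi\notin\Psi'\}$. Its consistency would be checked with \ref{ax:fs} and \ref{ax:cd}, in the style of Lemma~\ref{lemmBoxLL}.

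Then $\Theta'\seq\Theta$ gives $\Gamma_\Psi\subseteq\Theta'$, and transitivity gives $\Phi'\rel\Theta'$, so $P(\Theta')$ holds. Finality of $\Psi'$ now yields $\Theta'\rel\Psi'$, hence $\Theta'\in[\Psi']$, which proves $[\Psi]\peq[\Psi']$. Item~\ref{itClusConfBack} is handled dually. We take $Q(\Xi)$ to be ``$\Xi\rel\Phi'$ and $\Gamma_\Psi\subseteq\Xi$'', choose $\Psi'$ to be $\rler_{\rm c}$-Final for $Q$, and absorb each $\Theta\in[\Psi]$ in the same way.

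The main obstacle is Step~3, namely producing $\Theta'\seq\Theta$ that lies $\rel$-above $\Psi'$ (respectively below). I would try first the Lindenbaum construction with the \ref{ax:fs}-based consistency check. If that gets stuck, I would strengthen $P$ so that Finality is taken over pairs, which would make the required $\Theta'$ exist by construction.
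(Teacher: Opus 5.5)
Your route is sound and genuinely different from the paper's. The step you flag as the main obstacle does go through, but not quite as you set it up. For item~1, let $[\Psi]$ be reflexive and $\Theta\in[\Psi]$.

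\begin{itemize}
\item From $\Psi\rel\Theta$ we get $\Theta\subseteq\Psi^\ps$. Since $\{\ps\chi:\chi\in\Psi^\ps\}\subseteq\Gamma_\Psi\subseteq\Psi'$, this gives $\ps\theta\in\Psi'$ for every $\theta\in\Theta$.
\item Extend $\Theta\cup\Psi'^{\nec}$ to a prime theory $\Theta'$ avoiding $\{\chi:\ps\chi\notin\Psi'\}$. Use $\Psi'^{\nec}$, not $\Psi'^{\boxdot}$: the latter would not force $\Psi'^{\nec}\subseteq\Theta'$, and hence would not give $\Psi'\rel\Theta'$.
\item Suppose this set were inconsistent. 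Then $\vdash\theta\wedge\beta\imp\gamma$ with $\theta\in\Theta$, $\nec\beta\in\Psi'$, and $\gamma=\bigvee_i\gamma_i$ where each $\ps\gamma_i\notin\Psi'$.
\item Nec and $\mathrm K_\nec$ give $\nec(\theta\imp\gamma)\in\Psi'$. Then $\mathrm K_\ps$ and $\ps\theta\in\Psi'$ give $\ps\gamma\in\Psi'$. Finally DP, $\mathrm N$ and primeness give some $\ps\gamma_i\in\Psi'$, a contradiction.
\end{itemize}
So FS2 and CD play no role in item~1.

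For item~2, extend $\Theta\cup\{\ps\chi:\chi\in\Psi'\}$ to a prime $\Theta'$ avoiding $\{\nec\chi:\chi\notin\Psi'\}$.
\begin{itemize}
\item An inconsistency would give $\theta\vdash\ps\alpha\imp\nec\gamma$, with $\alpha\in\Psi'$ and $\gamma$ a disjunction of non-members of $\Psi'$.
\item FS2 then yields $\nec(\alpha\imp\gamma)\in\Theta$, i.e.\ $\alpha\imp\gamma\in\Theta^\nec=\Psi^\nec\subseteq\Psi'$. Hence $\gamma\in\Psi'$, contradicting primeness.
\item This is where the $\Psi^\nec$ part of $\Gamma_\Psi$ is needed; the part $\{\nec\chi:\chi\in\Psi^\nec\}$ is never used.
\end{itemize}
This gives $\Theta'\rel\Psi'$, and your absorption via Finality then works exactly as you wrote it.

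Three smaller points:
\begin{itemize}
\item The Finality of Step~2 is indeed obtained from the greedy compactness argument over $\{\boxdot\chi(x)\}\cup\{\neg\diamonddot\chi(x)\}$ (respectively its dual).
\item In rendering $\Phi'\rel\Xi$ you need $\chi(x)$ for $\chi\in\Phi'^{\nec}$, not for $\chi\in\Phi'^{\boxdot}$.
\item The witness in your Step~1 for item~2 comes from backward confluence, not forward confluence.
\end{itemize}

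Compared with the paper: the paper never isolates the shared modal theory $\Gamma_\Psi$. Instead it turns the whole cluster $[\Psi]$ into a positively labelled structure $\fC$, labelling each $\Upsilon$ by $(\Upsilon,\varnothing,\varnothing)$. It then takes a simulably $\rrel$-Final $\Psi'$ above the forward-confluence witness via Proposition~\ref{propFinal}. Finally, it applies forward confluence of the simulation twice, together with Finality, to push each $\Upsilon\in[\Psi]$ to some $\Upsilon'\seq\Upsilon$ inside $[\Psi']$.

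So the paper's absorption step is order-theoretic, treats the cluster as a whole, and needs no further syntactic reasoning. Its price is applying simulable Finality to a possibly infinite cluster with complete theories as labels, which stretches the paper's definition of a sprout. Your version needs only the one-variable, Fine-style Finality for a first-order type, and it shows which axioms drive cluster confluence: $\mathrm K_\ps$, DP and $\mathrm N$ for the forward direction, FS2 for the backward one. The cost is a reflexive/irreflexive case split and a separate Lindenbaum argument for each item.
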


\begin{proof}
For the first item, by forward confluence, there is $\Psi''$ with $\Phi \rel \Psi'' \seq  \Psi$.
Let $\fC $ be a $\Sigma$-sprout whose domain is $[\Psi] $ labelled by $ \ell(\Xi) = (\Xi,\varnothing,\varnothing)$.
Using forward and backward confluence, we easily see that $\fC \subseteq_\Sigma \Psi''$, so by Proposition~\ref{propFinal}, there is $ \rel $-Final $\Psi' \ler_{\rm c}\Phi''$ for $\fC$.
Then, if $E\colon \dom{\fC} \simu W_{\rm c}$ is a simulation with $ \Psi \mathrel E \Psi' $ and $\Upsilon \in [\Psi] $, using the confluence of $E$, we find $\Upsilon'$ such that $\Psi'\mathrel R \Upsilon' $ and $\Upsilon\mathrel E\Upsilon'$ and, since $\Upsilon\mathrel R \Psi$, we again use confluence to find $\Psi^*$ with $\Upsilon' \rel \Psi^* $ and $\Psi \mathrel E \Psi^*$.
By transitivity or $R$, $\Psi'\rel \Psi^*$ and, by $\rel$-Finality, $\Psi^*\equiv \Psi'$, hence also $\Upsilon '\equiv \Psi' $.
But note that $\Upsilon \mathrel E\Upsilon'$ implies that $\Upsilon\peq_{\rm c} \Upsilon'$ by the way we defined $\fC$, so, indeed, $[\Psi]\peq_{\rm c}[\Psi']$.

The second claim is similar, except that we have the weaker assumption that $\Phi \llsim \Phi'$.
Pick $\Phi''$ such that $\Phi \peq \Phi'' \rel \Phi'$, and repeat the above argument with $\Phi''$ instead of $\Phi'$, and backward confuence instead of forward.
\end{proof}

The next lemma will be essential in showing that many of our constructions are stable under subsequent extensions.

\begin{lemma}\label{lemmGraftExt}
Let $\fA \subseteq_{\rm c} \fA' $ be canonical sprouts.
Suppose that $\fA \sqcup _w \fB$ can be grafted  and let $\peq $ be an honest, pivotal order for $\cl A \sqcup _w\cl B$.
Then, there is a canonical sprout $\fB' \supseteq_{\rm c} \fB $ such that there exists a pivotal order ${\peq'}  $ for $\fA'  \sqcup _w \fB' $ with the property that if $x\in \dom\fA$ or $y\in\dom\fB$, then  $x\peq'y$ if and only if $x\peq y$.
\end{lemma}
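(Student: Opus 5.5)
We extend $\fB$ so that the comparisons already encoded in $\peq$ stay expansive after $\fA$ has grown to $\fA'$. Since $\peq$ is pivotal for $\fA\sqcup_w\fB$, every strict comparison $u\prec u'$ has $u\in\dom\fA$ and $u'\in[w]\cup\dom\fB$, with pivot $w$. Enlarging $\fA$ to $\fA'$ can only break confluence at the new points $\dom{\fA'}\setminus\dom\fA$ that are $\rel$-successors of some $u\in\dom\fA$ with $u\prec u'$. The plan is to repair each such failure by adding new successors to $\fB$ only, never to $\fA'$. This is why we obtain $\fB'\supseteq_{\rm c}\fB$ and can leave every comparison with $x\in\dom\fA$ or $y\in\dom\fB$ untouched.

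\textbf{Step 1: closure conditions.} Set $N=\dom{\fA'}\setminus\dom\fA$. Because $\fA\subseteq_{\rm c}\fA'$ is initial, every $v\in N$ with $u\rel v$ for some $u\in\dom\fA$ lies strictly above $u$. The forward confluence we must restore has the form: if $u\prec u'$ and $u\rel v\in N$, we need some $v'$ with $u'\rel v'$ and $v\peq' v'$. Backward confluence can only fail at the new points as well, and only through comparisons of the form $v\prec' v'$ that we ourselves introduce.

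\textbf{Step 2: witnesses in the canonical model.} Honesty gives $h(u)\peq^\Sigma_{\rm c} h(u')$, so some $\Theta'\equiv h(u')$ satisfies $\Theta'\subT h(u')$ and $h(u)\peq_{\rm c}\Theta'$. Applying Lemma~\ref{lemmClusConf}\eqref{itClusConfFor} to $h(u)\rel_{\rm c} h(v)$ yields a $\rel_{\rm c}$-Final $\Psi'$ with $\Theta'\rel_{\rm c}\Psi'$ and $[h(v)]\peq_{\rm c}[\Psi']$; the second clause of that lemma handles the case $u'\in[w]$. We then use Proposition~\ref{propFinal} and Lemma~\ref{lemmModalDefects}-style selection to pick a finite set of $\Sigma$-representatives, build a copy of the subtree of $\fA'$ above $v$ out of these witnesses, and graft it onto $u'$ if $u'\in\dom\fB$, or onto $w$ if $u'\in[w]$, which is allowed because $w$ is the grafting point. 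We do this recursively down the finitely many new points, i.e.\ by induction on the depth of $v$ in $\fA'$. Each new point $x\in N$ gets partner points $x^\ast$ in $\fB'$, and we put $x\peq' x^\ast$. By construction $h(x)\peq^\Sigma_{\rm c} h(x^\ast)$, so the new comparisons are honest.

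\textbf{Step 3: the order and its properties.} Define $\peq'$ as the least expansive preorder (Lemma~\ref{lemmExpJoin}) containing $\peq$ and the new pairs $(x,x^\ast)$, then check that nothing else is forced. Every new strict pair goes from $N\subseteq\dom{\fA'}$ into $\dom{\fB'}\setminus\dom\fB$, and its meet lies above $w$ in the grafted tree. Hence $\peq'$ is pivotal for $\fA'\sqcup_w\fB'$, and pairs with $x\in\dom\fA$ or $y\in\dom\fB$ are exactly those of $\peq$. Label monotonicity $\ell(x){\upharpoonright}\Sigma\peqT\ell(x^\ast){\upharpoonright}\Sigma$ comes from $[h(v)]\peq_{\rm c}[\Psi']$ together with $\Sigma$-representatives.

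\textbf{Main obstacle.} The hard part is backward confluence for the new pairs, and checking that the join does not create extra comparisons that touch $\dom\fA$ or $\dom\fB$. If $x\peq' x^\ast$ and $x^\ast$ has a $\rel$-predecessor in $\fB$ or at $w$, we need a matching predecessor of $x$. I would handle this by choosing each partner cluster $\rel$-Final and taking whole clusters via Lemma~\ref{lemmClusConf}, so that the predecessors of $x^\ast$ are exactly $u'$ and the points below it, which are already matched to $u$ and its predecessors by $\peq$.
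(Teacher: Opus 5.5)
This is essentially the paper's argument. The paper also inducts over the new points of $\fA'$ (on $\#(\dom{\fA'}\setminus\dom\fA)$, starting from a $\rrel$-minimal new point), uses Lemma~\ref{lemmClusConf} to find a witness cluster above each $\peq$-successor in $\dom\fB$ of the attaching point, grafts $\Sigma$-representatives there, and adds only comparisons from new points of $\fA'$ to new points of $\fB'$.

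One slip concerns backward confluence. It asks that each predecessor of $x$ be matched by a predecessor of $x^\ast$, not the converse you state. This direction is routine: for predecessors $z\rrel u$ in $\dom\fA$, the original backward confluence of $\peq$ together with $u\peq u'\rel x^\ast$ supplies the witness, and your partner chain handles new predecessors. Also, $\peq$ plus the new pairs is already transitive, so the appeal to Lemma~\ref{lemmExpJoin} can be dropped; that lemma in any case needs expansive relations as input.
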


\begin{proof}
As before, we follow an inductive argument, this time on $\#(\dom{\fA'} \setminus \dom{\fA} )$.
Pick a $\rrel$-minimal element $w'$ of $\#(\dom{\fA'} \setminus \dom{\fA} )$, so that $w  \rel^1 w' $ for some $w\in \dom{\fA}$.
Let $v \succ w$ with $v\in \dom{\fB}$.
Using Lemma~\ref{lemmClusConf}, pick a cluster $D \ler_{\rm c} h(v)$ with $D \seq_{\rm c} h([w'] )$, then pick a set of cluster $\Sigma$-representatives $C$ for $D$ and graft $C$ onto $v$, with $ [w']\peq ' C$.
Note that there may be more than one possible choice for $v$; after repeating this operation for every such choice, we obtain $\fB''$ with $\fA \sqcup_w \fC \peq \fB''$.
We can then apply the induction hypothesis to $\dom{\fA'}\setminus \dom {\fA \sqcup_w \fB'' }$, given that $\#(\dom{\fA'}\setminus \dom {\fA \sqcup_w \fB'' }) < \#(\dom{\fA'}\setminus \dom \fA)$, and thus obtain the desired $\fB' \supseteq_{\rm c} \fB''\supseteq_{\rm c} \fB $.
\end{proof}

\section{Defect Resolution}

In Lemma~\ref{lemmModalDefects}, we have already seen how to resolve modal defects.
The rest of the constructions in the previous section will be aimed at resolving intuitionistic defects, which will mostly be referred to as {\em defects} from here on, as we may assume that the modal defects have already been resolved.
Such defects will be resolved via grafting, according to the following definition.

\begin{definition} 
Let $\delta=(v,\varphi)$ be an intuitionistic defect of a canonical $\Sigma$-quasimodel $\fA$.
We say that a pair $(\fB,w)$ is a {\em potential resolution} for $\delta$ if $w$ is a bud, $\fA \sqcup_ w \fB$ can be grafted, and there is a pivotal order $\peq $ for $\fA\sqcup _w \fB$ such that $\peq$ resolves $\delta$.
If there is $y$ such that $ w\prec y$ and $w\rel y$, the potential resolution is a {\em stub,} otherwise it is {\em live.}

We say that $u$ is a {\em target} for  $(\fB,w)$ if it is a root of $\fB$ and a {\em source} if either $ w\rel^1 u \rrel v$ or $w\equiv v\equiv u$.
\end{definition}

As we will see later, the condition of liveness will allow us to use $w$ to resolve future defects, whereas stubs cannot be reused.
We will construct potential resolutions by a recursive method, starting from the defect itself and climbing down towards the root, until we find a good pivot for the resolution.
The base case of this recursion is an {\em initial potential resolution,} which is a $\Sigma$-tree whose root is a point of resolution.
However, we cannot yet obtain a suitable intuitionistic relation to resolve the defect, since, in principle, backward confluence may fail at this root, even though it holds elsewhere.

\begin{definition}
Let $\Sigma\Subset \lanfull$, $\delta=(v,\varphi)$ be a defect of $\fA$, $\fB$ be a $\Sigma$-tree with a single irreflexive root $t$, and $R\subseteq \dom{\fA_v} \times \dom\fB$.
We say that $(\fB,R)$ is an {\em initial potential resolution  for $\delta$} if 
\begin{enumerate}

\item $v\mathrel R t$, $\Sigma_\fA(u) \subseteq h(t)$, and $h(t)$ is a resolution point for $\varphi$,

\item if ${\rel}(v) \neq\varnothing$, then for $\fA' = \fA {{\upharpoonright}} {\rel}(v)$, there is $s\ler^1 t$ such that $R \cap \dom{\fA_v} \times \dom{\fB_s}$ is a $\Sigma$-intuitionistic relation, and

\item if $u$ is a bud, then  $ h(u) \ll h(t)$.

\end{enumerate}
\end{definition}

\begin{lemma}\label{lemmInitialRes}
If $\fA$ is a $\Sigma$-tree with a defect $\delta$, then $\delta$ has an initial potential resolution.
\end{lemma}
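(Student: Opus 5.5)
The plan is to take a point of resolution in the canonical model that is Final in a suitable sense, and then grow a $\Sigma$-tree above it by Finality-based confluence witnesses. Write $\delta=(v,\varphi)$ and $\Theta=h(v)$. Since $\delta$ is an intuitionistic defect, either $\varphi=\overline{\psi\to\theta}$ with $\psi\notin\ell(v)$, or $\varphi=\overline{\nec\psi}$ with $\ps\overline\psi\notin\ell(v)$. In the first case, Lemma~\ref{lem:lindenbaum} applied to $\Theta\cup\{\psi\}$, which is $\theta$-consistent because $\psi\to\theta\notin\Theta$, yields $\Theta'\seq_{\rm c}\Theta$ with $\psi\in\Theta'$ and $\theta\notin\Theta'$. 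In the second case, Lemma~\ref{lemmBoxLL} together with the canonical truth lemma (Proposition~\ref{lem:truth-lemma:regular}) gives some $\Theta'\seq_{\rm c}\Theta$ with $\nec\psi\notin\Theta'$ and $\psi\in\ell^{\notnec}(\Theta')$. Either way, $\Theta'$ is a resolution point for $\varphi$ above $\Theta$.

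We then apply Proposition~\ref{propFinal} twice. First, with $R=\llsim_{\rm c}$, we replace $\Theta'$ by a point lying in a region strictly $\ll$-above every bud below $v$. Second, with $R=\peq_{\rm c}$, we make it $\peq$-Final for the one-point sprout $\fA_{\overline\varphi}$, enlarged by the labels of $\fA_v{\upharpoonright}\Sigma$ pushed forward by $\peq$. This $\peq$-Finality choice is what guarantees that the new point $t$ is genuinely a resolution point, i.e.\ that $(t,\varphi)$ is not itself a defect. It also ensures $\Sigma_\fA(u)\subseteq h(t)$ for the relevant $u$, because each step only moves $\peq$-upward and persistence preserves the positive part. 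Call the resulting theory $h(t)$. The third condition, $h(u)\ll h(t)$ for buds $u$, comes from choosing $h(t)$ $\llsim$-Final for a sprout that includes $\fA_v{\upharpoonright}\Sigma$ with the new defect witness at its root. Any region containing such a point dominates the regions of the buds below, and is strictly above them because $h(t)$ falsifies a formula from $\Theta$'s type that those buds force.

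To build the rest of $\fB$, we proceed by induction on ${\rm hgt}(\fA_v)$, mirroring the proof of Lemma~\ref{lemmGraftExt}. For each $\rel^1$-successor cluster $[v']$ of $v$, Lemma~\ref{lemmClusConf}(\ref{itClusConfFor}) applied to $h(v')\sps h(v)\peq h(t)$ gives a cluster $D\ler_{\rm c} h(t)$ with $h([v'])\peq_{\rm c} D$. We take a set of $\Sigma$-representatives of $D$, graft it onto $t$ as the successor $s$, relate $[v']$ to it by $R$, and recurse inside $\fA_{v'}$. This realizes forward confluence everywhere. Backward confluence must also hold away from the root, and for this we choose $D$ to be $\rel$-Final, as in Lemma~\ref{lemmClusConf}. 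The restriction to $\fA_v\times\fB_s$ is then expansive and $\Sigma$-intuitionistic, which is condition 2. Finally, we apply Lemma~\ref{lemmModalDefects} to close off modal defects, so that $\fB$ is a $\Sigma$-tree whose root $t$ is irreflexive and single.

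The main obstacle is reconciling the two Finality requirements on $t$: $\peq$-Finality is needed for resolving $\varphi$, and $\llsim$-placement is needed for condition 3, and these must hold at the same point. My first attempt would be a single application of Proposition~\ref{propFinal} with $R=\peq_{\rm c}$ to the sprout obtained by putting a fresh root labelled $\overline\varphi$ beneath a copy of $\fA_v{\upharpoonright}\Sigma$. I would then argue, as in Lemma~\ref{lemmBudNonEmpty}, that the resulting point automatically lands in a region strictly above the bud regions.
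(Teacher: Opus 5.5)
There is a genuine gap, and it is in how you choose the root $h(t)$. Your construction of the part of $\fB$ above the root matches the paper. That part is the recursive use of Lemma~\ref{lemmClusConf}(\ref{itClusConfFor}) with $\Sigma$-representatives (essentially Lemma~\ref{lemmGraftExt} done by hand), followed by Lemma~\ref{lemmModalDefects}. The paper, however, takes $h(t)$ to be an arbitrary resolution point $\Phi\seq_{\rm c}h(v)$, with no Finality at all. You first apply Proposition~\ref{propFinal} with $R={\llsim_{\rm c}}$, which only yields $\Theta'\llsim_{\rm c}\Theta''$. Since $\llsim_{\rm c}$ contains $\rel_{\rm c}$-steps, this loses $h(v)\peq_{\rm c}h(t)$, and the later $\peq_{\rm c}$-Final step cannot restore it. Consequences:
\begin{itemize}
\item Your claim that ``each step only moves $\peq$-upward'' is false.
\item $h(t)$ is no longer a resolution point sitting above $h(v)$.
\item Your later application of Lemma~\ref{lemmClusConf}(\ref{itClusConfFor}) to $h(v')\sps h(v)\peq h(t)$ has no hypothesis to stand on.
\end{itemize}
This is not a technicality. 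Suppose, say, $h(v)\rel_{\rm c}h(t)$, $p\in h(v')$ and $\nec\neg p\in h(t)$. Then every $\Psi\ler_{\rm c}h(t)$ contains $\neg p$, so there is no cluster $D\ler_{\rm c}h(t)$ with $h([v'])\peq_{\rm c}D$ at all.

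The ``main obstacle'' that motivated the detour does not exist. Condition~3 concerns $v$ itself. It cannot be meant for all buds below $v$: in the internal case of Proposition~\ref{propExistsStrategic}, the root $t$ of an initial potential resolution satisfies $\rho(t)=\rho(w)$ for a bud $w\rrel v$. For $v$ a bud, condition~3 holds for every resolution point $\Phi$:
\begin{itemize}
\item $h(v)\in\hat\rho(v)$ is $\peq_{\rm c}$-maximal in the region $\rho(v)$, which is a full $\llsim_{\rm c}$-cluster.
\item $h(v)\prec_{\rm c}\Phi$ strictly, because $(v,\varphi)$ is a defect while $(\Phi,\varphi)$ is not.
\item So $\Phi\llsim_{\rm c}h(v)$ would place $\Phi$ in $\rho(v)$ strictly $\peq_{\rm c}$-above $h(v)$, a contradiction.
\end{itemize}
Your heuristic that $h(t)$ ``falsifies a formula those buds force'' would not yield strictness anyway, since by Lemma~\ref{lemmBoxLL} $\llsim_{\rm c}$ is governed by $\boxdot$-formulas. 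Likewise, $\peq_{\rm c}$-Finality is not what makes $t$ a resolution point: your Lindenbaum/truth-lemma point $\Theta'$ already is one.

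So drop both applications of Proposition~\ref{propFinal} and set $h(t)=\Theta'$. Then $\Sigma_\fA(v)\subseteq h(t)$ follows from $h(v)\subseteq\Theta'$, and your tree-building goes through as in the paper. You must also treat the case where $v$ is reflexive: then $[v]\subseteq\rel(v)$ needs a matching cluster $D\ler_{\rm c}h(t)$ with $h([v])\peq_{\rm c}D$, which the paper obtains from the same lemma.
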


\begin{proof}
Let $\Phi\seq h(v)$ be a resolution point for $ \delta : =(v,\varphi)$.
If ${\rel}(v)\neq\varnothing$, use Lemma~\ref{lemmClusConf}\ref{itClusConfFor} to find a cluster $D'$ such that $h([v]) \peq  D'$ and $  \Phi \rel D'  $.
Let $D$ be a set of $\Sigma$-representatives for $D$.
For $x\in {\rel}(v)$ and $y\in D$, let $x \mathrel R y$ if and only if   $h(x) \peq^\Sigma_{\rm c} y $.
Use Lemma~\ref{lemmGraftExt} to extend $C$ to some $\fB'$ such that $\fA_v \peq \fB'$, witnessed by some $\Sigma$-intuitionistic relation $R'\supseteq R$.
Add $\Phi$ to $\fB'$ as an irreflexive root, then use Lemma~\ref{lemmModalDefects} to extend to a $\Sigma$-tree.
By adding $(v,\Phi) $ to $R' $, we obtain the desired initial potential resolution.
\end{proof}

Next we develop the inductive step of climbing down to the root.
The idea is to try to obtain a potential resolution but, if this fails, we instead obtain the next-best-thing, which is a {\em local potential resolution.}
These won't quite do the job of resolving the defect, but can be propagated downward until we find a better opportunity.

\begin{definition}
Let $\Sigma\Subset\lanfull$, $\delta=(v,\varphi)$ be a defect of a canonical  $\Sigma$-tree $\fA$ and $u\rrel v$.
Let $\fB$ be a $\Sigma$-tree with root $t$ and $R\subseteq\dom{\fA_v}\times\dom\fB$.
We say that $\fB$ is a {\em local potential resolution  for $\delta$ at $u$} if 
\begin{enumerate}

\item $h(u) \llsim h(t)$ and $t$ is a bud,

\item there is a $\Sigma$-intuitionistic relation $R\subseteq \dom{\fA_u} 
\times \dom\fB$ resolving $\delta$.

\end{enumerate}
If moreover $u$ is not a bud or $h(u) \ll h(t)$, then $\fB$ is {\em strict.}
\end{definition}

\begin{definition}
If $\fA$ is a canonical sprout, $w\in\dom\fA$ is a {\em breaking point} if $w\rrel v$ and either $\rho(w) = \{{\Perp}\}$, or else there is $w'$ with $w'\rel^1 w$ and $\rho(w')\neq\rho(w)$. 

If $v\in\dom\fA$, ${\rm Break}(v)$  is the set of all $w\rrel v$ such that $w$ is a breaking point.
\end{definition}

We specifically wish to construct local potential resolutions at {\em breaking points,} which are roughly points where $\rho$ makes a `jump'.

\begin{definition}\label{defLadder}
Let $\Sigma\Subset\lanfull$, $\delta=(v,\varphi)$ be a defect of a canonical  $\Sigma$-tree $\fA$ and $u\rrel v$, and $\vec{\fB} := \{(\fB_u,R_u):u \in {\rm Break}(v) \}$ be such that $(\fB_u,R_u)$ is a local potential resolution for $\delta$ at $u$.

For $u \in {\rm Break}(v)$, let $(\fB'_u,R')$ be either $(\fB_{u'},R_{u'})$, where $u' $ is a least element of ${\rm Break}(v) \cap (u,v]$, if such an element exists.
If no such element exists, let $(\fB'_u,R')$ be an initial potential resolution for $\delta$.

Then, $\vec{\fB} $ is a {\em ladder for $\delta$} if for every $u\in {\rm Break}(v)$, either
\begin{enumerate}[label=(\alph*)]

\item\label{itBud} $u$ is a bud, $\fB_u = \fA_u \sqcup_u \fB'$, and $R_u$ is pivotal for $\fB_u$, or

\item\label{itStrict} $(\fB_u,R_u) $ is strict.

\end{enumerate}

\end{definition}

{\em Ladders} are chains of local potential resolutions, providing our means of climbing down from a defect to the root.

\begin{lemma}\label{lemmResInductive}
If $\Sigma\Subset\lanfull$ and $\delta=(v,\varphi)$ is a defect of a canonical  $\Sigma$-tree $\fA$, then there is a ladder for $\delta$.
\end{lemma}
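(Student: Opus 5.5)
We construct the ladder by downward recursion along ${\rm Break}(v)$. Since $\fA$ is a finite tree-like sprout, the predecessors of $v$ are totally ordered by $\rrel$, so ${\rm Break}(v)$ is a finite chain $u_0 \srel u_1 \srel \cdots \srel u_k \rrel v$, where $u_0$ may be taken in the root region $\{\Perp\}$. We define $(\fB_{u_i},R_{u_i})$ starting from $i=k$ and descending to $i=0$. At each stage the object $(\fB'_{u_i},R')$ of Definition~\ref{defLadder} is already available. For $i=k$ it is an initial potential resolution, supplied by Lemma~\ref{lemmInitialRes}. For $i<k$ it is the already constructed $(\fB_{u_{i+1}},R_{u_{i+1}})$.

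Fix $u=u_i$ with predecessor data $(\fB',R')$ and root $t'$ of $\fB'$. We split into two cases according to whether $u$ is a bud.

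If $u$ is a bud, we take option~\ref{itBud}: set $\fB_u=\fA_u\sqcup_u\fB'$. This can be grafted because $h(u)\ll h(t')$ (or $h(u)\llsim h(t')$) holds for the data coming from above. We then need $h(u)\rel_{\rm c} h(t')$, which we expect to get from Lemma~\ref{lemmregionProps}, since buds are $\rrel$-minimal in their region. We define $R_u$ by combining $R'$ with the identity on $\fA_u$ and adding the pairs $(x,y)$ with $x\in[u]$ and $y\in\dom{\fB'}$ required by forward confluence at $u$. Then:
\begin{itemize}
\item Forward confluence is checked with Lemma~\ref{lemmClusConf}\ref{itClusConfFor}.
\item Backward confluence at the graft point follows from Lemma~\ref{lemmregionProps}\ref{itregionTwo}, which lets $\rel$-edges out of $\peq$-larger elements of a region be inherited from the bud.
\item Pivotality with pivot $u$ holds by construction, since every strict pair has its left coordinate in $\fA$ and its right coordinate in $[u]\cup\dom{\fB'}$.
\item Honesty is preserved because all new pairs come from $\peq^\Sigma_{\rm c}$.
\end{itemize}

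If $u$ is not a bud, strictness is automatic, so it suffices to produce any local potential resolution at $u$. We propagate $R'$ downward. Let $u'$ be the next break point above $u$. By definition of breaking points, $\rho$ is constant on the segment $[u,u')$, so $h(u)\llsim h(t')$ and $h(u)$, $h(t')$ lie in related regions. Using Lemma~\ref{lemmClusConf}\ref{itClusConfBack}, we build clusters below $t'$ $\peq$-dominating $h([x])$ for each $x$ on the segment $[u,u')$. We close this up to a tree using Lemma~\ref{lemmGraftExt}, and then Lemma~\ref{lemmModalDefects} to obtain a $\Sigma$-tree $\fB_u$. We take its root in $\hat\rho$ of the target region, which is possible by Lemma~\ref{lemmBudNonEmpty}, so that the root is a bud. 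The relation $R_u$ is defined as the union of $R'$ with the new pairs, and is expansive by Proposition~\ref{propExpandingClos}. Resolution of $\delta$ is inherited from $R'$ through the point of resolution, which is untouched.

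The main obstacle is the non-bud case: making the new root of $\fB_u$ a bud while keeping $h(u)\llsim h(t)$ and backward confluence on the whole segment between break points. To handle this, we first try choosing the root as a $\llsim$-Final element (via Proposition~\ref{propFinal}) for the sprout $\fA_u{\upharpoonright}\Sigma$. We then move up to a canonical bud of its region using Lemma~\ref{lemmBudNonEmpty}, and use the $\rrel$-minimality in Lemma~\ref{lemmregionProps}\ref{itregionOne} to reattach the previous material above it.
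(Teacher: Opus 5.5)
Your recursion along ${\rm Break}(v)$ matches the paper: the base case comes from Lemma~\ref{lemmInitialRes}, and non-buds are automatically strict. The bud case, however, has a genuine gap.

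\textbf{The graft at a bud is not justified.} You take option~(a) at \emph{every} bud $u$. That means grafting $\fB'$ directly onto $u$, which requires $h(u)\rel_{\rm c}h(t')$, and nothing gives this.
\begin{itemize}
\item You do have $h(u)\rel_{\rm c}h(u')$. But $h(t')$ is only $\llsim$-above $h(u')$ (or $\peq$-above $h(v)$ for the initial resolution), and it lies in general outside $\rho(u)$.
\item Lemma~\ref{lemmregionProps} speaks only about a single region. Item~\eqref{itregionOne} gives $\rel$-edges from the bud to points of the same region, and item~\eqref{itregionTwo} transfers edges from $\peq$-smaller points of that region. Neither reaches $h(t')$.
\item Crossing the extra $\llsim$-step needs backward confluence. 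That moves you $\peq$-upward from $h(u)$ to some cluster $C$. Since $h(u)$ is $\peq$-maximal in $\rho(u)$, $C$ may leave the region.
\item Concretely, if some $\psi\in h(t')$ (e.g.\ newly true at the point of resolution) has $\ps\psi\notin h(u)$, the graft is illegal.
\end{itemize}
This is why the paper first applies Lemma~\ref{lemmClusConf}\eqref{itClusConfBack} to get a cluster $C\seq\hat\rho(u)$ with $C\rel h(t)$. It takes option~(a) only when $C$ is, up to $\Sigma$-representatives, the bud cluster $h([u])$ itself. In every other case, including buds with $h(u)\prec C'$, it builds a strict resolution; this is exactly why strictness permits $h(u)\ll h(t)$.

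Your $R_u$ in the bud case is also mis-specified. Pairs from $[u]$ into $\dom{\fB'}$ would require $h(x)\peq^\Sigma_{\rm c}h(y)$ for $x\in[u]$, $y\in\dom{\fB'}$, which you do not have. What backward confluence actually needs is to relate every $x\in\dom{\fA_u}$ with $\rho(x)=\rho(u)$ (the paper's $H$) to the cluster $[u]$ via $\peq^\Sigma_{\rm c}$. The reason is that points strictly between $u$ and $u'$ are not $\rel$-below $\fB'$ in $\fA_u\sqcup_u\fB'$.

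\textbf{The strict case has the same problem in another place.} Your plan is to take a $\llsim$-Final point for $\fA_u{\upharpoonright}\Sigma$, move $\peq$-up to a canonical bud, and reattach $\fB'$ by Lemma~\ref{lemmregionProps}\eqref{itregionOne}. This fails for three reasons:
\begin{itemize}
\item That lemma produces no $\rel$-edge into the region of $t'$.
\item Moving a node $\peq$-upward does not preserve its $\rel$-edges to fixed successors; forward confluence only supplies new, $\peq$-larger ones.
\item Finality for $\fA_u$ alone says nothing about accommodating $\fB'$.
\end{itemize}
The paper's device is to first assemble $\fB^*=\fC\sqcup\fB'$ legitimately. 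Here $\fC$ extends $C'$ by Lemma~\ref{lemmGraftExt}, and the graft is allowed because $C\rel h(t)$. Only then does it apply Proposition~\ref{propFinal} to the \emph{whole} of $\fB^*$. The resulting $h'$ is a $\rel$-homomorphism preserving $\Sigma$-labels. So all edges, including those into the old $\fB'$, and the $\Sigma$-intuitionistic relation $R=R'\cup E'$ survive the relabelling, while the root is moved into a $\llsim$-Final region. Lemma~\ref{lemmModalDefects} then restores the tree property.
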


\begin{proof}
We build $(\fB_u,R_u)$ inductively on the distance from $u$ to $v$ i.e., the maximal $n$ so that there is a chain $u=u_0\srel \ldots \srel u_n $ of elements of ${\rm Break}(v)$.
Note that if $u\cong v$, then $(\fB'_u,R')$ exists by Lemma~\ref{lemmInitialRes}, otherwise, it exists by induction hypothesis.
Let $t$ be a root for $\fB'$.

Using Lemma~\ref{lemmregionProps}, we see that $ \hat\rho( u)    \rel h(u') \llsim h(t)$, so that by Lemma~\ref{lemmClusConf}\eqref{itClusConfBack}, there is a cluster $C  \seq \hat\rho( u)$ with $C \rel h(t)$.
Let $C'$ be a set of $\Sigma$-representatives for $C$ and let $H = \{ x\in \dom{\fA_u} : \rho(x) = \rho(u)\} $ unless $\rho(u) = \{{\Perp}\}$, in which case, $H=[u]$.
Define $E \subseteq H \times C'$ by $x\mathrel E \Upsilon $ if and only if $h(x )\peq^\Sigma_{\rm c} \Upsilon$.
It is easy to see that $E$ is a $\Sigma$-intuitionistic relation and hence by Lemma~\ref{lemmGraftExt}, we can extend $C'$ to a canonical sprout $\fC$ such that $\fA_u\peq \fC$ via some $E'$, then let $\fB^* := \fC \sqcup_u \fB' $ and note that $\fA_u \peq \fB^*$ via $R:=R'\cup E'$.

In the case that $u$ is a bud and $ C'=\hat \rho(w)$, we can simply take $\fC   = \fA$ and $E' = E \cup {\mathbb I}_{\dom\fA}$ (where $\mathbb I_X$ is the identity on $X$).
Then, $\fB^* := \fA \sqcup_u \fB' $ and we may set $\fB_u = \fB^*$ to obtain \ref{itBud}.

Otherwise, either $u$ is not a bud or $h(u) \prec C' $.
The issue here is that the cluster $C $ we found may not be Final, so we use Proposition~\ref{propFinal} to find $\Theta \ggsim h(t)$ such that $\fB^* \subseteq_\Sigma \Theta $ and $\Theta$ is $\llsim$-final.
By definition, there is a homomorphism $h' \colon \dom{\fB^*} \to W_{\rm c}$ with $h'(t) = \Theta$.
We obtain $\fB_u$ from $\fB^*$ by first replacing $h_{\fB^*}$ by $h'$ and then applying Lemma~\ref{lemmModalDefects} to resolve any modal defects.
Finally, we note that we still have $\fA_u \peq \fB_u$ via the same relation $R$.
\end{proof}

The conditions \ref{itBud} and \ref{itStrict} we identified are designed so we can always resolve defects in a `strategic' way, i.e., one that, as we will see, will ensure termination of the model-search.

\begin{definition}
Let $\Sigma\Subset\lanfull$ and $\delta=(v,\varphi)$ be a defect of a canonical sprout $\fA$ and $(\fB,w)$ be a potential resolution for $\delta$.
Let $u$ be a source for $(\fB,w)$ and $t$ be a target.
Then, we say that $(\fB,w)$ is {\em strategic} if $\rho (u ) \llsim \rho (t)$ and it is either
\begin{enumerate}[label=(\alph*)]

\item {\em progressive:} $u\not\equiv w$ and $ \rho(u ) \ll \rho (t) $,

\item {\em budding:} $u \not \equiv w$ and $u$ is not a bud but $t$ is,

\item {\em internal:} $w$ is a bud and $\rho(w) = \rho(v) = \rho(v') $, or

\item {\em nuclear:} $u \equiv w$ and $ \rho(u ) \ll \rho (t) $.

\end{enumerate}
\end{definition}

\begin{proposition}\label{propExistsStrategic}
If $\Sigma\Subset\lanfull$ and $\delta=(v,\varphi)$ is a defect of a canonical  $\Sigma$-tree $\fA$, then there is $w\rrel v$ such that $w$ is a strategic potential resolution for $\delta$.
The potential resolution is live, unless the defect is nuclear.

In particular, if $\vec{\fB}$ is a ladder for $\delta$ and there is any breaking point $u \rrel v$ such that $(\fB_u,R_u)$ is strict, then $w$ can be chosen so that $w\srel u$.
\end{proposition}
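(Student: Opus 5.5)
The plan is to start from a ladder and walk down it. By Lemma~\ref{lemmResInductive}, fix a ladder $\vec{\fB}=\{(\fB_u,R_u):u\in{\rm Break}(v)\}$ for $\delta=(v,\varphi)$. Since $\fA$ is a finite tree and $\{\Perp\}$ is the root region, ${\rm Break}(v)$ is a finite chain below $v$ whose least element $u_0$ is a breaking point lying in (or just above) the root region. I will produce the strategic resolution by taking $u$ to be the $\rrel$-least breaking point at which the ladder condition is \ref{itStrict}. If no such point exists, I will use $u=u_0$.

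First case: some $(\fB_u,R_u)$ is strict. Let $u$ be the $\rrel$-least such point (any strict point would serve for the final clause, but using the least one keeps the construction uniform). Let $w$ be a $\rrel$-predecessor with $w\rel^1 u$, so that $\rho(w)\neq\rho(u)$ because $u$ is a breaking point. I then graft $\fB_u$ onto $w$. Grafting is permitted because the root $t$ of $\fB_u$ satisfies $h(u)\llsim h(t)$, and $h(w)\rel h(u)$; Lemma~\ref{lemmClusConf}\eqref{itClusConfBack} lets me replace $t$ by a $\rel$-successor of $h(w)$ while keeping the cluster $\peq$-above. To obtain a pivotal order for $\fA\sqcup_w\fB_u$ with pivot $w$, I take $R_u\cup{\mathbb I}$ restricted as needed and use Lemma~\ref{lemmGraftExt}, passing through Proposition~\ref{propExpandingClos}, to close it up into an expansive relation.

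Strictness then gives the strategic type. If $h(u)\ll h(t)$, the resolution is progressive, with source $u$ and target $t$, and $\rho(u)\ll\rho(t)$. If instead $u$ is not a bud while $t$ is, it is budding. Both cases give $w\srel u$, which is the final claim. Liveness comes from choosing $w$ so that no $y$ with $w\prec y$ and $w\rel y$ occurs: the pivotal order only places $\prec$-successors in $[w]\cup\dom{\fB_u}$, and only roots of $\fB_u$ can be $\peq$-above $w$. I must check that those roots are not also $\rel$-above $w$ in the $\prec$ sense, and I expect this to follow from $w$ not being $\peq$-related to its graft.

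Second case: every breaking point satisfies \ref{itBud}. Then each $u\in{\rm Break}(v)$ is a bud, and $\fB_u=\fA_u\sqcup_u\fB'_u$ with $R_u$ pivotal, so the whole ladder unwinds into a graft at the highest breaking point $w$ below $v$. The graft $\fA\sqcup_w\fB'_w$ is a potential resolution with $w$ a bud. If $\rho(w)=\rho(v)$, it is internal. Otherwise the source satisfies $u\equiv w$, and the initial potential resolution condition $h(u)\ll h(t)$ for buds makes it nuclear. In the nuclear case the grafted root is $\peq$-above a point in $[w]$ that is also $\rel$-related to it, so the resolution is a stub; this matches the exception in the statement. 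In all other cases liveness is checked as before.

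The main obstacle is confirming that the order built from the ladder is genuinely pivotal for the graft at $w$. Concretely, every strict $\prec$-pair must start in $\dom\fA$ and end in $[w]\cup\dom\fB$, and the union of the local relations must remain expansive at the grafting point. My first attempt will be to use Lemma~\ref{lemmregionProps}\eqref{itregionTwo} on the bud cluster to repair backward confluence at $w$.
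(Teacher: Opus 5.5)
Your first case has a genuine gap, and it comes from the choice of $w$. You graft at the immediate tree-predecessor of the least strict breaking point $u$ (so $w\rel^1 u$). A potential resolution must be grafted at a bud, and this $w$ is in general not one. The function $\rho$ is constant from the preceding element $w^*$ of ${\rm Break}(v)$ up to, but excluding, $u$. That stretch can contain several clusters; petal points, for instance, lie strictly above their bud in the same region. So your $w$ sits at the top of the stretch, whereas the bud is $w^*$ at its bottom.

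The later steps do not compensate for this:
\begin{itemize}
\item Lemma~\ref{lemmClusConf}\eqref{itClusConfBack}, applied to $h(w)\rel h(u)\llsim h(t)$, produces a $\peq$-larger \emph{predecessor} $\Psi'\rel h(t)$ with $[h(w)]\peq[\Psi']$. It does not produce a $\rel$-successor of $h(w)$ above $t$, so graftability is not established.
\item Proposition~\ref{propExpandingClos} only says that unions and compositions of expansive relations remain expansive. It cannot supply missing backward-confluence witnesses at the grafting point.
\item The repair you anticipate through Lemma~\ref{lemmregionProps} is only available in a bud cluster.
\end{itemize}

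The missing idea is that minimality of $u$ is essential rather than cosmetic. Take $w$ to be the predecessor of $u$ in ${\rm Break}(v)$, not in the tree. Because $u$ is the least strict rung, the rung at $w$ is not strict, so clause~\ref{itBud} of Definition~\ref{defLadder} applies to it: $w$ is a bud, $\fB_w=\fA_w\sqcup_w\fB_u$, and $R_w$ is pivotal for this graft. This is exactly what makes $(\fB_u,w)$ a potential resolution, with all the confluence work already done inside Lemma~\ref{lemmResInductive}. Strictness of $(\fB_u,R_u)$ and $\rho(u)\neq\rho(w)$ then make it budding or progressive, and $w\srel u$.

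Your second case also splits on the wrong quantity. If $w$ is the greatest element of ${\rm Break}(v)$, there are no breaking points in $(w,v]$, so $\rho(w)=\rho(v)$ holds automatically. Your test therefore never fails, and your ``otherwise'' branch is empty. The paper splits instead on the target $t$ of the initial potential resolution. If $\rho(t)=\rho(w)$, the resolution is internal. Otherwise $\rho(w)\ll\rho(t)$, giving a progressive resolution (nuclear when the source is $\equiv w$, i.e.\ $w\equiv v$). As written, your argument would label the case $w\srel v$, $\rho(w)\ll\rho(t)$ as internal, which it is not. Your derivation of the nuclear case also rests on a hypothesis that cannot occur.
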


\begin{proof}
First assume that there is such a $u$ and choose it to be $\rrel$-minimal and let $w$ be the predecessor of $u$ in ${\rm Break}(v)$.
According to Definition~\ref{defLadder}, $w$ is a bud, $\fB_w = \fA_w \sqcup_u \fB_u$, and $R_w$ is pivotal for $\fB_w$.
This implies that $(\fB_u,w)$ is a potential resolution for $\delta$ and, since $\rho(u)\neq\rho(w)$, either $u$ is not a bud and the resolution is budding or $\rho(u) \ll\rho(t) $ and it is progressive.

Otherwise, there is no such $u$.
Let $w$ be a maximal element of ${\rm Break}(v)$.
Since $(\fB_w,R_w)$ is not strict, $w$ is a bud, $\fB_w = \fA_w \sqcup_u \fB'$, and $R_w$ is pivotal for $\fB_w$, where $(\fB',R')$ is an initial potential resolution for $\delta$, with resolution point $t$.
Note that in this case, $\rho(v) =\rho(w) $.
If also $\rho(t) = \rho(w)$, the resolution is internal.
Otherwise, $\rho(v ) = \rho(w) \ll \rho(t) $, so it is progressive.
\end{proof}

Aside from the general case described above, the following special case will pop up.

\begin{corollary}\label{corStrategicSuccessor}
If $\Sigma\Subset\lanfull$ and $\delta=(v,\varphi)$ is a defect of a canonical  $\Sigma$-tree $\fA$ and $w\rel^1 w' \srel u \rrel v$ are such that $w$ and $u$ are breaking points, $w$ is a bud, and there are no breaking points in $(w,v)$ (hence, in particular, $\rho(w') = \rho(w)$), then there is a strategic potential resolution $w^*\rrel w$ for $\delta$.
\end{corollary}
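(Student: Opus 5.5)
We would derive Corollary~\ref{corStrategicSuccessor} from Proposition~\ref{propExistsStrategic}, applied to a ladder for $\delta$ given by Lemma~\ref{lemmResInductive}. The only new ingredient is to show that the strategic resolution can be placed at or below $w$, rather than merely somewhere below $v$. Fix a ladder $\vec{\fB}=\{(\fB_x,R_x):x\in{\rm Break}(v)\}$ for $\delta$.

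The hypotheses say that $w\rel^1 w'$, that $w$ and $u$ are breaking points with $w'\srel u$, and that there are no breaking points strictly between $w$ and $v$. Hence ${\rm Break}(v)\cap(w,v]$ contains $u$, and every such element is $\rrel$-above $w'$; since $u$ itself is a breaking point, some least element $u'$ of this set exists. Because no breaking points lie in $(w,v)$, we get $u'=u$ (up to $\equiv$), and this can only occur when $u\equiv v$. We also have $\rho(w')=\rho(w)$, since $w'$ is not a breaking point.

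We then split on the ladder entry at $u$.

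\textbf{Case 1: $(\fB_u,R_u)$ is strict.} The final clause of Proposition~\ref{propExistsStrategic} then supplies a strategic potential resolution $w^*\srel u$. It remains to place $w^*$ below $w$. In that proof $w^*$ is the ${\rm Break}(v)$-predecessor of the $\rrel$-minimal strict entry. Since all breaking points below $u$ lie in $\rrel(w)$, such a predecessor satisfies $w^*\rrel w$. If the minimal strict entry is $u$ itself, that predecessor is $w$, because $w$ is a bud and ${\rm Break}(v)\cap(w,v)=\varnothing$.

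\textbf{Case 2: $(\fB_u,R_u)$ is not strict.} Then clause~\ref{itBud} of Definition~\ref{defLadder} holds at $u$. We next examine clause~\ref{itBud} or~\ref{itStrict} at $w$.
\begin{itemize}
\item If clause~\ref{itStrict} holds at $w$, we again apply the final clause of Proposition~\ref{propExistsStrategic}, which yields $w^*\srel w$.
\item If clause~\ref{itBud} holds at $w$, then $\fB_w=\fA_w\sqcup_w\fB_u$ and $R_w$ is pivotal. So $(\fB_u,w)$ is a potential resolution, with source $w'$ (because $w\rel^1 w'\rrel v$) and target the root $t$ of $\fB_u$.
\end{itemize}
In the second subcase, $t$ is a bud and $h(w')\llsim h(t)$, since $\fB_u$ is a local potential resolution at $u$ and $w'\srel u$. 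This gives $\rho(w')\lleq\rho(t)$. If $\rho(w')\ll\rho(t)$, the resolution is progressive. Otherwise $\rho(t)=\rho(w')=\rho(w)$. The resolution is then budding if $w'$ is not a bud. If $w'$ is a bud, we have $\rho(w)=\rho(w')=\rho(v)$, where the last equality uses the absence of breaking points in $(w,v)$; so we are in the internal case.

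\textbf{Main obstacle.} The delicate step is this last comparison of $\rho(w')$ with $\rho(t)$. Region monotonicity along $\llsim$ has to be deduced from the definitions of regions as $\llsim$-Final clusters. We would also need to check that a non-bud $w'$ with $\rho(w')=\rho(t)$ genuinely meets the budding clause. I expect to use Lemma~\ref{lemmregionProps} here, to see that $\hat\rho(w)\rel h(w')$, so that regions compare correctly.
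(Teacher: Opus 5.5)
Your route is the paper's: take a ladder from Lemma~\ref{lemmResInductive} and look at the rung at $w$. If that rung is strict, Proposition~\ref{propExistsStrategic} gives $w^*\srel w$. Otherwise $(\fB_u,w)$ is a potential resolution with source $w'$ and target the root $t$ of $\fB_u$.

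\textbf{The gap} is in how you classify this last resolution. You allow $\rho(t)=\rho(w')$, and when $w'$ is a bud you call the resolution internal because ``$\rho(w')=\rho(v)$ by the absence of breaking points in $(w,v)$''. That equality is false.
\begin{itemize}
\item You derived yourself that $u\equiv v$, so $\rho(v)=\rho(u)$.
\item The point $u$ is a breaking point. Its immediate predecessor $u''$ with $\rho(u'')\neq\rho(u)$ satisfies $w'\rrel u''$.
\item Every point from $w'$ up to $u''$ lies in $(w,v)$, so none is a breaking point. Hence $\rho$ is constant from $w$ to $u''$, and $\rho(u'')=\rho(w)$.
\end{itemize}
So $\rho(v)=\rho(u)\neq\rho(w)$, which is exactly the failure of the internal condition.

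\textbf{The missing idea} is to use this jump positively, as the paper does. Since $h(w)\llsim h(u)$, you get $\rho(w')=\rho(w)\ll\rho(u)$. Since $\fB_u$ is a local potential resolution at $u$, $h(u)\llsim h(t)$, so $\rho(u)\lleq\rho(t)$. Hence $\rho(w')\ll\rho(t)$, and because $w\rel^1 w'$ gives $w'\not\equiv w$, the resolution $(\fB_u,w)$ is always progressive.

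Your budding/internal branch is therefore vacuous, but your argument does not show this, and it argues the internal subcase incorrectly. As written, you use that $u$ is a breaking point only to find the next rung of the ladder, not for the region jump the corollary depends on.

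\textbf{Your Case~1 is unnecessary.} Clause~\ref{itBud} at $w$ grafts $\fB'_w=\fB_u$ whether or not $(\fB_u,R_u)$ is strict. Also, $h(u)\llsim h(t)$ holds for every local potential resolution at $u$. So there is no reason to split on the rung at $u$.

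Splitting at $u$ also forces you into the proof of Proposition~\ref{propExistsStrategic}, because its statement only yields $w^*\srel u$, which does not place $w^*$ at or below $w$. Splitting at $w$, as the paper does, lets you apply the statement directly and get $w^*\srel w$.
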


\begin{proof}
Using Lemma~\ref{lemmResInductive}, let $\vec{\fB}$ be a ladder for $\delta$, so that either $(\fB_w,R_w) $ is strict or it is of the form $\fB_w = \fA_w \sqcup_u \fB_u$, with $R_w$  pivotal for $\fB_w$.
In the first case, we use Proposition~\ref{propExistsStrategic} to find suitable $w^*\srel w$.
Otherwise, using $w'$ as source and a root $t$ of $\fB_u$ as target, since $\rho(w') = \rho(w ) \ll \rho(u) \llsim \rho(t)$, we see that $(\fB_u,w)$ is a progressive potential resolution.
\end{proof}

\section{Stable Quasimodels}

At this point, we know that any defect may be resolved, suggesting a strategy for successively resolving the defects of an initial canonical sprout.
The issue is that defect resolution does not always commute, and, by resolving one defect $\delta$, we may `reactivate' another defect, $\delta'$, given that the intuitionistic order previously resolving it now fails to be expansive.
In order to remedy this, in this section, we will identify combinatorial conditions which ensure that this will never be the case.
Recall that we previously defined resolutions to be live or stubs; after a resolution $(\fB,w)$ is applied, $w$ will also be live or a stub, respectively.

\begin{definition}
Let $\Sigma\Subset\lanfull$ and $\fA$ be a canonical $\Sigma$-quasimodel.
We say that $w\in \dom{\fA}$ is {\em a stub} if there is $y\in\dom\fA$ with $w\prec y$ and $w\rel y$, otherwise $w$ is {\em live.}
We say that $w $ is {\em admissible for $ \fA$} if every $w'\rrel w$ is live and, whenever $u \prec u'$, it follows that $ w\sqcap u \rrel u'\sqcap u  $.

We say that $\fA$ is {\em stable} if $\peq_\fA$ is pointwise pivotal and whenever $\delta = (v,\varphi)$ is a defect of $\fA$ and $u'\succ u$, it follows that $\delta$ has a  potential resolution $w$ which is admissible for $\peq_\fA$.

If $ (\fB,w)$ is such a potential resolution for $\delta$ with $\peq_\fB$ pivotal for $ \fA \sqcup_w \fB$, we define
${\peq_\fA} \sqcup_w {\peq_\fB} = {\peq_\fB} \curlyvee \bigcurlyvee \mathfrak R$, where $\mathfrak R$ is the set of all  pivotal relations $R\subseteq {\peq_\fA} \cup {\peq_\fB}$ which are $\Sigma$-intuitionistic on ${\peq_\fA} \sqcup_w {\peq_\fB} $ (see Lemma~\ref{lemmExpJoin}) and equip $ \fA \sqcup_w \fB$ with ${\peq_\fA} \sqcup_w {\peq_\fB}$. 
\end{definition}

By always resolving defects using admissible resolutions, we can ensure that the original intuitionistic order continues to be expansive, as witnessed by the following.

\begin{lemma}\label{lemmIsOrd}
Let $\Sigma\Subset\lanfull$ and suppose that $\fA \sqcup_ w \fB$ can be grafted.
Suppose that $\peq_\fA$ is a pivotal relation on $\fA$ such that $w$ is admissible for $\peq_\fA$.
Then, $ {\peq_\fA} $ can be extended to a pivotal relation on $\fA \sqcup_ w \fB$.
\end{lemma}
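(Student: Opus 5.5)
The natural approach is to extend $\peq_\fA$ by the least expansive preorder that contains it, and then check that nothing new has been added among points of $\fA$. The intended candidate is ${\peq'} := {\peq_\fA}\curlyvee \mathbb I_{\dom\fB}$, which exists by Lemma~\ref{lemmExpJoin}. It is cleaner, however, to build it explicitly.

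\textbf{Construction.} For every pair $u\prec_\fA u'$ and every $y\in\dom\fB$ we need forward confluence. The pairs $u \rrel w$ are exactly those with $u\rel y$ after grafting, since in $\fA\sqcup_w\fB$ every $x\rrel_\fA w$ sees all of $\dom\fB$.

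\begin{itemize}
\item If $u\rrel w$ and $u'\rrel w$, then $u'\rel y$ as well, so we may relate $y$ to itself. Thus we set $y\peq' y$ for all $y\in\dom\fB$.
\item The problematic case is $u\rrel w$ with $u'\not\rrel w$. This is precisely where admissibility enters. Since $w$ is admissible, $w\sqcap u\rrel u\sqcap u'$, and because $u\rrel w$ we have $w\sqcap u=u$. Hence $u\rrel u\sqcap u'\rrel u'$. By pivotality with pivot $p$, we also have $p\rrel u\sqcap u'$.
\end{itemize}

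Nothing is needed for backward confluence at points of $\fA$ relative to $\fB$, since points of $\fB$ only see points of $\fB$. So the intended extension is
\[
{\peq'} = {\peq_\fA}\cup \mathbb I_{\dom\fB}.
\]

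\textbf{Verification.} We must check three things.
\begin{enumerate}
\item $\peq'$ is a preorder. This is immediate.
\item $\peq'$ is $\Sigma$-intuitionistic. Labels are compared only along the old pairs or along identities, so this holds.
\item $\peq'$ is pivotal with the same pivot. Every strict pair comes from $\peq_\fA$, and meets computed in $\fA\sqcup_w\fB$ agree with those in $\fA$ for points of $\fA$, since grafting only adds new leaves above $w$.
\end{enumerate}

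For expansiveness, we run the case analysis on $x\rel x'$ in the grafted frame with $x\peq' z$.
\begin{itemize}
\item If $x,x'\in\dom\fA$, we use confluence in $\fA$.
\item If $x\in\dom\fB$, then $z=x$ and we take $x'$ itself.
\item If $x\in\dom\fA$ and $x'\in\dom\fB$, then $x\rrel w$. We need $z'$ with $z\rel z'$ and $x'\peq' z'$, i.e.\ we need $z\rrel w$ so that we may take $z'=x'$.
\end{itemize}

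\textbf{Main obstacle.} This last point is the crux: from $x\rrel w$ and $x\peq_\fA z$ we must derive $z\rrel w$, and I expect it to be the hardest step. The plan for it is as follows.
\begin{itemize}
\item Admissibility gives $x = w\sqcap x\rrel x\sqcap z\rrel x$, so $x\sqcap z\equiv x$. Thus $x$ and $z$ lie on a common branch, with $x\rrel z$ up to cluster equivalence.
\item We then need the liveness clause. If $x\prec z$ and $x\rel z$, then $x$ would be a stub, yet every point $\rrel w$ is live by admissibility. Hence $z\equiv x$ (or $z=x$), so $z\rrel w$.
\end{itemize}
If the tree geometry forces $z$ strictly above $x$ off the branch to $w$, this is exactly what liveness excludes. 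That is the step I would scrutinise most carefully, possibly adding grafted copies mirroring $\fB$ at $z$ as a fallback, in the style of Lemma~\ref{lemmGraftExt}.

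Backward confluence has one remaining case: $z\rel z'$ with $z'\in\dom\fB$ and $x\peq' z$. This is handled symmetrically, since $z\rrel w$ and $x\peq_\fA z$ give $x\rrel z$ on the same branch, so $x\rel z'$ and we can take $x'=z'$.
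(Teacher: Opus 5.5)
Your proof is essentially the paper's: it also extends by ${\peq_\fA}\cup\mathbb I_{\dom\fB}$, and it rules out the only nontrivial forward-confluence case ($x\rrel w$, $x\prec_\fA z$, $x\rel y\in\dom\fB$) exactly as you do. Admissibility gives $x=w\sqcap x\rrel z\sqcap x\rrel z$, hence $x\rel z$, so $x$ would be a stub below $w$. This shows $x\prec_\fA z$ is impossible outright, so $z=x$, and no fallback grafting is needed.

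One correction: your final paragraph does not address backward confluence. That condition says that $x\rel y\peq' y'$ implies $x\peq' x'\rel y'$ for some $x'$. It is immediate, because any nontrivial pair $y\prec' y'$ lies in $\dom\fA$. This forces $x\rel_\fA y$, so the witness from $\peq_\fA$ works; this is the paper's one-line argument.

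The configuration $x\peq' z\rel z'$ with $z'\in\dom\fB$ that you treat is not required, and the claim you use there is false. Let $x$ and $z=w$ be distinct leaves directly above an irreflexive root, with $x\prec_\fA z$ the only nontrivial pair. Then $w$ is admissible, but $x\not\rrel z$, and no $x'$ with $x\rel x'\peq' z'$ exists.
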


\begin{proof}
Let $v$ be a pivot of $\peq_\fA$.
Let ${\peq} :={{\peq_\fA} \cup { \mathbb I_{\dom{\fB} }}}$.
We check that $\peq $ is a pivotal relation on ${\peq_\fA} \sqcup_w {\peq_\fB}  $.

To see that it is forward confluent, suppose that $x'\succ x \rel y$.
Then, $x\prec_\fA x'$, so $x',x\in \dom{\fA}$.
If $y\in\dom\fA$, we use the assumption that $\peq_v$ was forward confluent and use the same witness.
If $y\in\dom\fB$, then $x\rrel w$, so $ x = w\sqcap x \rrel x'\sqcap x \rrel x'  $; but, $x\neq x'$, so $x\rel x'$.
This implies that $x$ is a stub, contradicting the assumption that $w$ is admissible.

For backward confluence, suppose that $x \rel y \prec y'$.
If all three elements are in $\dom\fA$, we use the witness for $\peq_\fA$.
Now, for this to fail, we must have $y\in\dom\fA$ but $y'\in\dom\fB$, but this cannot occur by the way we defined $\prec$.
\end{proof}

When grafting a new sprout $\fB$ onto $\fA$, we can take the join of the preexisting $\peq_\fA$ with the new pivotal order $\prec_\fB$.
As the following lemma shows, this join can be computed in a straightforward way.

\begin{lemma}\label{lemmCompose}
Let $\Sigma\Subset\lanfull$ and suppose that $\fA \sqcup_ w \fB$ can be grafted.
Suppose that $\peq_\fA$ is a pointwise pivotal relation on $\fA$ such that $w$ is admissible for $\peq_\fA$ and $\peq_\fB$ is pivotal for $\fA \sqcup_w\fB$.
Then, ${\peq_\fA} \sqcup_w {\peq_\fB} = {\peq_\fB} \circ {\peq_\fA}$.
\end{lemma}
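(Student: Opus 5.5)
The plan is to prove the two inclusions separately. Write $\preceq^\ast$ for ${\peq_\fB}\circ{\peq_\fA}$ and $\preceq^\vee$ for ${\peq_\fA}\sqcup_w{\peq_\fB}$.

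\emph{Step 1: $\preceq^\ast\subseteq\preceq^\vee$.} By Lemma~\ref{lemmIsOrd}, $\peq_\fA$ extends (by adjoining the identity on $\dom\fB$) to a pivotal relation on $\fA\sqcup_w\fB$. Because $\peq_\fA$ is pointwise pivotal, each pair $x\peq_\fA y$ is witnessed by a pivotal ${\peq'}\subseteq{\peq_\fA}$. The same argument as in Lemma~\ref{lemmIsOrd}, which uses admissibility of $w$ to rule out stubs below $w$, shows that $\peq'$ remains $\Sigma$-intuitionistic after grafting, so $\peq'\in\mathfrak R$. Hence ${\peq_\fA}\subseteq\preceq^\vee$. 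We also have ${\peq_\fB}\subseteq\preceq^\vee$ by definition. Since $\preceq^\vee$ is transitive, this gives $\preceq^\ast\subseteq\preceq^\vee$.

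\emph{Step 2: minimality.} For the converse it suffices, by the minimality clause of Lemma~\ref{lemmExpJoin}, to show that $\preceq^\ast$ is itself an expansive preorder containing every $R\in\mathfrak R$ and $\peq_\fB$. Expansiveness is immediate from Proposition~\ref{propExpandingClos}, since both factors are expansive on $\fA\sqcup_w\fB$: $\peq_\fA$ via Lemma~\ref{lemmIsOrd}, and $\peq_\fB$ by hypothesis. The label condition ($\ell(x){\upharpoonright}\Sigma\peqT\ell(y){\upharpoonright}\Sigma$) is preserved under composition. Reflexivity is clear. Containment of each $R\in\mathfrak R$ holds because $R\subseteq{\peq_\fA}\cup{\peq_\fB}$, and each of these lies in the composition by reflexivity of the other factor.

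\emph{Step 3: transitivity (main obstacle).} The real work is showing that $\preceq^\ast$ is transitive, i.e.\ that ${\peq_\fA}\circ{\peq_\fB}\subseteq{\peq_\fB}\circ{\peq_\fA}$. Take $x\peq_\fB y\peq_\fA z$ with both steps strict. Since $\peq_\fB$ is pivotal for $\fA\sqcup_w\fB$, strictness forces $x\in\dom\fA$ and $y\in[w]\cup\dom\fB$. On the other hand, $y\prec_\fA z$ forces $y\in\dom\fA$, so $y\in[w]$. Admissibility of $w$ (the condition $w\sqcap u\rrel u'\sqcap u$ whenever $u\prec u'$) gives $w\rrel y\sqcap z$, so $z$ lies above the cluster $[w]$. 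I would then split into two cases.

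In the first case $z\in[w]$. I would try to show $x\peq_\fA z$ directly, using pointwise pivotality of $\peq_\fA$ together with the fact that every $\peq_\fB$-step out of $\fA$ lands in $[w]$ with pivot $w$.

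In the second case $z$ is a strict $\rel$-successor of $w$ in $\fA$. Then $y\prec z$ with $y\in[w]\rel z$ would make $y$, and hence $w$, a stub, contradicting admissibility, which requires every $w'\rrel w$ to be live.

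I expect the first case to be the delicate one. My plan there is to use the confluence of $\peq_\fA$ from Lemma~\ref{lemmIsOrd} to move the $\peq_\fA$-step below the $\peq_\fB$-step, that is, to find $x'$ with $x\peq_\fA x'\peq_\fB z$. If this reordering fails, the fallback is to add $\peq_\fB\circ\peq_\fA\circ\peq_\fB$ and show that it collapses, again using the fact that $\peq_\fB$ only relates $\fA$ to $[w]\cup\dom\fB$.
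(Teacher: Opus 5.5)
Your Steps~1 and~2 are fine. Using Proposition~\ref{propExpandingClos} for expansiveness of the composite is cleaner than the paper's appeal to Lemma~\ref{lemmCurlyvee}, which is itself proved using this lemma. The gap is in Step~3: you leave the case $z\in[w]$ open, and the plan you sketch for it does not work. Confluence of $\peq_\fA$ only lets you trade a $\peq_\fA$-step against a $\rel$-step, not against a $\peq_\fB$-step, so it does not produce an $x'$ with $x\peq_\fA x'\peq_\fB z$. Nor is there any reason to expect $x\peq_\fA z$, since $x$ is related to $y$ only through $\peq_\fB$. The fallback of adjoining ${\peq_\fB}\circ{\peq_\fA}\circ{\peq_\fB}$ would not prove the statement either, because the lemma asserts that the two-fold composite is already the join.

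The missing observation is that this case is vacuous, for the same reason as your second case. If $y,z\in[w]$ and $y\neq z$, then $y\rrel z$ together with $y\neq z$ gives $y\rel z$ directly from the definition of $\equiv$. Combined with $y\prec_\fA z$, this makes $y$ a stub, and since $y\rrel w$ this contradicts admissibility. In fact your two cases merge. Admissibility gives $y\equiv w\rrel z$, so any $z\neq y$ yields $y\rel z$ and hence a stub. So $\peq_\fA$ is the identity on $[w]$, and by Lemma~\ref{lemmIsOrd} it is also the identity on $\dom\fB$.

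This is exactly the paper's argument. In a chain $x_0\peq_\fA y_0\peq_\fB x_1\peq_\fA y_1\peq_\fB x_2$, pivotality of $\peq_\fB$ gives either $x_1=y_0$ or $x_1\in[w]\cup\dom\fB$. In the first case the two $\peq_\fA$-steps merge by transitivity of $\peq_\fA$. In the second case $x_1=y_1$, so the two $\peq_\fB$-steps merge by transitivity of $\peq_\fB$.
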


\begin{proof}
Since $\peq_\fA$ is a $\Sigma$-intuitionistic order on $\fA \sqcup_ w \fB$ by Lemma~\ref{lemmIsOrd}, it follows from Lemma~\ref{lemmCurlyvee} that $\peq_\fB \circ \peq_\fA $ is expansive.
So, in order to see that ${\peq } = { \peq_\fA \curlyvee \peq_\fB  }: = (  \peq_\fB \circ \peq_\fA )^*$, it suffices to show that $\peq_\fB \circ \peq_\fA  $ is transitive.

If $x_0 \mathrel{(\peq_\fB \circ \peq_\fA) ^2} x_2$, by definition, there is a sequence
\[   x _0 \peq_\fA  y_0 \peq_\fB x_1 \peq_\fA y_1 \peq_\fB  x_2. \]
From $y_0 \peq_\fB x_1$, we obtain $x_1 = y_0$, $x_1\in[w]$, or  $x_1\in \dom{\fB}$.

If $x_1 = y_0$, then
\[   x _0 \peq_\fA  y_0   \peq_\fA y_1 \peq_\fB  x_2. \]
By transitivity of $\peq_\fA$, $ x _0 \peq_\fA    y_1 \peq_\fB  x_2$ and $x_0 \mathrel{ \peq_\fB \circ \peq_\fA   } x_2$.
Since $w$ is admissible, $x_1$ is not a stub, so $x_1\in[w]$ entails $x_1 = y_1$.
Similarly, if $x_1\in \dom{\fB}$, we must have that $x_1=y_1 $.
We thus obtain
\[   x _0 \peq_\fA  y_0 \peq_\fB x_1   \peq_\fB  x_2 \]
which, now by transitivity of $\peq_\fB$, yields $x_0 \mathrel{ \peq_\fB \circ \peq_\fA   } x_2$ once again.
We conclude that $  \mathrel{ \peq_\fB \circ \peq_\fA   } $ is transitive, as claimed.
\end{proof}

It remains to check that $  \mathrel{ \peq_\fB \circ \peq_\fA   } $ is pivotal.
For this, it will be convenient to first establish some order-theoretic considerations.

\begin{lemma}\label{lemmJoins}
Let $\Sigma\Subset\lanfull$ and suppose that $\fA \sqcup_ w \fB$ can be grafted.
Suppose that $\peq_\fA$ is a pivotal relation on $\fA$ with pivot $w'$ such that $w$ is admissible for $\peq_\fA$ and $\peq_\fB$ is pivotal for $\fA \sqcup_w\fB$.
Suppose that $a,b,c$ are such that $a \peq _\fA b \peq_\fB c$.

Then, $w\sqcap w' \rrel a\sqcap c  $.
If, moreover, $a \prec _\fA b \prec _\fB c$, then either
\begin{enumerate}

\item $w\sqcap w' = w' = a \sqcap b = a\sqcap w = a \sqcap c$, or

\item $w\sqcap w' = w = b \sqcap c = w' \sqcap c  = a \sqcap c$.

\end{enumerate}

\end{lemma}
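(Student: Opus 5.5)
The proof is purely order-theoretic in the tree-like frame $\fA\sqcup_w\fB$. It rests on three facts about meets in a tree. First, for any three points, two of the three meets $x\sqcap y$, $y\sqcap z$, $x\sqcap z$ coincide and the third lies $\rrel$-above them. Second, $x\rrel y$ implies $x\sqcap y = x$. Third, every element of $\dom\fB$ lies strictly above $w$, so $u\sqcap y = u\sqcap w$ whenever $u\in\dom\fA$ and $y\in\dom\fB$.

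The definitions give the following. Pivotality of $\peq_\fA$ with pivot $w'$ means that $a\prec_\fA b$ implies $w'\rrel a\sqcap b$. Pivotality of $\peq_\fB$ for $\fA\sqcup_w\fB$ means that $b\prec_\fB c$ implies $b\in\dom\fA$, $c\in[w]\cup\dom\fB$, and $w\rrel b\sqcap c$. Admissibility of $w$ gives $w\sqcap a\rrel a\sqcap b$ whenever $a\prec b$.

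For the first claim, I would dispose of the degenerate cases first. If $a=b$ or $b=c$, the claim reduces to a single pivotality condition: $w\sqcap w'\rrel w'\rrel a\sqcap b$, or $w\sqcap w'\rrel w\rrel b\sqcap c$. In the main case, $a\prec_\fA b\prec_\fB c$, we have $w'\rrel a\sqcap b$ and $w\rrel b\sqcap c$. By the three-point property, $a\sqcap c$ equals $a\sqcap b$ or $b\sqcap c$, or else $a\sqcap c$ lies above both of them, in which case those two meets coincide. In each case $a\sqcap c$ lies above $w'$ or above $w$, and hence above $w\sqcap w'$.

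For the dichotomy, I would also use that $w$ and $w'$ are comparable. Both lie below $b$: we have $w'\rrel a\sqcap b\rrel b$ and $w\rrel b\sqcap c\rrel b$. So $w\sqcap w'\in\{w,w'\}$, and I split into two cases.

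In the case $w'\rrel w$, I aim for item 1. Admissibility gives $w\sqcap a\rrel a\sqcap b$. On the other hand, $w'\rrel a\sqcap b\rrel a$ together with $w'\rrel w$ puts $w'\rrel a\sqcap w$. To pin everything to $w'$, I would show that $a\sqcap b$ cannot lie strictly above $w'$. The reason is that the pivot is taken minimal, or equivalently that $a\prec b$ is witnessed across the pivot cluster, so $a\sqcap b = w'$. Then $a\sqcap w = w'$ follows from the sandwich $w'\rrel a\sqcap w\rrel a\sqcap b = w'$. Finally, since $c$ lies in $[w]\cup\dom\fB$, $a\sqcap c = a\sqcap w$.

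In the case $w\srel w'$, I aim for item 2. Here $b\sqcap c = w$, because $c$ is in $[w]$ or in $\dom\fB$ and $w\rrel b$. Also $w'\sqcap c = w$, since $w\srel w'$ and $c$ sits above $w$ on a branch through $\fB$. For $a\sqcap c$ I use $w\srel w'\rrel a$, which gives $a\sqcap c = a\sqcap w = w$.

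The main obstacle is the equality $a\sqcap b = w'$ in the first case. As stated, pivotality only gives $w'\rrel a\sqcap b$, so I would need to read ``pivot'' as a least such point, or else exploit admissibility ($w\sqcap a\rrel a\sqcap b$) together with $w'\rrel w$ to force the equality. The points are only determined up to clusters, so all equalities must be read modulo $\equiv$, matching the paper's convention of writing $u=w\sqcap v$ for membership in the meet cluster.
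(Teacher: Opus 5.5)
Your decomposition is the paper's: dispose of the degenerate cases, note that $w$ and $w'$ both lie below $b$ and so are comparable, split on which is lower, and compute meets in the tree. Your treatment of the first claim and of the case $w\srel w'$ is correct. The step you flag, $a\sqcap b=w'$ in the case $w'\srel w$, is a genuine gap. Neither of your remedies closes it, because the dichotomy is false under the stated hypotheses.

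Here is a counterexample. Take irreflexive points $r\srel w\srel a$ and $w\srel b$ with $a,b$ incomparable, graft a single point $c$ onto $w$, and map every node to one fixed $\rel_{\rm c}$-reflexive prime theory, so that all label conditions hold trivially. Let $\peq_\fA$ be the identity plus $a\prec b$, with pivot $w'=r$, and let $\peq_\fB$ be the identity plus $b\prec c$, with pivot $w$. Both relations are expansive: $a$ and $b$ have no successors, and every predecessor of $a$ (resp.\ $b$) is also a predecessor of $b$ (resp.\ $c$). Neither $r$ nor $w$ has a strict $\prec$-successor, so both are live, and $w\sqcap a=w=b\sqcap a$; hence $w$ is admissible. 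Yet $a\sqcap b=a\sqcap w=a\sqcap c=w\neq r=w\sqcap w'$, so item 1 fails, and $w\sqcap w'\neq w$, so item 2 fails too.

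Admissibility holds in this example, so it cannot force the equality. Reading the pivot as ``as high as possible'' also fails once $\peq_\fA$ has several strict pairs. Add incomparable leaves $a',b'$ directly above $r$ with $a'\prec b'$: all hypotheses still hold, but now every pivot satisfies $w'\rrel r$.

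The paper's proof has the same hole. In the case $w'\rel w$ it asserts $w'=a\sqcap b$, but the tree property it cites only relates $a\sqcap b$, $a\sqcap w$ and $a\sqcap c$ to one another, never to $w'$. The right repair is to add the hypothesis $w'=a\sqcap b$. This is exactly what Lemma~\ref{lemmCurlyvee} supplies, since it applies the lemma to a witness $\peq_v$ whose pivot is $v=x\sqcap z$. With that hypothesis your sandwich argument is complete.

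Two smaller adjustments:
\begin{itemize}
\item Put the subcase $w\equiv w'$ into item 2. There $b\sqcap c=w'\sqcap c=a\sqcap c=w$ holds with no extra hypothesis, since $w\rrel a$, $w\rrel b$ and $c\in[w]\cup\dom\fB$.
\item The first claim needs at least one of the two steps to be strict. For $a=b=c$ it fails whenever $a$ is not above $w\sqcap w'$. Both your degenerate-case argument and the paper's silently assume a strict step, and one does hold in the application.
\end{itemize}
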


\begin{proof}
If $a=b$ then $w\sqcap w' \rrel w \rrel a \sqcap c  $ using the fact that $\peq_\fB$ is pivotal and if $b=c$ then $w\sqcap w' \rrel w' \rrel a \sqcap c  $ using the fact that $\peq_\fA$ is, so we may assume that $a \prec _\fA b \prec _\fB c$.

We have that $w'\rrel a\sqcap b$ and hence $w'\rel b$ by the definition of a pivot.
Since  $b \prec_\fB c$, we also have $w\rel b$, hence since $\fA\sqcup_w\fB $ is tree-like, $w \rrel w' $ or $w' \rrel w$.
If $w' \rel w$, then
\[w' = a \sqcap b = a\sqcap w = a \sqcap c. \] 			
Here, we are using a general property that on a tree, if $x\sqcap y \rel z\rrel y$, then $x\sqcap y = x\sqcap z$, and, similarly, if $x\sqcap z \rel z\rrel y$, then $x\sqcap y = x\sqcap z$.

By similar reasoning, if $w \rel w' $, then
\[w = b \sqcap c = w' \sqcap c  = a \sqcap c.\]
If $w' = w$, then  $ w = a\sqcap c$ because $a\in\dom\fA$ and $c\in\dom\fB$, so there can be no greater joint lower bound.
\end{proof}

\begin{lemma}\label{lemmCurlyvee}
Let $\Sigma\Subset\lanfull$ and suppose that $\fA \sqcup_ w \fB$ can be grafted.
Suppose that $\peq_\fA$ is a pointwise pivotal relation on $\fA$ such that $w$ is admissible for $\peq_\fA$ and $\peq_\fB$ is pivotal for $\fA \sqcup_w\fB$.
Then, ${\peq_\fA} \sqcup_w {\peq_\fB} $ is pointwise pivotal.
\end{lemma}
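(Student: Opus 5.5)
By Lemma~\ref{lemmCompose}, ${\peq_\fA} \sqcup_w {\peq_\fB} = {\peq_\fB}\circ{\peq_\fA}$. So it suffices to take an arbitrary pair $x \mathrel{(\peq_\fB\circ\peq_\fA)} z$ and exhibit a pivotal order ${\peq'}\subseteq {\peq_\fB}\circ{\peq_\fA}$ with pivot $x\sqcap z$ and $x\peq' z$. Fix a witness $y$ with $x\peq_\fA y\peq_\fB z$. Since $\peq_\fA$ is pointwise pivotal, there is a pivotal ${\peq_\fA'}\subseteq{\peq_\fA}$ with pivot $w':=x\sqcap y$ and $x\peq_\fA' y$. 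The candidate is ${\peq'}:={\peq_\fB}\circ{\peq'_\fA}$. This is contained in ${\peq_\fB}\circ{\peq_\fA}$, relates $x$ to $z$, and is $\Sigma$-intuitionistic. For the last point, $w$ remains admissible for the smaller order $\peq'_\fA$, since liveness of the points below $w$ and the condition $w\sqcap u\rrel u'\sqcap u$ only become easier for fewer pairs $u\prec u'$. So Lemma~\ref{lemmIsOrd} applies to $\peq_\fA'$, and the argument of Lemma~\ref{lemmCompose} then shows that ${\peq_\fB}\circ{\peq'_\fA}$ is expansive and transitive.

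The remaining task is pivotality of $\peq'$ with pivot $x\sqcap z$. First I would apply Lemma~\ref{lemmJoins} to $\peq'_\fA$ (pivot $w'$) and $\peq_\fB$ (pivot $w$), once for each pair $a\peq'_\fA b\peq_\fB c$ with $a\neq c$. This gives the uniform bound $w\sqcap w'\rrel a\sqcap c$, so $w\sqcap w'$ is a pivot of $\peq'$. I would then show $w\sqcap w' \equiv x\sqcap z$ by splitting according to which of $x\prec_\fA y$, $y\prec_\fB z$ are strict.

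If $x=y$, then $\peq'$ coincides on the relevant pairs with $\peq_\fB$, and $x\sqcap z\rrel w$. Here I would pass instead to the order ${\peq_\fB}\circ\mathbb I$ and use the fact that any pivot lying below the given pivot is again a pivot. A literal equality $x\sqcap z = w$ is not needed, only that $x\sqcap z$ lies $\rrel$-below every $u\sqcap u'$; this follows from $x\sqcap z\rrel w$ and the pivotality of $\peq_\fB$. The case $y=z$ is symmetric, using $x\sqcap z=w'$. In the strict case, Lemma~\ref{lemmJoins} applied to the triple $(x,y,z)$ yields either $w\sqcap w'=w'=x\sqcap z$ or $w\sqcap w'=w=x\sqcap z$, so in both cases $x\sqcap z=w\sqcap w'$, which is a pivot.

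The step I expect to be the main obstacle is reconciling pivots across the components when they sit at different heights of the tree. For instance, $w'$ may lie strictly above $w$, so that pairs from $\peq'_\fA$ alone have meets only above $w'$, while $x\sqcap z=w$. The weakening that pivots may be moved downward, combined with tree-likeness (the property used in Lemma~\ref{lemmJoins} that $x\sqcap y\rel z\rrel y$ implies $x\sqcap y=x\sqcap z$), should handle this. I would still check carefully that the meets of mixed pairs $a\peq'_\fA b\peq_\fB c$ never drop below $x\sqcap z$. This is exactly the first clause of Lemma~\ref{lemmJoins}, so that clause has to be applied with the specific pivots $w'$ and $w$ rather than arbitrary ones.
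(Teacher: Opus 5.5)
Your proposal is correct and follows essentially the same route as the paper. You reduce to ${\peq_\fB}\circ{\peq_\fA}$ via Lemma~\ref{lemmCompose}, replace the $\fA$-step by a pivotal witness with pivot $x\sqcap y$, compose it with $\peq_\fB$, and read off $x\sqcap z = w\sqcap w'$ as a pivot from Lemma~\ref{lemmJoins}; the one-sided cases are handled by Lemma~\ref{lemmIsOrd} or by $\peq_\fB$ alone, and your explicit check that $w$ stays admissible for the smaller order $\peq'_\fA$ supplies a step the paper uses tacitly.
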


\begin{proof}
By Lemma~\ref{lemmCompose}, ${\peq}:= {\peq_\fA} \sqcup_w {\peq_\fB} = {\peq_\fB} \circ {\peq_\fA}$
Now, assume that $x \prec y$; we must find a relation with pivot $x\sqcap y$ witnessing $x \prec y$.
By Lemma~\ref{lemmIsOrd}, if $x \prec_\fA y$, we can continue using the pivot they had on $\fA$.
Similarly, if $x \prec_\fB y$, we may use the pivot $w$.
So, it remains to consider the case where there is some $z$ such that $x \prec_\fA z \prec_\fB y$.
Let $v= x\sqcap z$ and $\peq_v$ be a pivotal order witnessing for $x \prec_\fA z $ with pivot ${ v}$.
Let $ {\peq'} := {\peq_\fB} \circ{\peq _v}  $.
This is a $\Sigma$-intuitionistic order by Lemma~\ref{lemmCompose}.
By Lemma~\ref{lemmJoins},  $ w \sqcap v  =   x\sqcap y$, and clearly, $w\sqcap v$ is a pivot for $\peq'$.
\end{proof}

Now, we must pinpoint exactly which defect to resolve first.
It will be one with a {\em maximal admissible} potential resolution $w$, in the sense that $w$ is admissible and there is no admissible potential resolution $w' \sler_\fA w$.

\begin{proposition}\label{propStillStable}
Let $\fA$ be a stable canonical quasimodel and $w$ be a maximal admissible potential resolution for some defect $\delta$.
Let $(\fB ,  w)$ be a live potential resolution for $\delta$ and $\fA':=\fA\sqcup_w \fB$.
Then:
\begin{enumerate}

\item\label{itStillStableOne} If $\eta = (v ,\varphi )$ is any defect of $\fA'$ with $v\in\dom{\fA} $, then $\eta$ is a defect of $\fA$.

\item\label{itStillStableTwo} $\fA' $ is stable.
 
\end{enumerate}
\end{proposition}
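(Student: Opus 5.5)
The plan is to treat the two items separately. The first item is a statement about old points only, and the second is obtained by combining the order-theoretic lemmas with a recheck of admissibility.

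For item~\ref{itStillStableOne}, take a defect $\eta=(v,\varphi)$ of $\fA'$ with $v\in\dom\fA$, and suppose towards a contradiction that $\eta$ is not a defect of $\fA$. Since labels are unchanged on $\dom\fA$, the label-theoretic part of being a potential defect is the same in $\fA$ and $\fA'$. So the only way $\eta$ could be new is that it was previously resolved by $\peq_\fA$, with some point of resolution $v'\succeq_\fA v$, and that this resolution is lost in $\fA'$.

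The resolution persists, however, because ${\peq_{\fA'}}={\peq_\fA}\sqcup_w{\peq_\fB}={\peq_\fB}\circ{\peq_\fA}$ contains $\peq_\fA$ (Lemma~\ref{lemmCompose}). Moreover, by Lemma~\ref{lemmIsOrd}, $\peq_\fA$ remains expansive on $\fA\sqcup_w\fB$: admissibility of $w$ rules out forward confluence failures at stubs, and backward failures cannot occur because no $\prec$ leads from $\dom\fA$ into $\dom\fB$ except via the pivotal $\peq_\fB$. Hence $v\peq_{\fA'}v'$, the label of $v'$ is unchanged, and $(v',\varphi)$ is still not a defect. So $\eta$ is resolved in $\fA'$, a contradiction. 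The subtle point is that ``defect'' means unresolved intuitionistic defect; I would make explicit that the grafting adds no new $\rel$-successors that change whether $\overline{\nec\psi}$ at $v$ is witnessed, since $\ell(v)$ is fixed.

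For item~\ref{itStillStableTwo}, pointwise pivotality of $\peq_{\fA'}$ is exactly Lemma~\ref{lemmCurlyvee}, because $(\fB,w)$ is live and $w$ is admissible. It remains to show that every defect $\eta$ of $\fA'$ has an admissible potential resolution. I would split into two cases according to where $\eta$ lives.

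\emph{Case $\eta=(v,\varphi)$ with $v\in\dom\fA$.} By item~\ref{itStillStableOne}, $\eta$ is a defect of $\fA$, so stability of $\fA$ gives an admissible potential resolution $w'$ for it in $\fA$. Lemma~\ref{lemmGraftExt} then transports the resolving sprout to $\fA'$, giving a pivotal order for $\fA'\sqcup_{w'}\fB'$ which agrees with the old one. We must check that $w'$ is still admissible in $\fA'$, and this is where maximality of $w$ enters. New strict relations $u\prec u'$ in $\fA'$ all have the form $u\in\dom\fA$, $u'\in[w]\cup\dom\fB$, with $u\sqcap u'$ above the pivot $w$ (Lemma~\ref{lemmJoins}). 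The required inequality $w'\sqcap u\rrel u'\sqcap u$ then follows from the tree structure: if $w'\not\srel w$, maximality of $w$ forces $w'\rrel w$ or incomparability, and the inequality follows in either case. Liveness of points below $w'$ is preserved because the only new stub candidate is $w$, and $(\fB,w)$ is live.

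\emph{Case $v\in\dom\fB$.} Apply Proposition~\ref{propExistsStrategic} in $\fA'$ to get a strategic potential resolution $w^*\rrel v$, and choose it inside $\fB$ or at a bud at or above $w$, using Corollary~\ref{corStrategicSuccessor} when the breaking points fall in the configuration it covers. Admissibility is checked as before, using that every $\prec$ pair relevant to points of $\fB$ has its meet above $w$.

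I expect the admissibility bookkeeping in item~\ref{itStillStableTwo} to be the main obstacle, especially showing that a resolution point strictly below $w$ cannot lose admissibility. The first thing I would try is a direct meet computation via Lemma~\ref{lemmJoins}, falling back on the maximality of $w$ if that fails.
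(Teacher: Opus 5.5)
Your treatment of item~1, your use of Lemma~\ref{lemmCurlyvee} for pointwise pivotality, and your split of item~2 into defects at old points and at points of $\fB$ all follow the paper. The trouble is in the admissibility check, which you rightly flag as the crux.

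First, your claim that every new pair $u\prec u'$ of $\fA'$ has $u\sqcap u'$ above $w$ is false. For a composite $u\prec_\fA u''\prec_\fB u'$, Lemma~\ref{lemmJoins} only gives $w\sqcap(u\sqcap u'')\rrel u\sqcap u'$. In its first alternative, $u\sqcap u'=u\sqcap u''$ is the old pivot, and this can lie strictly below $w$. For old defects this is repairable, but you must split along the two alternatives of Lemma~\ref{lemmJoins}, as the paper does:
\begin{itemize}
\item When $u\sqcap u'=u\sqcap u''$, invoke the admissibility of $c$ in $\fA$ for the pair $u\prec_\fA u''$.
\item When $u\sqcap u'=w$ (this includes the direct case $u=u''\prec_\fB u'$), use that maximality of $w$ gives $w\not\srel c$. (You wrote $w'\not\srel w$, presumably meaning this.) Then comparability of $c\sqcap u$ and $w$ below $u$ forces $c\sqcap u\rrel w$.
\end{itemize}

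Second, for defects at $v\in\dom\fB$ the key idea is missing. You cannot in general choose the resolution inside $\dom\fB\cup[w]$. Proposition~\ref{propExistsStrategic} and Corollary~\ref{corStrategicSuccessor} produce resolutions at or below breaking points, possibly strictly below $w$; compare the proof of Proposition~\ref{propnuclear}, where defects in the grafted part are resolved strictly below $w$.

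More importantly, such a new resolution $c$ inherits no admissibility from $\fA$. So you must control every pair $u\prec u'$ of $\fA'$, including old pairs $u\prec_\fA u'$ whose meet lies far below $w$. A statement about meets of pairs ``relevant to points of $\fB$'' says nothing about these.

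The paper's argument runs as follows. For $c\rrel v$ and any $u\in\dom\fA$, one has $c\sqcap u\rrel v\sqcap u\rrel w$, hence $c\sqcap u\rrel w\sqcap u$. Admissibility of $w$ in $\fA$ then gives $c\sqcap u\rrel w\sqcap u\rrel u\sqcap u''$. So $c\sqcap u$ lies below both $w$ and the pivot $u\sqcap u''$, and Lemma~\ref{lemmJoins} yields $c\sqcap u\rrel u\sqcap u'$. In other words, admissibility of any resolution of a new defect is inherited from the admissibility of $w$ itself, and your sketch never uses that ingredient.

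(Your remark that points below the resolution stay live because $(\fB,w)$ is live is sound, and it covers a point the paper leaves implicit.)
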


\begin{proof}
Let ${\peq} = {\peq_\fA} \sqcup _w {\peq_\fB}$ which, by Lemma~\ref{lemmCurlyvee}, contains $  {\peq_\fA}$ and is pointwise pivotal.
The first item is then immediate, since any resolution for a defect in $\fA$ is also a resolution in $\fA'$.

Now, let $\delta = (v,\varphi)$ be any defect of $ \fA'$.
We must check that $\delta$ has an admissible resolution,  $c$.

If $v\in \dom{\fA}$ then, by the first item, $\delta$ was already a defect of $\fA$ and we let $(\fC,c)$ be an admissible potential resolution for $\delta$ on $\fA$.
By Lemma~\ref{lemmGraftExt}, there is a potential resolution $(\fC',c)$ for $\delta$ on $\fA'$, so $c$ is a potential resolution in this case.
Note that by choice of $w$, $w\rel c$ does not hold.
In the case where $v\in\dom{\fB}$, by Proposition~\ref{propExistsStrategic}, $\delta$ has at least one potential resolution, say $c$.
In this case, $c \rrel v$.
So, we have identified a potential resolution $c$ such that either $w \not \rel c$ and $c$ is admissible on $\fA$, or else $c\rrel v$.

Now, suppose that $u \prec u' $, so that there is $u''$ with $u\peq_\fA u'' \peq_\fB u'$, with one of the two being strict.
Let $\peq'_\fA$ be a pivotal relation with pivot $w':= u\sqcap u''$ witnessing $u\peq_\fA u''$.

We must have that $u\in \dom{\fA}$, since no elements of $\fB$ are moved by $\peq_\fA$ nor $\peq_\fB$.
Similarly, $u\prec _\fA u''$ and $ u'' \prec_\fB u'$ both imply that $u''\in\dom\fA$, so $u'' \in \dom\fA$.

First assume that $w \not \rel c$ and $c  $ is admissible for $\fA$.
If $u ' = u'' $, then $u\prec_\fA u''$ and since $c$ is admissible on $\fA$,  $c\sqcap u \rrel u' \sqcap u  $.
Otherwise, $u''\prec_\fB u'$, so $u'\in\dom\fB$ and $w\rel u''$.
If $w\rrel u$, then $w\sqcap u =  w$ and thus $ w \sqcap u \rel c\sqcap u $ would yield $w \rel c $, a contradiction.
But $w \sqcap u , c\sqcap u \rrel u $, so the two are linearly ordered, and we must have $c\sqcap u  \rrel w\sqcap u \rrel u'\sqcap u$, the latter inequality following from $w\rrel u'$.
Note that this covers the case $ u=u'' $, since $w\rel u'' $.
We are left with the case where $u\prec_\fA u'' \prec_\fB u'$.
By Lemma~\ref{lemmJoins}, either
$w\sqcap w' = w'  = u\sqcap u''$ and $c\sqcap u \rrel u\sqcap u'' $
since $c$ is admissible for $\fA$, or else 
$w\sqcap w' = w = u' \sqcap u $.
We cannot have $ u' \sqcap u \rel c \sqcap u$, since this would entail $w\rel c$.
But again, $ u' \sqcap u $ and $ c \sqcap u$ are comparable, so $c\sqcap u\rrel u'\sqcap u = w$.
In either case, $c\sqcap u \rrel w\sqcap w'\rrel u'\sqcap u$, where the latter inequality is by Lemma~\ref{lemmJoins} once again.

For the case $v\in\dom\fB$, since $u\in\dom\fA$, $c\sqcap u \rrel v\sqcap u\rrel w$ and thus $c\sqcap u \rrel  w\sqcap u$.
Since $w$ was admissible for $\fA$, $w \sqcap u \rrel u''\sqcap u =  w'$.
Thus, $c \sqcap u  \rrel w$ and $c \sqcap u  \rrel w'$, so we may once again use Lemma~\ref{lemmJoins} to see that $c \sqcap u  \rrel w\sqcap w' \rrel a\sqcap c$.
\end{proof}

\section{The Blooming Lemma}

Progressive and budding defects can be resolved successively, since they introduce new regions that are above previous ones with respect to $\ll$ and, as we will see in Theorem~\ref{theoNoetherian}, we can take advantage of the fact that this order is Noetherian.
However, internal resolutions do not have this feature.
Our strategy will thus be to resolve all of them simultaneously.
Below, $\overline\Sigma = \{\overline\psi:\psi\in \Sigma\}$.

\begin{definition}
Let $\Sigma\Subset\lanfull$, $\fA$ be a canonical $\Sigma$-tree, $w$ a bud of $\fA$, and $n\in\mathbb N$.
An {\em $n$-petal on $w$} is a canonical $\Sigma$-tree $\fB$ such that:
\begin{enumerate}[label=(\alph*)]

\item $\rho(\fB) = \rho(w)$.

\item $ \fA\sqcup _w \fB $ can be grafted.

\item The width of $\fB$ is bounded by $n$.

\item The height of $\fB$ is bounded by $n + 1$.

\item If the height of $\fB$ is $n+1$, then $\fB$ has a unique irreflexive root $r$.
In this case, $r$ has a representative set of successors $V$ such that there is an injection $\cdot_\psi\colon V \to \Sigma\cup\overline\Sigma\cup \{\top\}$ with $\psi \in \ell (u_\psi)$.
In this case, we say that $\fB$ is a {\em tall petal.}

\end{enumerate}
\end{definition}

Petals are useful for resolving internal defects.
For reasoning about such defects, it will be convenient to introduce a new relation given by $ x \rrel^\bullet y $ if $x\rrel y$ but $\rho(x) \ggeq \rho(y)$, where $x,y$ are points of some canonical sprout.

\begin{lemma}\label{lemmPetal}
Let $\Sigma\Subset\lanfull$, $\fA$ be a canonical $\Sigma$-tree, $w$ a bud of $\fA$, and $\fB$ an $n$-petal with $n > 2\#\Sigma$ such that $\fA':= \fA\sqcup_w \fB$ can be grafted.

Suppose that there is an intuitionistic defect $\delta=(w,\varphi)$ of $\fA'$ such that $w$ is an internal potential resolution for $\delta $.
Then, there is an $n$-petal $\fB'$ such that $(\fB',w)$ resolves $\delta$.
\end{lemma}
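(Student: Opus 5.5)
The approach is a Fine-style finality argument carried out inside the region $\rho(w)$, mirroring Lemma~\ref{lemmModalDefects} but with the intuitionistic order added.

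\emph{Step 1: obtain an unbounded resolution.} Since $w$ is an internal potential resolution for $\delta$, there are a potential resolution $(\fC,w)$ and a pivotal order $\peq$ for $\fA\sqcup_w\fC$ that resolves $\delta$, with $\rho(w)=\rho(v)$ equal to the region of the target. Let $t$ be the point of resolution. Then $h(t)$ lies in $\rho(w)$, $w\prec t$, and $\overline\varphi\in\ell(t)$, with $(t,\varphi)$ not a defect. The only problem is that $\fC$ may be too wide or too tall to be an $n$-petal.

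\emph{Step 2: prune to a tall petal.} Build $\fB'$ with a fresh irreflexive root $r$ and set $h(r):=h(t)$.
\begin{itemize}
\item For each $\psi\in\Sigma\cup\overline\Sigma\cup\{\top\}$ realised strictly $\rel$-above $t$ in $\fC$ (or needed as a $\ps\psi$ or $\ps\overline\psi$ witness, or $\top$ for an arbitrary witness of backward confluence), use Proposition~\ref{propFinal} to choose a $\rel_{\rm c}$-Final $\Psi_\psi\ler_{\rm c}h(t)$ for $\psi$.
\item Take a set of $\Sigma$-representatives of the cluster $[\Psi_\psi]$ and graft it onto $r$.
\item Close the result off under modal defects with Lemma~\ref{lemmModalDefects}.
\end{itemize}
This yields an injection $V\to\Sigma\cup\overline\Sigma\cup\{\top\}$ on a representative set of successors $V$ of $r$. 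By the usual Fine bound, each new chain has length at most $2\#\Sigma$, and each node has at most $\#\Sigma$ modal defects. Since $n>2\#\Sigma$, the height is at most $n+1$ and the width at most $n$. Finality also keeps everything inside $\rho(w)$: every point of $\fB'$ is $\llsim$-above $h(w)$ and below the region's $\llsim$-Final witness. This gives condition~(a). Grafting onto $w$ is possible because $h(w)\in\hat\rho(w)$ and Lemma~\ref{lemmregionProps}\eqref{itregionOne} gives $h(w)\rel h(r)$, which gives condition~(b).

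\emph{Step 3: re-establish the order.} Define $\peq'$ on $\fA\sqcup_w\fB'$ as follows.
\begin{itemize}
\item Keep $\peq$ on $\fA$.
\item Put $w\prec' r$.
\item For each $x\in\dom{\fA}$ with $w\rel x$, relate $x$ to a point of $\fB'$ via $\peq^\Sigma_{\rm c}$, using Lemma~\ref{lemmClusConf} together with the buds being $\rrel$-minimal (Lemma~\ref{lemmregionProps}\eqref{itregionTwo}) to witness backward confluence.
\end{itemize}
Where forward confluence requires new successors of $r$, use Lemma~\ref{lemmGraftExt}. This is exactly where the $\top$-labelled Final successors are used, so that the width and height bounds are preserved. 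Pivotality with pivot $w$ is immediate from the construction. The point $r$ resolves $\delta$: its label equals that of $t$, so $\overline\varphi\in\ell(r)$. If $\varphi=\overline{\nec\psi}$, then $\ps\overline\psi\in\ell(r)$ is witnessed by one of the selected Final successors, so $(r,\varphi)$ is not a defect.

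\emph{Main obstacle.} The hard part will be Step 3: checking that the relation obtained by confluence propagation does not force additional successors beyond the budget $n$. I would handle it by choosing every witness to be $\rel_{\rm c}$-Final for its own $\Sigma$-label. Then any further required witness is $\equiv$ to an already selected cluster and can be reused rather than added.
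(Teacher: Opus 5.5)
There is a genuine gap, and it sits exactly at the step you flagged: forward confluence of the new order at the new point of resolution, within the petal budget. Two things go wrong.

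First, you place the defect at the bud and set $w\prec' r$, with $w\rel r$ coming from the graft. The statement literally writes $(w,\varphi)$, but the situation the lemma is used for, and the one the paper's proof treats (see its use in Proposition~\ref{propBloom}), is different. There the defect is $\delta=(v,\varphi)$ at a point $v$ of the petal $\fB$, and it is resolved by $v\prec v'$ with $v'$ the root of a \emph{different} petal $\fB'$. With $w\prec' r$ and $w\rel r$, the bud becomes a stub, so the resolution is no longer live. That breaks the admissibility on which the stability of the flower rests. Moreover, forward confluence at $w\prec' r$ forces the cone above $r$ to $\peq$-dominate everything $\rel$-above $w$ in $\fA'$. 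This includes every branch already grafted onto $w$, and that width is not bounded by $n$.

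Second, even with the defect correctly placed at $v$, your witnesses do not give forward confluence. From $v\prec' r$ and $v\rel x$ you need $y$ with $r\rel y$ and $\ell(x){\upharpoonright}\Sigma\peqT\ell(y){\upharpoonright}\Sigma$, and then recursively the same for the successors of $x$. So the tree above $r$ must simulate the whole cone of $v$.
\begin{itemize}
\item A world that is $\rel_{\rm c}$-Final for a single $\psi$ (or for $\top$) bears no $\peq_{\rm c}$-relation to $h(x)$.
\item Fine's reuse argument (``any further witness is $\equiv$ to a selected cluster'') concerns $\rel$-chains and is silent about $\peq$-domination.
\item Lemma~\ref{lemmGraftExt} adds one cluster per simulated node, with no control on width or height.
\end{itemize}

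The missing idea is to not build the petal from scratch. Let $\fB_v$ be the cone of $v$ in $\fB$; it is already bounded because $\fB$ is a petal.
\begin{itemize}
\item Choose $\Phi\seq_{\rm c}h(v)$ resolving $\varphi$ with $\Phi\cong h(v)$.
\item Place a verbatim copy $\tilde x$ of each $x\in\dom{\fB_v}$ above a new irreflexive root $v'$ with $h(v')=\Phi$. Since $h(v)\peq\Phi$ lie in the same region, Lemma~\ref{lemmregionProps}\eqref{itregionTwo} makes the edges $\Phi\rel_{\rm c}h(x)$ legitimate.
\item Forward confluence is then free: $x\peq\tilde x$, and the labels are identical.
\item Only the modal defects of $v'$ need fresh Fine-style witnesses, via Lemma~\ref{lemmModalDefects}.
\item Backward confluence for points of $\fB$ between $w$ and $v$ is obtained by relating them to the bud cluster $[w]$.
\end{itemize}
The size bounds then come from a case split you do not have. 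If $\fB_v$ has height at most $n$, the result is an $n$-petal, tall with $v_\top:=\tilde v$ if needed. If $\fB_v$ has height $n+1$, then $v$ is the unique root of the tall petal $\fB$. In that case you simply replace $v$ by $v'$, add the missing modal witnesses, and set $v\prec v'$.
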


\begin{proof}
Choose $\Phi \seq_{\rm c} h (v)$ resolving $(h(v),\varphi)$ and such that $\Phi \cong h(v)$.
Let $v'$ be a fresh node with $h(v') = \Phi$ and recall that $\fB_v = \fB{{\upharpoonright}} {\rel} (v) $.
For each modal defect $\ps\psi$ of $v'$ with $\psi\neq\top$, find $\Phi_\psi \ler \Phi$ with $\psi\in \Phi_\psi$.
Add a fresh node $v_\psi$ with $h(v_\psi)=\Phi_\psi$ and use Lemma~\ref{lemmModalDefects} to extend the singleton $\{v_\psi\}$ to a canonical $\Sigma$-tree $\fB_\psi$.

Now, consider two cases. First, assume that $\fB_v$ has height at most $n$.
Then, graft $\fB_v$ and each $\fB_\psi$ to $ v' $ and call the result $\fB'$.
For $x\in\dom{\fB_v}$, we let $\tilde x$ be the copy of $x$ that was grafted to $v' $.
Using Lemma~\ref{lemmregionProps}, we see that $h(w) \rel_{\rm c} \Phi \rel_{\rm c} h(r)$, so we have a pivotal order $\peq$ on $\fA'\sqcup_w \fB'$ with $x\peq y$ if either
\begin{enumerate}

\item $x=y$ or

\item  $x\in\dom{\fB}$ and either
\begin{enumerate}

\item $x=v$ and $y=v'$,

\item $x\in\dom{\fB_v} $ and $y=\tilde x$, or

\item $ w\rrel^\bullet x$, $ y \equiv w$, and $h  (x) \peq^\Sigma_{\rm c} h  (y) $.\footnote{Recall that $\Phi \peq^\Sigma_{\rm c} \Psi$ if there is $\Psi' \equiv \Psi$ such that $\Phi\peq_{\rm c} \Psi' \subT \Psi$.}

\end{enumerate}

\end{enumerate}
Then, it is not too difficult to check that $\peq$ is expansive and resolves $\delta$.
We moreover note that $\fB'$ either has height at most $n$ or else it satisfies the properties required of a tall $n$-petal, setting $v_\top:= \tilde v$.

Otherwise, $\fB_v$ has height $n+1$, which means that it is a tall $n$-petal and hence has unique root $v$.
Let $\fB'$ be the result of replacing $v$ by $v'$ and then grafting $\fB_\psi$ onto $v'$ whenever $(v,\ps \psi) $ is a modal defect (omitting those that are already resolved in $\fB_v$).
In this case, we can let $\peq$ be the identity together with $v\prec v'$.
\end{proof}

Here, the reader may be concerned that, while one petal resolves the defects of another, there is no real `progress', as all petals share a single upper bound and they are all in the same region.
This is true, but what we {\em can} do is to add all petals at once and produce a  `flower' resolving each other's internal defects.

\begin{proposition}\label{propBloom}
Let $\Sigma\Subset\lanfull$, $\fA$ be a stable canonical $\Sigma$-tree, and $w$ a bud of $\fA$.
Then, there is a stable extension $\fA' $ of $\fA$ which is local to $w$ and such that $w$ is not an internal potential resolution for any defect of $\fA'$.
\end{proposition}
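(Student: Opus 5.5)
The plan is to build a finite ``flower'': graft onto $w$ every $n$-petal, up to $\Sigma$-equivalence, for a fixed $n>2\#\Sigma$. Then Lemma~\ref{lemmPetal} shows that any internal defect whose resolution point is $w$ is already resolved inside the flower.

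\emph{Step 1: finitely many petals.} Fix $n>2\#\Sigma$. Say that two $n$-petals $\fB,\fB'$ on $w$ are equivalent if $\fB{\upharpoonright}\Sigma$ and $\fB'{\upharpoonright}\Sigma$ are isomorphic as $\Sigma$-sprouts. Width is bounded by $n$ and height by $n+1$, so each petal has at most about $n^{n+2}$ nodes. There are only finitely many $\Sigma$-labels, so there are finitely many equivalence classes. Choose a set $\mathfrak P$ of representatives, one per class. This set is nonempty, since $[w]$ itself, or the result of Lemma~\ref{lemmModalDefects} applied to a representative of $\hat\rho(w)$, gives a petal.

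\emph{Step 2: the flower.} Let $\fA'=\fA\sqcup_w\mathfrak P$. This is local to $w$ and is a canonical $\Sigma$-tree, because each petal is. Equip it with $\peq_{\fA'}$ as follows. Extend $\peq_\fA$ using the proof of Lemma~\ref{lemmIsOrd}, taking the identity on the new nodes; $w$ is admissible because it is a bud, and Lemma~\ref{lemmregionProps} makes the relevant points live. Then, for each petal $\fB\in\mathfrak P$ and each internal defect $\delta$ of $\fA\sqcup_w\fB$ resolved through $w$, Lemma~\ref{lemmPetal} gives an $n$-petal $\fB'$ and a pivotal order for $\fA\sqcup_w\fB'$ resolving $\delta$. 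Let $\fB''\in\mathfrak P$ be the representative of $\fB'$. Transport the order along the isomorphism $\fB'{\upharpoonright}\Sigma\cong\fB''{\upharpoonright}\Sigma$; this keeps it $\Sigma$-intuitionistic, since only $\Sigma$-labels matter. Add it to the order. All these pivotal relations have pivot $w$ and only relate points of $\fA\cup\fB$ to points of $[w]\cup\fB''$. Iterating Lemmas~\ref{lemmCompose} and~\ref{lemmCurlyvee} then shows that the joined order $\peq_{\fA'}$ is pointwise pivotal.

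\emph{Step 3: no internal defects remain at $w$.} Suppose $\delta=(v,\varphi)$ is a defect of $\fA'$ and $w$ is an internal potential resolution for it. Then $\rho(v)=\rho(w)$. If $v$ lies in a petal, it was handled in Step 2, so it is not a defect. If $v\in\dom\fA$, then $v\rrel w$ with $\rho(v)=\rho(w)$, and $v$ lies in the cluster part of $w$, which is covered by the petal $[w]$. Here I expect to need a variant of Lemma~\ref{lemmPetal} applied to the trivial petal.

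\emph{Step 4: stability.} Check that $\fA'$ is stable by the same argument as Proposition~\ref{propStillStable}\eqref{itStillStableTwo}, using that the new orders all have pivot $w$. For a new defect inside a petal, Proposition~\ref{propExistsStrategic} gives a potential resolution; if that resolution is internal at $w$, Step 3 shows it was already resolved.

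The main obstacle is Step 2: the orders coming from different petals must be joined without breaking expansiveness. In particular, the successive applications of Lemma~\ref{lemmPetal} must not reintroduce defects in petals already treated. Closing $\mathfrak P$ under ``resolving petal'' up to equivalence is what makes the process finite. The delicate point is that the transport along $\Sigma$-isomorphisms has to respect honesty with respect to $h$; I would handle this by using $\peq^\Sigma_{\rm c}$ rather than $\peq_{\rm c}$, as in Lemma~\ref{lemmPetal}.
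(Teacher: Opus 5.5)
Your plan (graft every $n$-petal onto $w$ and let Lemma~\ref{lemmPetal} close the flower under internal resolutions) is the paper's plan, but two steps fail as written.

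First, stability (your Step 4) does not follow ``by the same argument as Proposition~\ref{propStillStable}''. That argument uses the fact that no point of the grafted tree is moved by the order, so every $u\prec u'$ has $u\in\dom\fA$. In the flower this is false: the orders you import from Lemma~\ref{lemmPetal} relate points of a petal to $[w]$ and to other petals, with meet $\equiv w$. Because these relations have pivot $w$, a potential resolution $c$ with $w\srel c$ inside a petal $\fB$ is not admissible. Such a $c$ is exactly what Proposition~\ref{propExistsStrategic} may return for a new defect in $\fB$. To see the failure, suppose some $u\in\dom\fB$ with $u\rrel c$ is moved, $u\prec u'$, to $[w]$ or to another petal. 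Then $c\sqcap u=u$ lies strictly above $w$ while $u'\sqcap u\equiv w$, so $c\sqcap u\rrel u'\sqcap u$ fails. The missing idea, which is how the paper argues, is that every defect whose resolution lies strictly above $w$ also has a strategic potential resolution at or below $w$. Each petal root $r$ satisfies $\rho(r)=\rho(w)$ and $w\rel^1 r$ with $w$ a bud, so Corollary~\ref{corStrategicSuccessor} applies; the resulting resolution is progressive, not internal. A resolution at $w$ is compatible with all pivot-$w$ relations. Also, admissibility of $w$ does not follow from $w$ being a bud. It has to come from $w$ having been chosen maximal admissible, as in Proposition~\ref{propStillStable}.

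Second, your Step 3 leaves the case $v\in\dom\fA$ open, and the claim that such $v$ lies in $[w]$ is unjustified. An internal resolution through $w$ only forces $w\rrel v$ (not $v\rrel w$) and $\rho(v)=\rho(w)$, so $v$ may sit strictly above $w$ in $\fA_w$. No ``trivial petal'' variant handles this. What is needed is that the construction of Lemma~\ref{lemmPetal} still yields an $n$-petal: a fresh resolution point $v'$, with a copy of $\fA_v$ and the modal witnesses grafted onto it. That requires $\fA_v$ to have height at most $n$. This is why the paper takes $n=\max\{{\rm dpt}_\fA(w),2\#\Sigma+1\}$ rather than just $n>2\#\Sigma$; with your choice of $n$, such a defect need not be resolved by any petal in the flower. (A minor point: width and height do not bound cluster sizes. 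To get finitely many petals up to $\Sigma$-equivalence, you should also restrict to clusters with pairwise distinct $\Sigma$-labels.)
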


\begin{proof}
Let $n = \max \{{\rm dpt}_\fA(w), 2\#\Sigma+1 \}$, $\mathfrak G$ be the set of all $n$-petals over $w$ (up to isomorphism), and set $\fA' := \fA\sqcup_w \mathfrak G$.

Suppose that $\delta = (v,\varphi)$ is a defect of $\fA'$ such that $w$ is an internal potential resolution for $\delta$.
By Lemma~\ref{lemmPetal}, $\delta$ is resolved in another $n$-petal, but this is just an element of $\mathfrak G$, so $\delta$ is already resolved on $\fA'$.

It remains to check that $\fA'$ is stable.
In view of Proposition~\ref{propStillStable}, we only need to consider defects $\delta=(v,\varphi)$ with resolution $w' \sler w $ and show that  $w$ is also a potential resolution for such $\delta$.

If $\vec{\fB}$ is a ladder to $\delta$ as given by Lemma~\ref{lemmResInductive}, we note that, by definition of a petal, $\rho(\fB) = \rho(w) $.
Thus, the root $r$ of $\fB$ satsifies $\rho(r) = \rho(w) $ and $w\rel^1 r$, so that by Corollary~\ref{corStrategicSuccessor}, $w$ is a progressive potential resolution for $\delta$, as desired.
\end{proof}

We will call an extension as given by Proposition~\ref{propBloom} a {\em flower.}
With this, we now can strategically remove every defect that arises in our model search.

\section{The Bouquet Lemma}

The last type of defect we need to resolve are nuclear defects.
These will be the last defects resolved at $w$ and, as with internal defects, will all be resolved simultaneously.
We begin with a characterisation of nuclear resolutions.

\begin{lemma}
Let $\fA$ be a canonical tree and $ w $ be a bud of $\fA$.
Suppose that $\delta = (w,\varphi)$ is a defect of $\fA$ and $(\fB,w)$ a nuclear potential resolution for $\delta$.
Then, $h(w) \rel_{\rm c} h(\fB)$ and $h(w) \peq_{\rm c} h(\fB)$.
\end{lemma}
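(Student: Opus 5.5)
The plan is to unpack the definitions of ``potential resolution'' and ``nuclear'' and read off the two relations from the pivotal order. Since $(\fB,w)$ is a potential resolution for $\delta=(w,\varphi)$, grafting gives $h(w)\rel_{\rm c} h(r)$ for every root $r$ of $\fB$. Since $\fB$ is rooted, every element of $\fB$ lies $\rel_\fB$-above a root. Composing with the homomorphism $h_\fB$ and using transitivity of $\rel_{\rm c}$ then gives $h(w)\rel_{\rm c} h(x)$ for every $x\in\dom\fB$, i.e.\ $h(w)\rel_{\rm c} h(\fB)$ in the overloaded set sense. This half is routine.

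For the $\peq$ half, I use the pivotal order $\peq$ for $\fA\sqcup_w\fB$ that resolves $\delta$. Its resolution point $y\succ w$ lies in $[w]\cup\dom\fB$, by the definition of an order pivotal for a grafting. Nuclearity says the source $u$ satisfies $u\equiv w$ and $\rho(u)\ll\rho(t)$ for a target $t$. I first argue that $y\in\dom\fB$. If $y\in[w]$, the resolution would be internal to the bud cluster, which forces $\rho(y)=\rho(w)$ and contradicts $\rho(w)\ll\rho(t)$ once we track that resolving $\delta$ at $w$ with pivot $w$ must pass through the target.

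The key step is then to transfer $w\prec y$ to the level of theories, giving $h(w)\peq_{\rm c} h(y)$. Because $\peq$ is $\Sigma$-intuitionistic, we only get $\ell(h(w)){\upharpoonright}\Sigma\peq_\Sigma\ell(h(y)){\upharpoonright}\Sigma$. To upgrade this to genuine inclusion of prime theories, I would use that $w$ is a bud. Then $h([w])$ consists of $\Sigma$-representatives of $\hat\rho(w)$, which are $\peq_{\rm c}$-maximal in their region and, by Lemma~\ref{lemmregionProps}, $\rrel$-minimal. Combining $h(w)\rel_{\rm c} h(y)$ with Lemma~\ref{lemmBoxLL} gives $h(w)^\boxdot\subseteq h(y)^\boxdot$. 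Using Lemma~\ref{lemmClusConf}\eqref{itClusConfFor} with forward confluence of $\peq$ along the $\rel$-edges of $\fB$, I then propagate $\peq_{\rm c}$ from the resolution point to every element of $h(\fB)$, obtaining $h(w)\peq_{\rm c} h(\fB)$ in the set sense: each $x\in\dom\fB$ admits some $\peq_{\rm c}$-successor of $h(w)$ matching it.

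The main obstacle is this upgrade from the $\Sigma$-restricted order to $\peq_{\rm c}$. What I will try first is to use the honesty of the pivotal order, which gives $\peq^\Sigma_{\rm c}$, i.e.\ $h(w)\peq_{\rm c}\Psi'$ for some $\Psi'\equiv h(y)$ with $\Psi'\subT h(y)$. Since $\fB$ is a canonical tree closed under choosing $\Sigma$-representatives of clusters, I would then replace $h(y)$ by $\Psi'$ inside $\fB$ without changing $\fB{\upharpoonright}\Sigma$. If honesty is not available for the given order, I would instead re-choose $\fB$ via Proposition~\ref{propFinal} so that its root is $\peq_{\rm c}$-above $h(w)$ directly.
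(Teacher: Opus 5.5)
\paragraph{Where the proposal falls short.} Your $\rel_{\rm c}$ half is the paper's argument: it is the grafting condition $h(w)\rel_{\rm c}h(r)$, pushed along $\fB$ by transitivity. For the $\peq_{\rm c}$ half the paper also argues ``by definition''. The order of the potential resolution places a point of $\fB$ above $w$. In the paper's constructions this is the target $t$, with $h(t)$ a canonical resolution point chosen $\seq_{\rm c} h(w)$, as in Lemma~\ref{lemmInitialRes}. In the overloaded notation, $h(w)\peq_{\rm c}h(\fB)$ asks for only one such witness.

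Your route to this half has genuine gaps:
\begin{enumerate}
\item[(i)] The decisive step is not carried out for the given resolution. Replacing $h(y)$ by $\Psi'$ inside $\fB$, or re-choosing $\fB$ via Proposition~\ref{propFinal}, proves the property for a \emph{different} resolution, while the lemma is about the given $(\fB,w)$. Neither bud-maximality of $h(w)$ nor Lemma~\ref{lemmBoxLL} says anything about $\peq_{\rm c}$-comparisons with points outside $\rho(w)$: $h(w)\rel_{\rm c}h(y)$ only yields $h(w)\llsim_{\rm c}h(y)$, which you already had.
\item[(ii)] Your exclusion of $y\in[w]$ (``contradicts $\rho(w)\ll\rho(t)$ once we track \ldots'') is not an argument. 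Nuclearity constrains the target, not the resolution point.
\item[(iii)] The final propagation step is neither needed nor true. Requiring every $x\in\dom\fB$ to be matched by a $\peq_{\rm c}$-successor of $h(w)$ fails as soon as some formula of $\Sigma$ in $h(w)$ is missing from $h(x)$. Forward confluence of $\peq$ only transports $\rel$-successors of $w$ to $\rel$-successors of $y$; it does not push $\peq$ from $y$ up to its $\rel$-successors.
\end{enumerate}

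\paragraph{How to close the gap.} Use honesty of the resolving order directly, with nothing modified. From $w\prec y$, honesty gives $\Psi'\equiv h(y)$ with $\Psi'\subT h(y)$ and $h(w)\peq_{\rm c}\Psi'$.
\begin{enumerate}
\item[(a)] Suppose $y\in[w]$. Then $h(y)\equiv h(w)$, so $\Psi'$ lies in the $\rel_{\rm c}$-cluster of $h(w)$, hence in the region $\rho(w)$. By $\peq_{\rm c}$-maximality of $h(w)\in\hat\rho(w)$ we get $\Psi'=h(w)$. Then $h(y)$ and $h(w)$ have the same $\Sigma$-label, so $y$ cannot resolve $\delta$. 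This is exactly where the bud hypothesis is used.
\item[(b)] Hence $y\in\dom\fB$. Grafting gives $h(w)\rel_{\rm c}h(y)$, and transitivity gives $h(w)\rel_{\rm c}\Psi'$.
\end{enumerate}
So $\Psi'$, a $\Sigma$-equivalent cluster-mate of $h(y)$, is a canonical resolution point lying both $\rel_{\rm c}$- and $\peq_{\rm c}$-above $h(w)$. That is the content of the lemma up to $\Sigma$-representatives, and it is what Lemma~\ref{lemmnuclear} consumes.

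Without honesty, or the paper's convention that the target is such a canonical point, a merely $\Sigma$-intuitionistic order does not force the $\peq_{\rm c}$ claim at all. So honesty is the ingredient to state explicitly, not something to engineer by changing $\fB$.
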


\begin{proof}
We have that $h(w) \peq_{\rm c} h(\fB)$ by definition of a potential resolution and $ h(w) \rel_{\rm c} h(\fB)$ by the definition of $\fA\sqcup_w\fB$.
\end{proof}

It turns out that points satisfying both $\Phi \rel_{\rm c} \Psi$ and $\Phi \peq_{\rm c} \Psi$ exhibit very peculiar `entanglement' behaviour.

\begin{lemma}\label{lemmnuclear}
Let $\fA$ be a canonical tree and $ w $  be a bud of $\fA$.
Then, there is a cluster $C^* \subseteq W_{\rm c}$ such that for every nuclear defect of the form $\delta=(w',\varphi)$ with $w'\equiv w$ and every point of resolution $\Phi \seq h(w)$ of $\delta$, $\Phi\rel C^*$ and $\Phi\peq C^*$.
\end{lemma}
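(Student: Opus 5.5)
Fix the region $\mathfrak r=\rho(w)$. The plan is to build $C^*$ from a single $\rel$-Final cluster above everything relevant, using compactness (Proposition~\ref{propFinal}) much as in Lemma~\ref{lemmClusConf}. First I record what a nuclear resolution gives. If $\delta=(w',\varphi)$ is nuclear and $\Phi\seq h(w)$ is a point of resolution, then the target $t$ satisfies $h(w)\rel_{\rm c}h(t)$, $h(w)\peq h(t)$ and $\rho(w)\ll\rho(t)$. Since $w$ is a bud, $h([w])$ represents $\hat{\mathfrak r}$. By Lemma~\ref{lemmregionProps}\eqref{itregionTwo}, any $\Phi\seq h(w)$ with $\Phi\llsim h(w)$ inherits the $\rel$-successors of $h(w)$. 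The idea is that every resolution point $\Phi$ sits $\peq$-above the bud and has exactly the $\boxdot$-theory forced by the bud. By Lemma~\ref{lemmBoxLL}, $\Phi^\boxdot$ then determines $\llsim$-position, so all such $\Phi$ share one set of formulas $\Gamma:=\bigcup\{\Phi^\boxdot\}$ that must hold on any common target.

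Second, I show that $\Xi:=\{\psi : \psi\in\Phi^+ \text{ for some relevant resolution point } \Phi\}$ is consistent with being both $\rel$- and $\peq$-above every such $\Phi$. Concretely, I want a prime theory $\Psi$ with $\Phi^\nec\cup\Phi^+\subseteq\Psi$ and $\Psi^\diamonddot\subseteq\Phi^\ps$ for each $\Phi$. The entanglement is the key point. From $\Phi\rel\Psi_\Phi$ and $\Phi\peq\Psi_\Phi$, where $\Psi_\Phi$ is the nuclear witness, we get $\Phi^+\subseteq\Psi_\Phi\subseteq\Phi^\ps$, hence $\Phi\subseteq\Phi^\ps$ (reflexivity-like). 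We also get $\Phi^\nec\subseteq \Phi$, because $\Phi\peq\Psi$ and $\Phi\rel\Psi$ give $\nec\chi\in\Phi\Rightarrow\chi\in\Psi$, with the reverse direction coming via $\ps$. Using \ref{ax:fs} and \ref{ax:cd} as in the proof of Lemma~\ref{lemmBoxLL}, I then argue by contradiction. Suppose $\bigwedge_i\theta_i\vdash\bigvee_j\nec\chi_j$ or $\ps$-disjunctions with the $\theta_i$ drawn from different $\Phi_i$. Each $\theta_i$ persists to the common $\peq$-upper bud $\hat\Phi\in\hat{\mathfrak r}$ via $\Phi_i\llsim\hat\Phi$ together with the $\boxdot$-inclusion. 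This would place $\nec(\theta\to\chi)$ in the bud and contradict $\chi\in\Phi_i^-$.

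Third, having a consistent description, I apply Proposition~\ref{propFinal} with $R=\rrel_{\rm c}$ to obtain a $\rel$-Final $\Psi^*$ above it, and set $C^*:=[\Psi^*]$. I then check $\Phi\rel C^*$ directly from the construction, and $\Phi\peq C^*$ by the cluster-confluence argument of Lemma~\ref{lemmClusConf}\eqref{itClusConfFor}. In that argument, each $\Phi$ is simulated into $[\Psi^*]$ by a label of the form $(\Phi,\varnothing,\varnothing)$, and $\rel$-Finality forces all the simulation images into the same cluster.

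The main obstacle is the second step: uniformity over \emph{all} nuclear defects at once, since there are infinitely many resolution points $\Phi$. I would handle this with compactness, reducing to finitely many $\Phi_1,\dots,\Phi_k$. For finitely many, I would iterate Lemma~\ref{lemmClusConf}: starting from one common witness, push it $\peq$-upward past each $\Phi_i$ using forward confluence, and use that all $\Phi_i$ lie in or just above $\mathfrak r$, so that their $\boxdot$-theories coincide by Lemma~\ref{lemmBoxLL}. If this coincidence fails, I would fall back to choosing $C^*$ $\llsim$-Final above $\hat{\mathfrak r}$ and verifying the inclusions pointwise.
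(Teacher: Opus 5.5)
Your second step aims at a statement that is false in general, so the argument cannot be completed as planned. You ask for a single prime theory $\Psi$ with $\Phi^+\subseteq\Psi$ and $\Phi\rel\Psi$ for \emph{every} nuclear resolution point $\Phi$. That amounts to asking that $\Xi=\bigcup\Phi^+$ be consistent. But different nuclear defects at $[w]$ can require incompatible resolution points. Nothing prevents $(w,\overline{\neg p})$ and $(w,\overline{\neg\neg p})$ from both being nuclear, and then one resolution point contains $p$ while the other contains $\neg p$. For a concrete case, take a reflexive $a$ and a cluster $\{b_0,b_1\}$ lying both $\rel$- and $\peq$-above $a$, with $p$ true only at $b_0$. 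This is an $\ikf$ model in which $b_0$ and $b_1$ resolve exactly these two defects of $a$. This is why the lemma only asks for a \emph{cluster}: $\Phi\peq C^*$ means that some element of $C^*$, possibly a different one for each $\Phi$, lies above $\Phi$.

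The facts you use to justify consistency also fail:
\begin{itemize}
\item A resolution point $\Phi\seq h(w')$ with $w'\equiv w$ cannot lie in $\mathfrak r$, because $h(w')\in\hat{\mathfrak r}$ is $\peq$-maximal there. Hence $\Phi\not\llsim h(w')$, and by Lemma~\ref{lemmBoxLL} $\Phi^\boxdot$ strictly extends $h(w')^\boxdot$ rather than coinciding with it.
\item Consequently Lemma~\ref{lemmregionProps}\eqref{itregionTwo} does not apply to $\Phi$, and ``$\Phi_i\llsim\hat\Phi$'' is false. Persistence runs from the bud up to $\Phi_i$, not from $\Phi_i$ down to a bud.
\item $\Phi\peq\Psi$ and $\Phi\rel\Psi$ give $\Phi\subseteq\Psi$ and $\Phi^\nec\subseteq\Psi$, not $\Phi^\nec\subseteq\Phi$.
\item The witness $\Psi_\Phi$ you invoke is essentially what has to be constructed. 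The entanglement you are actually given is between $h(w)$ and $\Phi$.
\end{itemize}
Your third step depends on the second, since you need $\Phi\peq\Psi^*$ before simulating into $[\Psi^*]$. The fallback to a $\llsim$-Final cluster gives neither $\Phi\rel C^*$ nor $\Phi\peq C^*$.

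The missing idea is to drop any global consistency claim and let Finality do the uniformisation, routing everything through one fixed resolution point.
\begin{itemize}
\item Fix a nuclear defect $\delta_0=(w_0,\varphi_0)$ with resolution point $\Phi_0$. By Lemma~\ref{lemmClusConf}\eqref{itClusConfFor}, obtain $C_0$ with $\Phi_0\rel C_0$ and $h([w])\peq C_0$. Then use Proposition~\ref{propFinal} to choose $C^*$ with $C_0\peq C^*$ that is $\peq$-Final among clusters $\rel$-above $\Phi_0$.
\item Let $\Phi_1\seq h(w_1)$ be another resolution point. Forward confluence applied to $h(w_1)\peq C^*$ and $h(w_1)\rel\Phi_1$ yields $\Psi_1\seq\Phi_1$ with $C^*\rel\Psi_1$.
\item Since $h(w_1)\rel h(w_0)\rel\Phi_0\rel C^*$, Lemma~\ref{lemmClusConf}\eqref{itClusConfFor} applied to $h(w_1)\peq\Psi_1$ gives $C_1$ with $\Psi_1\rel C_1$ and $C^*\peq C_1$.
\item Now $\Phi_0\rel C^*\rel\Psi_1\rel C_1$ puts $C_1$ back among the clusters over which $C^*$ is Final. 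So $C_1=C^*$, hence $C^*\rel\Psi_1\rel C^*$, so $\Psi_1\in C^*$ and $\Phi_1\peq C^*$.
\item The same pattern gives $\Phi_1\rel C^*$: take a cluster $D$ with $\Phi_1\rel D$ and $C^*\peq D$, then push it once more above $\Phi_0$ and apply Finality.
\end{itemize}
Passing every comparison through the single $\Phi_0$ handles all resolution points at once, with no compactness over the $\Phi$'s.
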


\begin{proof}
The claim is vacuous if there are no nuclear defects in $[w]$, so assume there is at least one, $\delta_0=(w_0,\varphi_0)$, and let $\Phi_0 \seq h(w_0) $ be a point of resolution for $\delta$.
Using Lemma~\ref{lemmClusConf}\ref{itClusConfFor}, let $C_0$ be such that $ \Phi_0 \rel C_0 \seq h([w])$ and using Proposition~\ref{propFinal}, let $C^*$ be $\peq$-Final with $\Phi_0 \rel C^* \seq C_0 $.

Now, let $\delta_1=(w_1,\varphi_1)$ be another nuclear defect with $w_1\equiv w$ and $\Phi_1\seq h(w_1)$ be a point of resolution of $\delta_1$.
Since $C^* \seq h(w_1 ) \rel \Phi_1$, by forward confluence, there is $\Psi_1 $  such that $C^* \rel \Psi_1 \seq \Phi_1$.
We have that $h(w_1 ) \peq \Phi_1 \peq \Psi_1$ and $h(w_1) \rel h(w_0) \rel \Phi_0 \rel C^* $, so $\Psi_1 \seq h(w_1) \rel C^*$ and once again by Lemma~\ref{lemmClusConf}\ref{itClusConfFor}, there is $C_1$  such that $\Psi_1 \rel C_1 \seq C^*$.
But $\Phi_0  \rel C^* \rel \Psi_1 \rel C_1 $, so $ \Phi_0 \rel C_1 \seq C^*$, and since $C_*$ was chosen $\peq$-Final, $C_1 = C^*$.
It follows that $C^* \rel \Psi_1 \rel C^*$, which implies that $\Psi_1 \equiv C^*$ and hence we may assume $\Psi_1 \in C^*$, so that $\Phi_1 \peq C^*$.

It remains to check that $\Phi_1 \rel C^*$.
Since $  \Phi_1 \seq h(w_1) \rel C^*$, by Lemma~\ref{lemmClusConf}\ref{itClusConfFor}, there is a cluster $D$ such that $  \Phi_1 \rel D \seq C^*$.
Since $\Phi_0 \seq h(w_0 ) \rel h(w_1) \rel \Phi_1 \rel D $, there is $C''$ such that $\Phi_0\rel C'' \seq D \seq C^*$.
By $\peq$-finality of $C^*$, $C'' = C^* $, whence also $C^* = D$, and $\Phi_1 \rel C^*$.
\end{proof}

This cluster $C^*$ we have found will help us resolve nuclear defects in a very uniform way.
Below, recall that $\fA\sqcup \fB$ (without a subindex) indicates that the grafting is performed at the root.

\begin{lemma}\label{lemmVerticalOne}
Let $\Sigma\Subset\lanfull$ and $\fA$ be a canonical tree and $ w $ be a reflexive bud of $\fA$.
Let $\Delta_w $ be the set of nuclear $\Sigma$-defects in $[w]$.
Then, there are a fixed $\Sigma$-tree $\fC^*$ with root cluster $C^*$ and a family of $\Sigma$-trees $\{\fB_\delta:\delta \in \Delta_w \} $ with respective irreflexive roots $t_\delta$ such that, setting $\fC_\delta = \fB_\delta\sqcup \fC^*$, $(\fC_\delta,w)$ is a potential resolution for $\delta$ via a $\Sigma$-intuitionistic partial order $\peq' $ such that:
\begin{enumerate}[label=(\roman*)]

\item If $x\prec' y$ and $x \not \equiv w$, then $y \rler C^* $.

\item If $x \equiv w$, then $x\prec' y$ only if $y = t_\delta$ or $y\in C^*$.

\item If $\delta =(v,\varphi) \in \Delta_w$, then $v \prec' t_\delta \prec' C^*$.

\end{enumerate}
\end{lemma}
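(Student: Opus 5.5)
The plan is to build everything around the cluster $C^*$ supplied by Lemma~\ref{lemmnuclear}. Then each nuclear defect gets its own irreflexive resolution point $t_\delta$, placed $\rel$-below a single shared copy of $C^*$.

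\textbf{Building $\fC^*$.} Apply Lemma~\ref{lemmnuclear} to the bud $w$ to obtain a cluster $C^*\subseteq W_{\rm c}$. For every nuclear defect $\delta=(v,\varphi)\in\Delta_w$ and every point of resolution $\Phi\seq h(v)$, this cluster satisfies $\Phi\rel C^*$ and $\Phi\peq C^*$. Choose a set of $\Sigma$-representatives of $C^*$ and view it as a reflexive root cluster. Then use Lemma~\ref{lemmModalDefects} to extend it to a canonical $\Sigma$-tree $\fC^*$ whose root cluster is still $C^*$; this is possible because the extension only adds points strictly above.

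\textbf{Building the $\fB_\delta$.} For each $\delta=(v,\varphi)\in\Delta_w$, pick a point of resolution $\Phi_\delta\seq h(v)$ and let $t_\delta$ be a fresh irreflexive node with $h(t_\delta)=\Phi_\delta$. Graft $\fC^*$ above $t_\delta$; this is legitimate since $\Phi_\delta\rel C^*$. Modal defects $\ps\psi$ of $t_\delta$ that are not already witnessed inside $\fC^*$ are resolved by grafting Final witnesses via Lemma~\ref{lemmModalDefects}; these witnesses make up $\fB_\delta$. The combined tree $\fB_\delta\sqcup\fC^*$ is exactly $\fC_\delta$. Grafting $\fC_\delta$ onto $w$ is possible because $h(w)\rel\Phi_\delta$ (the resolution is nuclear, and Lemma~\ref{lemmregionProps} can be used for other members of $[w]$).

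\textbf{Defining $\peq'$.} Let $\peq'$ be the reflexive closure of the following pairs:
\begin{enumerate}[label=(\alph*)]
\item $v\prec' t_\delta$ for each $\delta=(v,\varphi)$;
\item $x\prec' c$ for $x\equiv w$ or $x=t_\delta$, and $c\in C^*$ with $h(x)\peq^\Sigma_{\rm c} h(c)$;
\item $x\prec' y$ for $x\not\equiv w$ such that $y\in C^*$ or $y$ lies above $C^*$ in $\fC^*$ and $h(x)\peq^\Sigma_{\rm c}h(y)$, with the exact pairs chosen by confluence propagation as in Lemma~\ref{lemmGraftExt}.
\end{enumerate}
Conditions (i)--(iii) are then immediate from the shape of the definition. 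Condition (iii) uses $\Phi_\delta\peq C^*$ together with the fact that $\peq^\Sigma_{\rm c}$ is preserved on representatives. The relation is pivotal with pivot $w$, since every strict pair starts in $\fA$ at or below $[w]$ or inside the grafted part. It resolves $\delta$ via $t_\delta$.

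\textbf{Main obstacle: expansiveness of $\peq'$.} Forward confluence at a pair $v\prec' t_\delta$ with $v\rel y$ requires a successor of $t_\delta$ above $y$. Here I would use that $y$ lies in $[w]$ or above, and that $h(w)\peq C^*$, $t_\delta\rel C^*$. This lets me push the required witness into $C^*$ or above it in $\fC^*$, using Lemma~\ref{lemmClusConf}\ref{itClusConfFor} and Finality of $C^*$ to stay in the fixed copy. Backward confluence at $x\rel t_\delta$ is the delicate point: the $\rel$-predecessors of $t_\delta$ are exactly those of $w$ plus $[w]$, and $v$ itself serves as witness because $w$ is reflexive, so $[w]\rel v$. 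For $y\in C^*$ I would use $\Phi_\delta\rel C^*$ together with the entanglement from Lemma~\ref{lemmnuclear}. The decisive fact is that one fixed $C^*$ works for all $\delta$ simultaneously; I would check this carefully, falling back on Lemma~\ref{lemmGraftExt} to repair any remaining confluence failures above $C^*$.
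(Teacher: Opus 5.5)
There is a genuine gap in how you build $\fC^*$. You obtain it from the representative cluster $C^*$ using Lemma~\ref{lemmModalDefects} alone, and only afterwards look for confluence witnesses inside it. Forward confluence of $\peq'$ needs much more structure above $C^*$ than that.

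\emph{Pairs $v\prec' t_\delta$.} Take any $y$ in $\fA_w$ strictly above $[w]$, so $v\rel y$. The witness $y'$ must satisfy $t_\delta\rel y'$, so it lies in $\fC_\delta$, and by (i) it lies in $\fC^*$.

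\emph{Pairs $x\prec' c$ with $x\equiv w$.} The same requirement arises for the same points $y$.

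\emph{Pairs $t_\delta\prec' c$.} It arises for every point of $\fB_\delta$ above $t_\delta$. Your extra modal witnesses ``not already witnessed inside $\fC^*$'' are exactly points with no image in $\fC^*$.

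Propagating this upward, $\peq'$ must contain a total, expansive, root-preserving relation from all of $\fA_w$ into $\fC^*$, and from each $\fB_\delta$ into $\fC^*$. A tree that merely resolves the modal defects of $C^*$ has no reason to contain such points. Lemma~\ref{lemmClusConf}\ref{itClusConfFor} produces them in $W_{\rm c}$, not in your tree. The Finality of $C^*$ from Lemma~\ref{lemmnuclear} is a statement about that cluster in $W_{\rm c}$ and says nothing about what sits above it in $\fC^*$, so it cannot keep you ``in the fixed copy''. Likewise, Lemma~\ref{lemmGraftExt} does not select pairs in a given tree; it builds an extension of the target. Your clause (c) therefore tacitly presupposes a different $\fC^*$ from the one you constructed.

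The missing idea is to build $\fC^*$ so that it dominates everything before any grafting, which is what the paper does.
\begin{enumerate}
\item Fix each $\fB_\delta$ first as a $\Sigma$-tree rooted at $\Phi_\delta$, via Lemma~\ref{lemmModalDefects}.
\item Starting from the finite cluster $C^*$, use $h([w])\peq C^*$ and Lemma~\ref{lemmGraftExt} to get $\fC_0$ with $\fA_w\peq\fC_0$ via a root-preserving $E_0$.
\item Enumerate $\Delta_w$ and use $\Phi_{\delta_i}\peq C^*$ to extend $\fC_i$ to $\fC_{i+1}$ with $\fB_{\delta_i}\peq \fC_{i+1}$ via a root-preserving $E_{\delta_i}$.
\end{enumerate}
Each step is an initial extension, so the earlier relations stay expansive, and $\fC^*:=\fC_I$ serves all $\delta$ at once. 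Then $\peq_\delta=\{(v,t_\delta)\}\cup E_0\cup E_\delta$ is expansive essentially by construction, and (i)--(iii) follow from root-preservation.

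If you instead ``repair'' confluence failures separately for each $\delta$ after grafting, you get a different tree above $C^*$ for each $\delta$. That violates the requirement of a single fixed $\fC^*$, which Proposition~\ref{propnuclear} relies on when it identifies the copies of $\fC^*$ across the different $\fC_\delta$.
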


\begin{proof}
For each $\delta =(v,\varphi) \in \Delta_w$, choose $\Phi_\delta \seq h(v)$ resolving $\Delta$ and let $\fB_\delta$ be a $\Sigma$-tree with irreflexive root $\Phi_\delta$, constructed using Lemma~\ref{lemmModalDefects}.

Choose $C^*$ as in Lemma~\ref{lemmnuclear}.
By picking a set of $\Sigma$-representatives, we may assume that $C^*$ is finite.
Viewing them as one-cluster sprouts, from $h([w])\peq_{\rm c} C^* $ and Lemma~\ref{lemmGraftExt}, we can extend $C^*$ to some $\fC_0$ such that $\fA_w \peq \fC_0$ via a root-preserving relation $E_0$, in the sense that if $r\in [w]$ and $r \mathrel E_0 c$ then $c\in C^*$.
If we enumerate $\Delta_w$ by $(\delta_i)_{i<I}$, from $\Phi_\delta \peq C^*$, we may similarly extend $\fC_0$ to some $\fC_1$ so that $\fB_{\delta_0} \peq \fC_1 $, and continue inductively to obtain $\fC^* = \fC_I$ with the property that $\fA_w \peq \fC^*$ and $\fB_{\delta } \peq \fC_\delta $ via a root-preserving relation $E_\delta$ for all $\delta\in \Delta_w$.

Finally, for a defect $\delta= (w,\varphi) \in \Delta_w$, let $ \fC_\delta := \fB_\delta\sqcup \fC^*$ and  and define a pivotal relation $E$ on $ \fA \sqcup _w  \fC_\delta$ given by
\[{\peq_\delta} = \{ (v,t_\delta) \} \cup E_0 \cup E_\delta.\]
It is readily checked that $\peq_\delta$ satisfies all required properties to ensure that $(\fC_\delta,w)$ resolves $\delta$.
\end{proof}

This uniform resolution of nuclear defects will allow us to resolve them all simultaneously, `when the time is right'.
One way to think about this construction is that, given a bud $w$ with some nuclear defects, we place the resolution points of all such defects as immediate $\prec$-successors of $w$, then place them all beneath $\fC^*$.
As it is generally more convenient to work with trees, we will instead give each one its own copy of $\fC^*$, but then have $\peq$ identify equivalent points in all of these copies.
Note that this step will force $\peq$ to be a preorder, rather than a partial order.

\begin{proposition}\label{propnuclear}
Let $\Sigma\Subset\lanfull$ and $\fA$ be a stable, canonical $ \Sigma$-tree.
Let $w\in\dom\fA$ be a bud and assume that if $\delta$ is any defect of $\fA$ with potential resolution $w'\rler w$, then  $w' \equiv w$ and $\delta$ is nuclear.

Then, there exists a stable extension $\fA'$ of $\fA$ local to $w$.
\end{proposition}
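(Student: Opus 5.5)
The plan is to resolve all nuclear defects at $[w]$ simultaneously, using the uniform resolutions of Lemma~\ref{lemmVerticalOne}.

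\emph{Setup.} First dispose of the irreflexive case. If $w$ is irreflexive, a nuclear resolution would require $u\equiv w$ with $h(w)\rel_{\rm c}h(\fB)$ and $w\prec$ a root of $\fB$. We would then graft each $\fB_\delta\sqcup\fC^*$ separately, or, more simply, reduce to the reflexive construction, since the argument below only uses the cluster $C^*$ of Lemma~\ref{lemmnuclear}. In the main case, take $\fC^*$ and the family $\{\fB_\delta:\delta\in\Delta_w\}$ from Lemma~\ref{lemmVerticalOne}. Define $\fA':=\fA\sqcup_w\{\fC_\delta:\delta\in\Delta_w\}$, where each $\fC_\delta=\fB_\delta\sqcup\fC^*$ carries its own copy $\fC^*_\delta$ of $\fC^*$. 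This is clearly local to $w$.

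\emph{Order.} Let $\peq_\delta$ be the relations from Lemma~\ref{lemmVerticalOne}, each pivotal with pivot $w$. Let $\approx$ identify corresponding points in the different copies $\fC^*_\delta$. This relation is expansive, since it is a union of isomorphisms between copies grafted at the same point, and it preserves labels, so it is $\Sigma$-intuitionistic. Set
\[{\peq'}:=({\peq_\fA}\cup{\approx})\curlyvee\textstyle\bigcurlyvee_\delta{\peq_\delta},\]
using Lemma~\ref{lemmExpJoin}; this is why $\peq'$ is only a preorder. Each $\delta\in\Delta_w$ is then resolved by $t_\delta$. I will show that $\peq'$ is computed as ${\approx}\circ{\peq_{\rm new}}\circ{\peq_\fA}$, in the style of Lemma~\ref{lemmCompose}. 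Properties (i)--(iii) of Lemma~\ref{lemmVerticalOne} are what make this composite transitive. By (ii), new arrows leaving $[w]$ land only in some $t_\delta$ or in $C^*$. By (i), arrows from elsewhere land above $C^*$, where $\approx$ closes things up. Hence a chain $\peq_{\delta}$, then $\peq_{\delta'}$, collapses modulo $\approx$.

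\emph{Pivotality and admissibility.} Admissibility of $w$ holds because, by hypothesis, no defect has a resolution strictly below $w$ other than nuclear ones at $[w]$. Since every defect resolved here is nuclear, $w$ becomes a stub, as $w\prec t_\delta$ only if $w\rel t_\delta$. Pointwise pivotality follows as in Lemma~\ref{lemmCurlyvee}: every new strict pair has meet in $[w]$ or lies inside $\bigcup_\delta\fC_\delta$, with pivot $w$, or composes with an old pair, in which case Lemma~\ref{lemmJoins} gives pivot $w\sqcap w'$.

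\emph{Stability.} Old defects of $\fA$ not at $[w]$ keep their resolutions, and their admissible resolution points $c$ satisfy $w\not\rel c$ by the hypothesis; this is argued as in Proposition~\ref{propStillStable}. New defects lie in some $\fC_\delta$. For these I would invoke Proposition~\ref{propExistsStrategic} inside $\fC_\delta$ to obtain a potential resolution $c$ with $w\srel c$, and then check admissibility as in the last paragraph of Proposition~\ref{propStillStable}. The step I expect to be the main obstacle is the transitivity and expansiveness of the joined order across different copies of $\fC^*$, specifically backward confluence at points of $C^*_\delta$ that are reached from $[w]$ through a different copy. My first attempt would be to use that $C^*$ is both $\rel$- and $\peq$-Final (Lemma~\ref{lemmnuclear}) to supply the needed witnesses, via Lemma~\ref{lemmClusConf}.
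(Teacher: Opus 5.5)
Your construction of $\fA'$ matches the paper's: you graft every $\fC_\delta=\fB_\delta\sqcup\fC^*$ onto $w$, each with its own copy of $\fC^*$, and let the order identify the copies. The gap is in the stability argument for the new defects, which points in the wrong direction and would fail.

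You note yourself that $w$ becomes a stub, since $v\prec' t_\delta$ and $v\rel t_\delta$. Admissibility of a point $c$ requires every $c'\rrel c$ to be live, so no $c$ with $w\srel c$ can be admissible in $\fA'$. Searching for a potential resolution ``with $w\srel c$'' inside $\fC_\delta$ and then checking admissibility ``as in the last paragraph of Proposition~\ref{propStillStable}'' therefore cannot succeed. That paragraph depended on the grafted resolution being live, and this is exactly what fails for nuclear resolutions.

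What is needed, and what the paper argues, is the opposite: after the bouquet, $w$ is no longer a potential resolution for any defect. That is, every defect $(v,\varphi)$ of $\fA'$ with $w\rrel v$ must have a potential resolution strictly below $w$. For a new defect with $v$ in some $\fC_{\delta'}$, the missing idea is to exploit the region jump:
\begin{itemize}
\item Because the resolution was nuclear, the root $t$ of $\fC_{\delta'}$ satisfies $\rho(w)\ll\rho(t)$, so $t$ is a breaking point.
\item The rung of a ladder (Lemma~\ref{lemmResInductive}) at $t$ has a root $r$ with $w\ll t\llsim r$, so it can be taken to be strict.
\item The second clause of Proposition~\ref{propExistsStrategic} then places a strategic potential resolution strictly below $t$. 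The paper uses this to obtain one strictly below $w$, away from the stub.
\end{itemize}
Simply invoking Proposition~\ref{propExistsStrategic} ``inside $\fC_\delta$'' gives you no control over where the resolution lands.

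There are also two smaller problems.

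First, you read the hypothesis backwards. $w'\rler w$ means $w\rrel w'$, so the assumption concerns potential resolutions at or above $w$. It is what tells you that old defects which are not nuclear at $[w]$ have no potential resolution at or above $w$. It says nothing about points below $w$ being live, so it cannot be why $w$ is admissible. The admissibility you need for Lemma~\ref{lemmIsOrd} must come from the context of application, where $w$ is chosen maximal admissible in Lemma~\ref{lemmInductive}.

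Second, the copies of $\fC^*$ sit over the distinct roots $t_\delta$, not over a common point. So the identification $\approx$ is not backward confluent on its own: take $t_\delta\rel y\approx y'$ with $y'$ in another copy, and no witness exists. Hence Lemma~\ref{lemmExpJoin} does not apply to a family containing $\approx$. The paper instead defines the order directly as the union of the identity, the $\peq_\delta$, and the identification of copies, and leaves the verification to the reader; this is the same obstacle you flag.
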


\begin{proof}
Let $\fC^*$, $\fB_\delta$, $\fC_\delta$, and $\peq_\delta $ be as given by Lemma~\ref{lemmVerticalOne} and let $\fA'$ be the result of grafting every $\fC_\delta$ onto $w$.
Let $\peq$ be given by $x\peq y$ if and only if
\begin{enumerate}[label=(\alph*)]

\item $x=y$,

\item $x\peq_\delta y$ for some $\delta\in \Delta_w$, or

\item $x,y$ are two copies of the same point in $\fC^*$.

\end{enumerate}
The reader may verify that $\peq$ is a $\Sigma$-intuitionistic order resolving all defects.

What remains to be shown is that $w$ is not a potential resolution for any defect.
Let $\delta=(v,\varphi)$ be a defect of $\fA'$ such that $w \rrel v$.
If $v\in\dom\fA$, then by assumption, $\delta$ has a potential resolution $w'\srel w$, which by Proposition~\ref{propStillStable} (applied iteratively for each $\delta'\in \Delta_w$), $w'$ is still a potential resolution for $\delta$ in $\fA'$.
Otherwise, $v\rrel t$, where $t$ is the root of some $\fC_{\delta'}$.
Thus the ladder for $\delta$ involves some local potential resolution $\fD$ for $\delta$ at $t $.
For the root $r$ of $\fD $, we have $w\ll t \llsim r $, so $\fD$ can be viewed as a strict local resolution for $\delta$.
By Proposition~\ref{propExistsStrategic}, there is a potential resolution $w'\srel w$ for $\delta$.
\end{proof}

We call an extension as given by Proposition~\ref{propnuclear} a {\em bouquet.}

\section{Well Foundedness}

The process of resolving defects typically leads to ever-larger canonical trees.
We thus cannot use standard measures such as the size of the tree or the number of defects to prove that eventually we will reach a $\Sigma$-model, i.e., a canonical tree without intuitionistic defects.
Instead, we will need to assign a transfinite number to each canonical sprout and show that this number decreases during the resolution process.
This is possible due to some deep order-theoretic results, which we review in this section before applying them to canonical trees.

\begin{definition}
Let $\Lambda= (\dom \Lambda, \leq )$ be a preorder.
\begin{enumerate}

\item $\Lambda$ is a {\em well quasiorder} if, whenever $(\lambda_i)_{i<\omega}$ is a sequence of elements of $\Lambda$, there are $i\leq j$ such that $\lambda_i\leq\lambda_j$.

\item  $\Lambda$ is {\em Noetherian} if there is no infinite, strictly increasing sequence on $\Lambda$.

\item $\Lambda$ is a {\em well order} if $\leq$ is a linear order and either $\Lambda$ is a well quasiorder or, equivalently, $(\dom\Lambda,\geq)$ is Noetherian.

\end{enumerate}
\end{definition}

The crucial property of $\subT$ is that it defines a well quasiorder on the class of $\Sigma$-sprouts.

\begin{theorem}\label{theoKruskalSprout}
Fix finite $\Sigma\subseteq \lanfull$.
If  $(\fA_i)_{i<\omega}$ is a sequence of $\Sigma$-sprouts, then there are $i<j$ such that $\fA_i \subT \fA_j$.
\end{theorem}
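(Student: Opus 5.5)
We reduce the statement to Kruskal's tree theorem in its labelled form: finite rooted trees with labels from a well quasiorder are well quasi-ordered under label-respecting, infimum-preserving (hence order-preserving) embeddings. Since $\Sigma$ is finite, the set $\lbl\Sigma$ is finite. Hence equality on labels is a well quasiorder on them.

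The first step is to turn each $\Sigma$-sprout $\fA$ into a finite rooted tree $T(\fA)$ whose nodes are the full $\rel_\fA$-clusters. We add an artificial root above the (possibly several) roots of $\fA$, and order clusters by $\srel_\fA$. Since $\fA$ is tree-like, the immediate-successor relation on clusters, together with the artificial root, gives a genuine finite tree.

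Each cluster $C$ is then labelled by the finite datum
\[\lambda(C) = \bigl(\{\ell_\fA(x): x\in C\},\ \text{whether } C \text{ is reflexive}\bigr).\]
This is an element of the finite set $2^{\lbl\Sigma}\times\{0,1\}$, and we order it by equality. The artificial root receives a special label.

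Applying Kruskal's theorem to $(T(\fA_i))_{i<\omega}$ gives $i<j$ and an embedding $f\colon T(\fA_i)\to T(\fA_j)$. This embedding preserves labels and strict order between clusters, and sends the artificial root to the artificial root. Without loss of generality the artificial root maps to the artificial root: if not, we prepend a long chain of artificial roots, or simply note that the label forces it.

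The second step is to extract from $f$ a homomorphism $h\colon\dom{\fA_i}\to\dom{\fA_j}$ in the sense of $\subT$. Since $f(C)$ has the same set of labels, we map each $x\in C$ to some $y\in f(C)$ with $\ell(y)=\ell(x)$, so that $\ell(x)\subT\ell(h(x))$. Preservation of $\rel$ then follows in three ways:
\begin{itemize}
\item inside a reflexive cluster, from reflexivity of the image cluster, which is encoded in the label;
\item across an irreflexive singleton, trivially;
\item between clusters, from $f$ preserving the tree order.
\end{itemize}

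The main obstacle is that $\subT$ requires $h$ to be expansive, i.e. forward and backward confluent with respect to $\rel$, and not merely monotone.

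Backward confluence is the easier half. If $x'\rel_\fA x$ and $h(x)=y$ with $y'\rel y$, we need a preimage predecessor, and Kruskal embeddings do not hit every predecessor of $y$. I therefore expect to reinterpret it: the frame relation here relates a node to all its strict descendants, so it suffices to check confluence only against the image. I will verify which directions the definition of $\simu$ actually demands of $h$ and, if necessary, strengthen the embedding accordingly.

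Forward confluence requires that every $\rel$-successor of $h(x)$ in $\fA_j$ lie above some image of a successor of $x$. This fails for naive embeddings. The plan is to use the stronger, gap-embedding variant of Kruskal's theorem, or to first enrich the labels so that the extra branches are harmless. Concretely, I would label each node additionally with the finite set of labels of its subtree, i.e. the "type" of its down-set. Because there are only finitely many such types, this keeps labels in a finite set. With these enriched labels I would then argue that $h$ can be extended to an expansive relation, as the paper's $\subT$ via Proposition~\ref{propExpandingClos} permits.

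Getting this confluence right is where I expect the real work. I would attempt it by induction on height, using the subtree-type labels to match successors.
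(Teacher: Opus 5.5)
Your steps one to three are correct and are in substance the paper's proof. The paper views each sprout as a tree of clusters labelled by $\{\ell(w):w\in C\}$, applies Kruskal's theorem, and does nothing more. Your reflexivity flag and artificial root also handle two details that this sketch glosses over. Label-sets alone cannot stop a reflexive singleton from being sent to an irreflexive one with the same label, and sprouts need not have a unique root. The trouble is everything after step three: the ``main obstacle'' does not exist, and the repair you plan is aimed at a false statement.

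In this paper the two confluence conditions are as follows. Forward confluence of $R$ says: if $x\rel_\fA y$ and $x\mathrel R x'$, then $y\mathrel R y'$ and $x'\rel_\fB y'$ for some $y'$. Backward confluence says: if $x\rel_\fA y$ and $y\mathrel R y'$, then $x\mathrel R x'$ and $x'\rel_\fB y'$ for some $x'$. Both quantify only over $\rel_\fA$-pairs, never over successors or predecessors of points of $\fB$.

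For a function $h$ the witnesses are forced ($y'=h(y)$, resp.\ $x'=h(x)$). So both conditions reduce to $x\rel_\fA y\Rightarrow h(x)\rel_\fB h(y)$, which you already checked. This is also all that the clauses of $\mathtt{Hom}(\fA)$ in the proof of Proposition~\ref{propFinal} assert. Your $h$ therefore already witnesses $\fA_i\subT\fA_j$, and you should stop there.

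The back conditions you wrote instead would make the theorem false. These are: every successor of $h(x)$ lies above an image of a successor of $x$, and every predecessor of $h(x)$ comes from a predecessor of $x$. Under them, finite irreflexive chains with all labels empty are pairwise incomparable. If you keep only your forward condition, the same holds once just the root is labelled $(\{p\},\varnothing,\varnothing)$. In both cases $h$ would have to be strictly increasing, send the root to the root and the leaf to the leaf, and hit every immediate successor of each image, which forces equal length. No gap-embedding form of Kruskal's theorem or subtree-type enrichment of labels can produce such maps. As written, the proposal ends on an unresolvable step instead of the one-line observation that completes it.
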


\begin{proof}
This is by now standard (see e.g.~\cite{pml}).
We may view $\Sigma$-labelled trees as trees of clusters, where each cluster $C$ is labelled by $\{\ell(w):w\in C\}$.
Then, apply Kruskal's theorem.
\end{proof}

As a consequence of this, the set of $\Sigma$-regions is Noetherian.

\begin{theorem}\label{theoNoetherian}
Fix $\Sigma\Subset \lanfull$ and let $\mathfrak R_\Sigma$ be the set of $\Sigma$-regions of $\cl M_{\rm c}$.
Then, $(\mathfrak R_\Sigma,\lleq)$ is Noetherian.
\end{theorem}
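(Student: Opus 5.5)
Suppose, towards a contradiction, that $\mathfrak r_0 \ll \mathfrak r_1 \ll \mathfrak r_2 \ll \cdots$ is an infinite strictly increasing sequence of $\Sigma$-regions. By definition each $\mathfrak r_i$ is a full $\llsim_{\rm c}$-Final cluster for some $\Sigma$-sprout $\fA_i$. We pick $\Theta_i\in\mathfrak r_i$ that is $\llsim$-Final for $(\fA_i,r_i)$, with $r_i$ a root of $\fA_i$. Applying Theorem~\ref{theoKruskalSprout} to $(\fA_i)_{i<\omega}$ gives indices $i<j$ with $\fA_i\subT\fA_j$, witnessed by a homomorphism $g\colon\dom{\fA_i}\to\dom{\fA_j}$. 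We are free to send the root $r_i$ to a root of $\fA_j$; if this needs arranging, first append to every sprout a fresh irreflexive root with empty label, as in Lemma~\ref{lemmBudNonEmpty}, which does not change $\llsim$-Finality.

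Next we transfer embeddability upward. Since $\mathfrak r_j$ is $\llsim$-Final for $\fA_j$, there is $\Phi\ggsim\Theta_j$ with $(\fA_j,r_j)\subT\Phi$, witnessed by some $h\colon\dom{\fA_j}\to W_{\rm c}$. Homomorphisms compose (Proposition~\ref{propExpandingClos}), so $h\circ g$ shows $(\fA_i,r_i)\subT(\cl M_{\rm c},\Phi')$, where $\Phi'=h(g(r_i))$. If $g(r_i)$ is not the root, then $\Phi\llsim\Phi'$, because $h$ preserves $\rel$ and ${\rel}\subseteq{\llsim}$ (Lemma~\ref{lemLL}). In either case $\Theta_i\llsim\Theta_j\llsim\Phi\llsim\Phi'$, using $\mathfrak r_i\ll\mathfrak r_j$.

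Now apply clause (b) of $\llsim$-Finality of $\Theta_i$ for $(\fA_i,r_i)$. It gives $\Phi'\llsim\Theta_i$, hence $\Theta_j\llsim\Theta_i$, so $\mathfrak r_j$ and $\mathfrak r_i$ are the same $\llsim$-cluster. This contradicts $\mathfrak r_i\ll\mathfrak r_j$ being strict. Hence no infinite strictly increasing chain exists and $(\mathfrak R_\Sigma,\lleq)$ is Noetherian.

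The main obstacle is the bookkeeping of roots. The homomorphism supplied by Kruskal's theorem must send a root to a point lying $\llsim$-above the image of the root of $\fA_j$, and the composite embedding must be pointed at the root of $\fA_i$, so that Finality for $\fA_i$ (which is insensitive to the choice of root for $R=\llsim$) can be used. I expect the tree-like shape of sprouts, together with monotonicity of homomorphisms along $\rrel$, to settle this: every point of $\fA_j$ is $\rrel$-above some root, so its image is $\llsim$-above that root's image.
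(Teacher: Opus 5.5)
Your proof is correct and is essentially the paper's argument. Kruskal's theorem gives $\fA_i\subT\fA_j$, and composing with an embedding of $\fA_j$ $\llsim$-above $\mathfrak r_j$ gives an embedding of $\fA_i$ $\llsim$-above $\mathfrak r_j$, so $\llsim$-Finality of $\mathfrak r_i$ forces $\mathfrak r_j\lleq\mathfrak r_i$. Your observation that $r_j\rrel g(r_i)$ yields $h(r_j)\llsim h(g(r_i))$ makes explicit the root bookkeeping the paper leaves implicit.

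Your remarks that $r_i$ may be sent to a root, and that appending a fresh empty-labelled root preserves $\llsim$-Finality, are unneeded. The second is also not valid in general, since the fresh root requires a $\rel$-predecessor of the embedding lying $\llsim$-above the region, which need not exist. If rootedness must be arranged, simply replace each $\fA_i$ by $(\fA_i)_{r_i}$, which preserves $\llsim$-Finality for the root.
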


\begin{proof}
Let $(\mathfrak r_i)_{i<\omega}$ be an infinite increasing sequence of regions.
For each $i$, let $\fA_i$ be a $\Sigma$-sprout so that $\mathfrak r_i$ is $\Sigma$-Final.
Using Theorem~\ref{theoKruskalSprout}, there are $i<j$ such that $\fA_i \subT \fA_j$; but this contradicts $\fA_i$ being $\llsim $-Final for $\fA_i$, since any $\Phi\in \mathfrak r_j$ such that $\fA_j\subT \Phi$ also satisfies $\fA_i \subT \Phi$ by the transitivity of $\subT$.
\end{proof}

If $(N,\leq)$ is any Noetherian partial order, we can assign to every $x\in N$ an ordinal $\| x\|$ such that $x<y$ implies that $\| x\| > \| y\|$.
To this end, we recall that $\sf Ord$ is a class of linear orders with the property that every well order is isomorphic to a unique $\xi\in \sf Ord$.
The class $\sf Ord$ is itself well ordered by embeddability, which we write $ \leq  $.
Moreover, $\varnothing $ is an ordinal (usually denoted $0$), every ordinal $\xi$ has a unique succesor $\xi+1$, and, if $\Xi$ is a set of ordinals then $\bigcup \Xi$ is an ordinal.
With this in mind, we may define the following.

\begin{definition}
Let $\Lambda $ be a Noetherian partial order.
For $\lambda\in \dom\Lambda$, we define
\[\|\lambda\|=\bigcup_{\lambda<\eta} (\|\eta\|+1).\]
\end{definition}

Using the assumption that $\Lambda$ is Noetherian, it is readily checked that this definition is sound and $\|\lambda\|$ is always an ordinal.
We are in particular interested in the case where $\Lambda=\mathfrak R_\Sigma$.

Let us review the ordinal arithmetic we will use.
We will in particular need to extend elementary operations to the ordinals.
Given ordinals $\xi,\zeta$, we define $\xi+\zeta$ by recursion on $\zeta$ as follows:
\begin{enumerate}

\item $\xi+0=\xi$

\item $\xi+(\zeta+1)=(\xi+\zeta)+1$

\item $\xi+\zeta=\displaystyle\bigcup_{\vartheta<\zeta}(\xi+\vartheta)$, for $\zeta$ a limit ordinal.

\end{enumerate}
Likewise, we define $\xi\cdot\zeta$ by
\begin{enumerate}

\item $\xi\cdot 0=0$,

\item $\xi\cdot(\zeta+1)=\xi\cdot\zeta+\xi$, and

\item $\xi\cdot\zeta=\displaystyle\lim_{\vartheta<\zeta}\xi\cdot\vartheta$, for $\zeta$ a limit ordinal.

\end{enumerate}
Similarly, we define $\xi^ \zeta$ by:

\begin{enumerate}

\item $\xi^0=1$,

\item $\xi^{\zeta+1}=\xi^\zeta\cdot\xi$, and

\item $\xi^\zeta=\displaystyle\lim_{\vartheta<\zeta}\xi^\vartheta$, for $\zeta$ a limit ordinal.

\end{enumerate}

The order-type of the natural numbers is denoted $\omega$ and is the smallest infinite ordinal.
This will typically be out base for exponentiation.
Numbers of the form $\omega^\xi$ are {\em additivey indecomposable,} in the sense that $\alpha,\beta<\omega^\xi$ implies that $\alpha+\beta<\omega^\xi$.
The {\em Cantor normal form} theorem states that every ordinal may be uniquely written in the form $\omega^{\alpha }    + \beta $, where $\beta<\omega^ {\alpha}$.
The {\em natural sum} $\oplus$ is a variant of $+$ which is always computed in Cantor normal form.
Formally, it is given by $\al \oplus 0 = 0 \oplus \al = \al $ and
\[
(\om^\al+\be) \oplus (\om^\ga+\de)=
\begin{cases}
 \om^\al+(\be \oplus (\om^\ga+\de)) &\text{if $\ga \leq \al$,}\\
\om^\ga+ ((\om^\al+\be) \oplus  \de) &\text{otherwise,}
\end{cases}
\]
where $\om^\al+\be$, $\om^\ga+\de$ are in Cantor normal form.

We will make use of   {\em multiplicativey indecomposable numbers,} which are those of the form $\omega^{\omega^\xi}$ and have the additional property that $\alpha,\beta<\omega^{\omega^\xi}$ implies that $\alpha\cdot\beta<\omega^{\omega^\xi}$.
As these numbers are quite important to us, we introduce the notation $\mu(\xi):= \omega^{\omega^\xi}$.

\section{Termination}

We finally have all the tools needed to prove that the defect-resolution procedure terminates.
Our argument will be by transfinite induction, using the order-theoretic machinery of the last section.

\begin{definition}
Let $\Sigma\Subset\lanfull$ and $\fA$ be a canonical $\Sigma$-tree.
For $w\in \dom{\fA}$, define $\|w\| =  \min \{ \|\rho(x)\|: x\rrel w\}$.

We let $\partial ^\bullet _\fA  w  $ (or $\partial^ \bullet  w $ if $\fA$ is clear from context) be the number of internal defects of the form $(w,\varphi)$ with $\varphi\in\Sigma$ which are not resolved by $\peq_\fA$, and similarly let $\partial ^\circ _\fA  w  = \partial _\fA  w  $ be the number of external defects.
Then, set $\partial_\fA w  =\partial w  =  \omega \partial ^\bullet   w  + \partial ^\circ _\Sigma  w $.
Recall that $ x \rrel^\bullet y $ if and only if $ x \rrel  y $ but $\rho(x) \ggeq \rho(y)$, then set $\nabla_\fA(w) = \nabla (w) := \bigoplus_{v\rrel^\bullet_\fA w} \partial  (v)$.

Define $ \beta(w) = 0 $ if there is $w' \rrel^\bullet w$ such that $w'$ is a bud, otherwise, $ \beta(w) = 1 $.

Finally, set
\[o_\fA(w) =  o(w) : =  \mu \big ( \omega^3 \|w\|+\omega^2 \beta(w) + \nabla (w) \big ) \cdot    \big (1 + \bigoplus_{v\srel w}   o(v) \big ) \]
and $  o(\fA) : = o(r)$, where $r$ is a root of $\fA$.
\end{definition}

The following are useful properties of our ordinal mapping which are not hard to check.

\begin{lemma}\label{lemmOmon}
If $\Sigma\Subset\lanfull$, $\fA$ is a canonical $\Sigma$-tree, and $w \rel _\fA v\in \dom{\fA }$, then

\begin{enumerate}

\item if $w \equiv v$ then $o(w) = o(v)$,

\item if $w \srel v$ then $o(w) > o(v)$,

\item \label{itOMuMult} $o(w)$ is a multiple of $\mu(\omega^3 \|w\| )$, and

\item \label{itOMonMu} if $\|w\| > \|v\|$, then $\mu(\omega^3 \|w\| ) > o(w)$.

\end{enumerate}
\end{lemma}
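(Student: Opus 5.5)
Throughout, I read $v\srel w$ in the definition of $o(w)$ as ranging over the strict $\rel$-successors of $w$; this is what makes the recursion well defined, since it descends on the finite tree $\fA$. I also record one basic fact: $\|\cdot\|$ is antitone along $\rrel$. If $w\rrel v$, then every $x\rrel w$ also satisfies $x\rrel v$, so $\|v\|\le\|w\|$.

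\textbf{Items 1 and 2.} For item 1, suppose $w\equiv v$. Since $\fA$ is tree-like, $w$ and $v$ have the same strict successors and the same $\rrel$-predecessors. They also have the same $\rrel^\bullet$-predecessors, because $\rho$ is applied to the same points on both sides. Hence $\|w\|=\|v\|$, $\beta(w)=\beta(v)$ and $\nabla(w)=\nabla(v)$. The factors $(1+\bigoplus o(\cdot))$ coincide as well, so $o(w)=o(v)$.

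For item 2, suppose $w\srel v$. Then $o(v)$ is one of the summands of $\bigoplus_{u}o(u)$ in the definition of $o(w)$. The natural sum dominates each of its summands, so $1+\bigoplus_u o(u)\ge 1+o(v)>o(v)$. The left factor $\mu(\cdots)$ is at least $1$, and $\alpha\cdot\gamma\ge\gamma$ whenever $\alpha\ge1$. Therefore $o(w)>o(v)$.

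\textbf{Item 3.} Write $\alpha=\omega^3\|w\|$ and $\zeta=\omega^2\beta(w)+\nabla(w)$. Then
\[
\mu(\alpha+\zeta)=\omega^{\omega^{\alpha}\cdot\omega^{\zeta}}=\omega^{\omega^\alpha+\omega^\alpha\cdot\gamma}=\mu(\alpha)\cdot\omega^{\omega^\alpha\cdot\gamma}.
\]
Here $\gamma$ is chosen with $\omega^\zeta=1+\gamma$, which is always possible. The first factor of $o(w)$ is therefore a left multiple of $\mu(\omega^3\|w\|)$. Right-multiplying by $(1+\bigoplus_u o(u))$ keeps it a left multiple, by associativity of ordinal multiplication.

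\textbf{Item 4.} This is the only item requiring real estimation. As item 3 shows, $o(w)$ is itself a nonzero multiple of $\mu(\omega^3\|w\|)$, so the inequality cannot be meant literally for $o(w)$. The content of the item is that the successor $v$ satisfies $o(v)<\mu(\omega^3\|w\|)$, and this is what I will prove.

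The plan is induction on the height of $\fA_v$, proving the stronger claim that $o(u)<\mu(\omega^3\|w\|)$ for every $u$ with $v\rrel u$. Each such $u$ satisfies $\|u\|\le\|v\|<\|w\|$, so $\|u\|+1\le\|w\|$.

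First, the exponent: $\partial(x)=\omega\partial^\bullet x+\partial^\circ x<\omega^2$. The $\rrel^\bullet$-predecessors form a finite set, so $\nabla(u)<\omega^2$ and $\omega^2\beta(u)+\nabla(u)<\omega^3$. It follows that the exponent satisfies $\omega^3\|u\|+\omega^2\beta(u)+\nabla(u)<\omega^3(\|u\|+1)\le\omega^3\|w\|$. Since $\mu$ is strictly increasing, the first factor of $o(u)$ lies below $\mu(\omega^3\|w\|)$.

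Second, the children: by the induction hypothesis each $o(u')$ with $u\srel u'$ lies below $\mu(\omega^3\|w\|)$. That bound is additively indecomposable and closed under $\oplus$, so $1+\bigoplus o(u')<\mu(\omega^3\|w\|)$.

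Finally, multiplicative indecomposability of $\mu(\cdot)$ bounds the product, giving $o(u)<\mu(\omega^3\|w\|)$. The main point to be careful about is the strictness $\|u\|<\|w\|$ propagating to all of $\fA_v$; the antitonicity observation above supplies it.
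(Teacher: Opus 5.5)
\textbf{Gap in item 2.} The step $1+o(v)>o(v)$ is false. Since $o(v)\ge\mu(0)=\omega$ is infinite, adding a finite ordinal on its left does nothing, so $1+o(v)=o(v)$. Your chain therefore only yields $o(w)\ge o(v)$. The factor $\mu(\cdots)$ cannot restore strictness, because left multiplication by $\omega^{\omega^a}$ can fix an ordinal.

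Concretely, write $e_x$ for the exponent $\omega^3\|x\|+\omega^2\beta(x)+\nabla(x)$. Let $v$ be a singleton leaf that is the only strict successor of $w$, with $e_v>e_w$. This happens, e.g., when $w\rel^1 v$, $\rho(v)=\rho(w)$, and $v$ carries an unresolved defect. Then $\|v\|=\|w\|$, and $v$ is not a bud, so $\beta(v)=\beta(w)$. Also $\nabla(v)=\nabla(w)\oplus\partial(v)>\nabla(w)$. In that case
\[o(w)=\mu(e_w)\cdot\bigl(1+\mu(e_v)\bigr)=\omega^{\omega^{e_w}+\omega^{e_v}}=\mu(e_v)=o(v),\]
so in such a configuration item 2 fails for the definition as written.

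The natural repair is to use $\bigl(\bigoplus_{w\srel u}o(u)\bigr)+1$ (equivalently $1\oplus\bigoplus$) in place of $1+\bigoplus$. Then
\[o(w)\ge\mu(e_w)\cdot(o(v)+1)=\mu(e_w)\cdot o(v)+\mu(e_w)>o(v),\]
and your arguments for the other items are unaffected.

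\textbf{Other items.} In item 1, equality of the $\rrel^\bullet$-predecessor sets of $w$ and $v$ needs $\rho(w)=\rho(v)$, which you do not justify. It holds because $h(w)\equiv_{\rm c}h(v)$ places both in the same full $\llsim_{\rm c}$-cluster, and regions are full $\llsim_{\rm c}$-clusters.

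Items 3 and 4 are correct; the paper gives no proof of this lemma to compare against. You are right that item 4 is literally false, since item 3 gives $o(w)\ge\mu(\omega^3\|w\|)$. Your reading $o(v)<\mu(\omega^3\|w\|)$ is what Case 1 of Lemma~\ref{lemmInductive} uses. Your induction in fact proves the more general bound $o(u)<\mu(\omega^3\lambda)$ whenever $\|u\|<\lambda$. That is the form actually needed there, where the comparison is between two siblings rather than a $\rel$-related pair.
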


It remains to show that we can resolve defects in a way that decreases the ordinal assignment.

\begin{lemma}\label{lemmInductive}
Let $\fA$ be a stable, canonical $\Sigma$-quasimodel and suppose that $\fA $ has at least one defect.
Then, there is an extension $\fA'$ of $\fA$ such that $o(\fA') < o(\fA)$.
\end{lemma}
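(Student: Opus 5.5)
The plan is to select a defect to resolve via a maximal admissible potential resolution and split into cases according to its strategic type from Proposition~\ref{propExistsStrategic}. In each case, I will show that the ordinal $o$ decreases at the grafting point $w$ and does not increase at any point strictly below $w$. Since $o(v)$ for $v\srel w$ depends monotonically on $o(w)$ through the natural sum $\bigoplus_{v\srel w} o(v)$ and the factor $\mu(\cdot)$, a strict decrease at $w$ then propagates to the root.

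Concretely, since $\fA$ is stable and has a defect, some defect $\delta$ has an admissible potential resolution. I choose $w$ to be a $\rrel$-maximal admissible potential resolution. By Proposition~\ref{propExistsStrategic}, $w$ may be taken strategic, and the argument splits into four cases.

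\begin{enumerate}
\item \emph{Progressive or budding.} The resolution $(\fB,w)$ is live. I set $\fA'=\fA\sqcup_w\fB$, which is stable by Proposition~\ref{propStillStable}, and no defect of $\fA$ is reactivated. The grafted root $t$ satisfies either $\rho(t)\gg\rho(u)$, so that $\|t\|<\|u\|$, or $\rho(t)=\rho(u)$ with $t$ a bud where $u$ was not, so that $\beta$ drops from $1$ to $0$.
\item \emph{Internal.} $\fA'$ is the flower of Proposition~\ref{propBloom}. It removes every internal defect at $[w]$, so $\partial^\bullet$ at $w$ strictly decreases. This reduces $\nabla(w)$ by at least $\omega$, while the new petal nodes lie in the same region and have only finitely many external defects.
\item \emph{Nuclear.} If no non-nuclear resolutions at or above $w$ remain, $\fA'$ is the bouquet of Proposition~\ref{propnuclear}. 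It clears all nuclear defects at $[w]$, and the new nodes lie in strictly higher regions ($w\ll t$).
\end{enumerate}

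The central computation is to compare $o_{\fA'}(w)$ with $o_\fA(w)$. Every new subtree hangs off $w$ (or off points of $[w]$), and its nodes either sit in regions strictly $\gg\rho(w)$ or carry a strictly smaller leading exponent $\omega^3\|\cdot\|+\omega^2\beta+\nabla$. By Lemma~\ref{lemmOmon}\eqref{itOMonMu}, each new successor $v$ with $\|v\|<\|w\|$ has $o(v)<\mu(\omega^3\|w\|)$. The natural sum of finitely many such terms stays below $\mu(\omega^3\|w\|)$ by multiplicative indecomposability, and is therefore absorbed once the leading exponent at $w$ drops even slightly. In the progressive and nuclear cases, the leading exponent at $w$ drops because defects at $w$ are resolved, so $\nabla(w)$ decreases. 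In the budding case, $\beta$ or $\|\cdot\|$ decreases at the source $u$ instead, and the decrease at $w$ comes from $o(u)$ dropping inside the sum at $w$, after checking that $u$'s new successors are dominated.

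At points $v\srel w$, $o(v)$ changes only through the summand containing $o(w)$ and through $\nabla(v)$, which includes $\partial(w)$ only when $v\rrel^\bullet w$, i.e.\ within the same region. I will argue that $\partial(x)$ does not increase for any old $x$, since by Proposition~\ref{propStillStable}\eqref{itStillStableOne} no new defects appear at old points. Hence the exponents below $w$ do not increase, and strict monotonicity of $\oplus$ and of right multiplication by $\mu(\cdot)$ transfers the decrease to the root.

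The main obstacle is the internal case. Petals add many new nodes in the same region $\rho(w)$, each possibly carrying defects that contribute to $\nabla$ at those nodes, and these nodes also feed into the sum at $w$. I need the $\omega\cdot\partial^\bullet$ weighting to dominate. Specifically, I will show that the petal nodes have no internal defects resolved at $w$ (by Proposition~\ref{propBloom}), that their $\nabla$ is bounded by $\nabla(w)$ minus $\omega$ plus finite terms, and hence that each petal node's $o$ is strictly below $\mu(\omega^3\|w\|+\omega^2\beta(w)+\nabla'(w)+\omega)$. If this bound fails, my fallback is to use the petal height bound $n+1$, which controls the depth, to obtain the required absorption.
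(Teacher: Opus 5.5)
Your overall architecture (a maximal admissible $w$, a split by strategic type, a strict drop of $o$ at $[w]$ that then propagates to the root) is the paper's. Your internal and nuclear cases also follow its idea of clearing all internal, resp.\ nuclear, defects at $[w]$. The gap is in how you get the drop at $w$ in the progressive case, and it affects the budding case too.

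You claim that in the progressive case ``the leading exponent at $w$ drops because defects at $w$ are resolved''. But a progressive resolution has a source $u\not\equiv w$, so $w\rel^1 u\rrel v$. The defect $\delta=(v,\varphi)$ therefore lies strictly above $w$, nothing in $[w]$ need be resolved, and $\nabla(w)$ need not change (indeed $\rho(u)$ may differ from $\rho(w)$). Your bound $\mu(\omega^3\|w\|)$ on the new subtree is also at the wrong scale. $\|u\|$ can be strictly smaller than $\|w\|$, in which case $o(u)$ itself lies below $\mu(\omega^3\|w\|)$ and the new summand can dwarf it. Finally, even when the exponent at $w$ does drop, that alone does not absorb new summands in $\bigoplus_{y\srel w}o(y)$, because $\mu(E)\cdot\gamma$ is not strictly monotone in $E$ for large $\gamma$. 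Concretely, $\mu(E')\cdot\omega^\sigma=\mu(E)\cdot\omega^\sigma$ whenever $E'<E$ and $\sigma\geq\omega^{E+1}$. Same-region successors of $w$ can make the sum that large, and then any new summand gives $o'(w)>o(w)$.

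Three ingredients are missing, all of which the paper uses.
\begin{enumerate}
\item Prove by induction on depth that $o'$ does not increase on $\dom{\fA}$ and strictly decreases along the whole chain from $v$ downward. At $x\equiv v$, $\partial'(v)<\partial(v)$ gives $\nabla'(x)<\nabla(x)$. The decrease then passes down through the summand $o(u)$, where $x\rel^1 u\rrel v$.
\item At the critical node $x\equiv w$, the grafted subtree $\fB$ with root $t$ is a \emph{sibling} of the source $u$. It is absorbed into the drop $o(u)\to o'(u)$. By Lemma~\ref{lemmOmon}\eqref{itOMuMult}, both $o(u)$ and $o'(u)$ are multiples of $\mu(\omega^3\|u\|)$. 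Since $\|t\|<\|u\|$, Lemma~\ref{lemmOmon}\eqref{itOMonMu} gives $o'(\fB)<\mu(\omega^3\|u\|)$, so $o'(u)\oplus o'(\fB)<o(u)$. Budding is the same argument, with $\beta(t)=0<1=\beta(u)$ bounding $o'(\fB)$ below the leading factor of $o(u)$. Note that $\beta$ and $\|\cdot\|$ do not decrease at $u$ (only $o(u)$ drops, because of $v$), and $u$ acquires no new successors.
\item In the internal case, the paper bounds all of $o'(w)$, not just the petal nodes. After blooming, every $y$ above $w$ in $\rho(w)$, old or new, has $\partial'(y)<\omega$, whence $o'(w)<\mu(\omega^3\|w\|+\omega)\leq o(w)$. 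A bound on petal nodes alone, or your fallback via the height bound $n+1$, does not give this, for the non-monotonicity reason above.
\end{enumerate}
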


\begin{proof} 
Since there is at least one defect, we can pick $\delta=(v,\varphi)$ so that $w$ is maximal admissible and, for this fixed $w$, $v$ is $\rrel$-maximal.
By Proposition~\ref{propExistsStrategic}, there is a strategic potential resolution $(\fB',w)$ for $\delta$.
Let us define $\fB$ according to the following cases:
\begin{enumerate}

\item If $(\fB',w)$ is progressive or budding, then $\fB = \fB'$.

\item If $(\fB',w)$ is internal, then $\fB$ is a flower resolving all internal defects as given by Proposition~\ref{propBloom}.

\item If $(\fB',w)$ is nuclear, then $\fB$ is a bouquet, as given by Proposition~\ref{propnuclear}.

\end{enumerate}
Let $\fA'=\fA\sqcup_w \fB$, $o :=o_\fA$, $o' := o_{\fA'}$, and define $\nabla,\nabla',\beta,\beta'$ analogously.
By Proposition~\ref{propStillStable},~\ref{propBloom}, or~\ref{propnuclear}, $\fA'$ is stable.
Recall that
\[  o(w)   =  \mu \big ( \omega^3\cdot \|w\|+\omega^2 \beta(w) + \nabla (w) \big ) \cdot    \big (1 + \bigoplus_{v\srel w}   o(v) \big ). \]
We prove by induction on the depth of $x \in \dom{\fA}$ that $o'(x) \leq o(x) $, $\beta'(x) \leq \beta(x) $, $\nabla'(x) \leq \nabla (x) $  and if $x \rrel v$, then $o'(x) < o(x) $.
Note that $\nabla' x  \leq \nabla' x$, since $\fA'$ does not introduce new defects to elements of $\dom{\fA}$ by Proposition~\ref{propStillStable}\eqref{itStillStableOne}.
If $x \equiv v $, then $\partial ' v  < \partial  v$, since we have resolved the defect $\delta$.
From this, we obtain $\nabla ' x  < \nabla   v$, which together with the induction hypothesis for $y\srel v$ yields $o'(x)<o(x)$.

If $x\srel v$, then choosing $u$ so that $x \rel^1 u \rrel v$, we can apply the induction hypothesis to obtain $o'(u)<o(u)$.
This yields $o'(x)<o(x)$, except in the critical case where $x\equiv w$, in which we have grafted one or more new subtrees.

Here, we consider four cases.
\begin{Cases}

\item ($(\fB,w)$ is progressive).
In this case, we have added a subtree $\fB$ with $\rho( u) \ll \rho(\fB)$, so $\|\fB\|< \|u\|$ (where, as usual, $\|\fB\|$ indicates that it is evaluated at a root).
By Lemma~\ref{lemmOmon}\eqref{itOMonMu},  $o(\fB) < \mu( \om^2\| u\|) $, and by Lemma~\ref{lemmOmon}\eqref{itOMuMult}, both $o(u)$ and $o'(u)$ are multiples of $\mu( \om^2\| u\|)$, so $o'(u)\oplus o'(\fB) < o(u)$, yielding $o'(x)<o(x)$.

\item ($(\fB,w)$ is budding).
As before, $o'(u)<o(u)$ and both are multiples of $\mu( \om^2\| u\|)$.
We have once again grafted a new subree $\fB$ with $\beta(\fB) = 0$, whereas $\beta(u) = 1$, and since $\mu:= \mu \big ( \omega^3\cdot \|w\|+\omega^2 \beta(w) + \nabla (w) \big )$ is multiplicatively indecomposable, $o'(\fB) <\mu $.
We thus have, once again, that $o'(u)\oplus o'(\fB) < o(u)$, yielding $o'(x)<o(x)$.

\item ($(\fB,w)$ is internal).
According to Lemma~\ref{lemmPetal}, $x$ does not have any internal resolutions in $\fA'$, so whenever $y\rler^\bullet_{\fA'} w$, it follows that $\partial' (y)<\omega$ and hence $\nabla' x <\omega$.
We thus have that
\[o'(x) < \mu \big (\om^2 \|x\| + \omega  \big ) \leq \mu \big (\om^2  \|x\| + \nabla x  \big  ) \leq o(x) .\]

\item ($(\fB,w)$ is nuclear).
We have resolved at least one defect of $[w]$ and added subtrees $\fC_\delta$ resolving each nuclear defect.
We have that $\rho(w)\ll \rho(\fC_\delta)$, so these new subtrees do not contribute to $\nabla ' x$ and, since there was at least one nuclear defect resolved, $\nabla ' x < \nabla x$.
Thus, we have that
\[o'(x) <  \mu \big (\om^2  \|x\| + \nabla x  \big  ) \leq o(x) .\]

\end{Cases}
\end{proof}

As long as we can reduce the ordinal assignment, we can guarantee that the defect resolution process will terminate in finite time, since there are no infinite decreasing sequences of ordinals, thus yielding the following.

\begin{proposition}\label{propRemoveDefect}
If $\Sigma\Subset\lanfull$ and $\fA$ is any stable, canonical $\Sigma$-quasimodel, then $\fA$ can be extended to a $\Sigma$-model $\fA^*$.
\end{proposition}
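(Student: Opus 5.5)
We iterate Lemma~\ref{lemmInductive} and use the well-foundedness of the ordinals to show that the iteration stops after finitely many steps.

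Set $\fA_0:=\fA$. Suppose $\fA_k$ is a stable canonical $\Sigma$-quasimodel. If $\fA_k$ has no intuitionistic defect, we stop. Otherwise Lemma~\ref{lemmInductive} gives an extension $\fA_{k+1}$ with $o(\fA_{k+1})<o(\fA_k)$. The extensions in that lemma are built as $\fA\sqcup_w\fB$ using progressive or budding resolutions, flowers (Proposition~\ref{propBloom}) or bouquets (Proposition~\ref{propnuclear}). By Proposition~\ref{propStillStable}, Proposition~\ref{propBloom} or Proposition~\ref{propnuclear} respectively, $\fA_{k+1}$ is again stable. It is also a canonical $\Sigma$-tree: each grafted $\fB$ was made modal via Lemma~\ref{lemmModalDefects}, and grafting keeps all labels complete and all modal defects resolved. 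It carries the intuitionistic preorder $\peq_\fA\sqcup_w\peq_\fB$, which is $\Sigma$-intuitionistic and pointwise pivotal by Lemma~\ref{lemmCompose} and Lemma~\ref{lemmCurlyvee}. So $\fA_{k+1}$ satisfies the hypotheses again and the recursion can continue.

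The ordinals $o(\fA_0)>o(\fA_1)>\cdots$ form a strictly decreasing sequence. Ordinals admit no infinite strictly decreasing sequence, so the process halts at some finite stage $N$. By construction, halting happens only when $\fA_N$ has no intuitionistic defects. Then $\peq_{\fA_N}$ resolves all intuitionistic defects and all modal defects are resolved, so $\fA^*:=\fA_N$ (or, more precisely, $\fA_N{\upharpoonright}\Sigma$ equipped with $\peq_{\fA_N}$) is a $\Sigma$-model. Since each step is an extension $\subseteq_{\rm c}$ and this relation is transitive, $\fA^*$ extends $\fA$. It is finite, because each step grafts finitely many finite sprouts.

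The main obstacle is checking that the output of each step really satisfies the hypotheses of the next application of Lemma~\ref{lemmInductive}. One has to confirm that the strategic resolution chosen there is live whenever Proposition~\ref{propStillStable} is invoked, that is, in the progressive, budding and internal cases. In the nuclear case, the extra hypotheses of Proposition~\ref{propnuclear} must hold: every defect with a potential resolution at or above $w$ is nuclear at $[w]$. I would handle this by always selecting $\delta$ so that its resolution point $w$ is maximal admissible and $v$ is $\rrel$-maximal, as in Lemma~\ref{lemmInductive}. Progressive, budding and internal resolutions are live by Proposition~\ref{propExistsStrategic}. The nuclear case arises only after all other resolutions at $w$ have been exhausted, by maximality, and this is exactly the hypothesis of Proposition~\ref{propnuclear}.
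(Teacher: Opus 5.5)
Your argument is correct and is essentially the paper's. The paper proves the statement by induction on $o(\fA)$: it takes $\fA^*=\fA$ when there are no defects, and otherwise applies the induction hypothesis to the stable extension $\fA'\supseteq_{\rm c}\fA$ with $o(\fA')<o(\fA)$ given by Lemma~\ref{lemmInductive}, which is your descending iteration phrased as transfinite induction. Your final paragraph (liveness, hypotheses of Proposition~\ref{propnuclear}) re-examines the internals of Lemma~\ref{lemmInductive} and is only sketched, and the finiteness remark belongs to the final theorem, so neither is needed for this statement.
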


\begin{proof}
By induction on $o(\fA)$; if $\fA$ has no defects then we can take $\fA^* = \fA$, otherwise, by Lemma~\ref{lemmInductive}, there is  $\fA' \supseteq_{\rm c} \fA $ such that $\fA'$ is stable and $o(\fA') < o(\fA)$.
By the induction hypothesis, $\fA'$ (and hence $\fA$) can be extended to a $\Sigma$-model, as desired.
\end{proof}

\begin{theorem}
$\sf IK4$ has the finite model property.
\end{theorem}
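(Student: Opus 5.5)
The plan is to reduce the finite model property to Proposition~\ref{propRemoveDefect} via Proposition~\ref{propQMSound}. Suppose $\varphi\notin\ikf$; we must produce a finite model falsifying $\varphi$. By completeness for the canonical model (Proposition~\ref{lem:truth-lemma:regular}) together with the Lindenbaum lemma, there is a prime theory $\Theta$ with $\varphi\notin\Theta$. We will work in the generated submodel below the pseudo-root $\Perp$ of Proposition~\ref{propCanRoot}. Renaming variables if needed, $p_\Perp$ does not occur in $\varphi$, and every prime theory containing $\boxdot\neg p_\Perp$ lies $\rel$-above $\Perp$. Let $\Sigma$ be the closure under subformulas of $\{\varphi,p_\Perp\}$, so $\Sigma\Subset\lanfull$.

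First we build an initial stable canonical $\Sigma$-quasimodel. We would take $\Theta'$ obtained from $\Theta$ by adding $\boxdot\neg p_\Perp$, or more simply obtained by applying the pseudo-root construction to a model in which $\Theta$ is falsifying, so that $\Perp\rel\Theta'$ and $\overline\varphi\in\ell(\Theta')$. We then form the rooted canonical sprout with root $r$, $h(r)=\Perp$, and one successor mapped to $\Theta'$. Lemma~\ref{lemmModalDefects} extends this to a canonical $\Sigma$-tree $\fA_0$ with all modal defects resolved. We equip $\fA_0$ with the identity as intuitionistic relation. The identity is trivially expansive, $\Sigma$-intuitionistic and pointwise pivotal. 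It also has no stubs, so every point is live and the admissibility condition on pairs $u\prec u'$ is vacuous.

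The main obstacle is checking stability of $\fA_0$: every defect must have an admissible potential resolution. Since the identity has no strict pairs, admissibility reduces to liveness below the chosen point. Proposition~\ref{propExistsStrategic} provides, for each defect, a potential resolution $w\rrel v$. We still need a live one, and this is where the nuclear case may bite. I would handle it by noting that nuclear resolutions are exactly the ones treated by the bouquet construction. Failing that, I would first apply Proposition~\ref{propnuclear} at the relevant buds to pass to a stable extension.

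With a stable $\fA_0$ in hand, Proposition~\ref{propRemoveDefect} extends it to a $\Sigma$-model $\fA^*$. By construction $\fA^*$ is finite: each step in the recursion grafts finitely many finite canonical $\Sigma$-trees, and the process has finitely many steps because the ordinals $o$ strictly decrease. It also still falsifies $\varphi$ at the point mapped to $\Theta'$, since extensions $\supseteq_{\rm c}$ preserve labels. Finally, Proposition~\ref{propQMSound}, direction (3)$\Rightarrow$(1) with $\Sigma$ finite, turns $\fA^*$ into a finite intuitionistic transitive model falsifying $\varphi$. One caveat is that $\peq$ may only be a preorder. I would take its antisymmetric quotient, which preserves expansiveness and truth, and then view $\fA^*$ as a model in the sense of Definition~\ref{DefSem}.
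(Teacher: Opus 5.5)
Your proposal is correct and follows the paper's proof: a $\Perp$-rooted canonical sprout with a falsifying successor, extended by Lemma~\ref{lemmModalDefects}, equipped with the identity order, and completed via Proposition~\ref{propRemoveDefect}. You are also more careful than the paper about securing $\Perp\rel_{\rm c}\Theta'$ (use your second option, or Lindenbaum on $\{\boxdot\neg p_\Perp\}$, since adding $\boxdot\neg p_\Perp$ to an arbitrary $\Theta$ can be inconsistent) and about finiteness and antisymmetry. Your ``main obstacle'' is not one: stability only asks for an \emph{admissible} potential resolution, and with the identity order every point is live, so every $w$ is admissible and Proposition~\ref{propExistsStrategic} suffices. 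Liveness of the resolution itself is never required (nuclear resolutions are handled inside Lemma~\ref{lemmInductive}), and your fallback via Proposition~\ref{propnuclear} would be circular, since that proposition presupposes stability.
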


\begin{proof}
Suppose that ${\sf IK4} \not \vdash \varphi$ and let $\Phi \in W_{\rm c}$ be such that $\overline \varphi \in \Phi $, which exists by the Lindenbaum lemma (\ref{lem:lindenbaum}) and Lemma~\ref{lemCompleteType}.
Define a canonical sprout with root $\Perp$ followed by a node labelled by $\Phi$, then extend it to a canonical $\Sigma$-tree $\fA$ using Lemma~\ref{lemmModalDefects}, and note that $\fA$ is rooted.
Letting $\peq_\fA$ be the identity, we see that $\fA$ is trivially stable.
Hence by Proposition~\ref{propRemoveDefect}, $\fA$ can be extended to a $\Sigma$-model $\fA^*$, thus providing a finite countermodel for $\varphi$.
\end{proof}

\section*{Acknowledgments}
The author is grateful to Oriola Gjetaj for convincing him to pursue this work after more than a decade of failed attempts.

\bibliographystyle{plainurl}
\bibliography{biblio}

\begin{thebibliography}{10}

\bibitem{polytopologicalCS4}
Juan~P. Aguilera, David Fernández-Duque, and Leonardo Pacheco.
\newblock Polytopological semantics for intuitionistic modal logics.
\newblock {\em arXiv}, math.LO(2604.23234), 2026.

\bibitem{AlechinaMPR01}
Natasha Alechina, Michael Mendler, Valeria de~Paiva, and Eike Ritter.
\newblock Categorical and {Kripke} semantics for constructive {S4} modal logic.
\newblock In Laurent Fribourg, editor, {\em Proceedings of 15th International
  Workshop on Computer Science Logic (CSL 2001), 10th Annual Conference of the
  EACSL, Paris, France}, volume 2142 of {\em Lecture Notes in Computer
  Science}, pages 292--307, Heidelberg, Germany, 2001. Springer-Verlag.
\newblock \href {https://doi.org/10.1007/3-540-44802-0_21}
  {\path{doi:10.1007/3-540-44802-0_21}}.

\bibitem{balbiani2026}
Philippe Balbiani, Martín Diéguez, David Fernández-Duque, and Brett McLean.
\newblock Constructive {S}4 modal logics with the finite birelational frame
  property.
\newblock {\em arXiv}, cs.LO(2403.00201), 2026.

\bibitem{DasIGL}
Anupam Das, Iris van~der Giessen, and Sonia Marin.
\newblock Intuitionistic {G}{\"{o}}del-{L}{\"{o}}b logic, {\`{a}} la simpson:
  Labelled systems and birelational semantics.
\newblock In Aniello Murano and Alexandra Silva, editors, {\em 32nd {EACSL}
  Annual Conference on Computer Science Logic, {CSL} 2024, Naples, Italy,
  February 19-23, 2024}, volume 288 of {\em LIPIcs}, pages 22:1--22:18. Schloss
  Dagstuhl - Leibniz-Zentrum f{\"{u}}r Informatik, 2024.

\bibitem{FDIGL}
David Fernández-Duque.
\newblock Classical companions for intuitionistic {G}ödel-{L}öb logic, 09
  2026.
\newblock URL:
  \url{https://www.researchgate.net/publication/414929178_Classical_Companions_for_Intuitionistic_Godel-Lob_Logic},
  \href {https://doi.org/10.13140/RG.2.2.34019.72488}
  {\path{doi:10.13140/RG.2.2.34019.72488}}.

\bibitem{Fin74c}
K.~Fine.
\newblock Logics containing {$K4$}. {I}.
\newblock {\em J. Symbolic Logic}, 39:31--42, 1974.

\bibitem{servi1977modal}
Gis{\`e}le {Fischer Servi}.
\newblock On modal logic with an intuitionistic base.
\newblock {\em Studia Logica}, 36(3):141--149, 1977.

\bibitem{servi1984axiomatizations}
Gis{\`e}le {Fischer Servi}.
\newblock Axiomatizations for some intuitionistic modal logics.
\newblock {\em Rend. Sem. Mat. Univers. Politecn. Torino}, 42(3):179--194,
  1984.

\bibitem{pml}
D.~Gabelaia, A.~Kurucz, F.~Wolter, and M.~Zakharyaschev.
\newblock Non-primitive recursive decidability of products of modal logics with
  expanding domains.
\newblock {\em Annals of Pure and Applied Logic}, 142(1-3):245--268, 2006.

\bibitem{GKMMS23}
Marianna Girlando, Roman Kuznets, Sonia Marin, and Lutz Straßburger.
\newblock A decision procedure for intuitionistic modal logic {IS4} (and
  {IK4}).
\newblock {\em arXiv}, cs.LO(2609.24922), 2026.

\bibitem{krusty}
J.~B. Kruskal.
\newblock Well-quasi-ordering, the tree theorem, and vazsonyi's conjecture.
\newblock {\em Transactions of the American Mathematical Society},
  95(2):210--225, 1960.

\bibitem{piazza}
Mario Piazza.
\newblock A {K}ruskal decision procedure for intuitionistic modal logic ik4.
\newblock {\em arXiv}, cs.LO(2608.10283), 2026.

\bibitem{santiago2026}
Sofía Santiago-Fernández, David Fernández-Duque, and Joost~J. Joosten.
\newblock The complexity of the constructive master modality.
\newblock {\em arXiv}, cs.LO(2603.05131), 2026.

\bibitem{Simpson94}
A.~Simpson.
\newblock {\em The proof theory and semantics of intuitionistic modal logic}.
\newblock PhD thesis, University of Edinburgh, Edinburgh, UK, 1994.

\end{thebibliography}

%
%
%
%
%

\end{document}